\numberwithin{equation}{section}
\theoremstyle{plain}
\newtheorem{theorem}[equation]{Theorem}
\newtheorem{proposition}[equation]{Proposition}
\newtheorem{lemma}[equation]{Lemma} 
\newtheorem{corollary}[equation]{Corollary}
\theoremstyle{definition}
\newtheorem{example}[equation]{Example}
\theoremstyle{remark} 
\newtheorem{remark}[equation]{Remark}
\newcommand{\cl}{\operatorname{cl}}
\newcommand{\colim}{\operatorname{colim}}
\renewcommand{\dim}{\operatorname{dim}}
\newcommand{\End}{\operatorname{End}}
\newcommand{\Ext}{\operatorname{Ext}}
\newcommand{\fHom}{\operatorname{\mathcal{H}\!\!\;\mathit{om}}}
\newcommand{\GrMod}{\operatorname{\mathsf{GrMod}}}
\newcommand{\hocolim}{\operatorname{hocolim}}
\newcommand{\Hom}{\operatorname{Hom}}
\newcommand{\id}{\operatorname{id}}
\newcommand{\Id}{\operatorname{Id}}
\renewcommand{\Im}{\operatorname{Im}}
\newcommand{\Inj}{\operatorname{\mathsf{Inj}}}
\newcommand{\Ker}{\operatorname{Ker}}
\newcommand{\KInj}[1]{\mathsf K(\Inj #1)}
\newcommand{\Loc}{\operatorname{Loc}}
\renewcommand{\min}{\operatorname{min}}
\newcommand{\Spec}{\operatorname{Spec}}
\newcommand{\StMod}{\operatorname{\mathsf{StMod}}}
\newcommand{\supp}{\operatorname{supp}}
\newcommand{\Supp}{\operatorname{Supp}}
\newcommand{\Thick}{\operatorname{Thick}}
\newcommand{\tot}{\operatorname{tot}}
\newcommand{\hh}[1]{H^{*}(#1)} 
\newcommand{\kos}[2]{{#1}/\!\!/{#2}}
\newcommand{\lto}{\longrightarrow}
\newcommand{\xra}{\xrightarrow}
\newcommand{\col}{\colon}
\newcommand{\ges}{{\scriptscriptstyle\geqslant}}
\newcommand{\lotimes}{\otimes^{\mathbf L}}
\newcommand{\bik}{Benson/Iyengar/Krause}
\def\mcU{\mathcal{U}} 
\def\mcV{\mathcal{V}}
\def\mcW{\mathcal{W}} 
\def\mcZ{\mathcal{Z}}
\def\sfc{\mathsf c}
\def\sfC{\mathsf C}
\def\sfD{\mathsf D} 
\def\sfG{\mathsf G}
\def\sfS{\mathsf S} 
\def\sfT{\mathsf T} 
\def\sfU{\mathsf U}
\def\bbZ{\mathbb Z} 
\def\one{\mathds 1}
\newcommand{\bsa}{\boldsymbol{a}}
\newcommand{\fa}{\mathfrak{a}} 
\newcommand{\fm}{\mathfrak{m}} 
\newcommand{\fp}{\mathfrak{p}}
\newcommand{\fq}{\mathfrak{q}}
\newcommand{\vf}{\varphi}
\newcommand{\gam}{\varGamma}
\def\Si{\Sigma}
\def\dd{\partial} 
\newcommand{\bloc}{{L}}
\title[Stratifying triangulated categories]{Stratifying triangulated categories}
\thanks{The research of the first and second authors was undertaken
during visits to the University of Paderborn, each supported by a
research prize from the Humboldt Foundation. The research of the
second author was partly supported by NSF grants, DMS 0602498 and DMS
0903493.}
\keywords{localizing subcategory, thick subcategory, triangulated category, support, local cohomology} 
\subjclass[2010]{18G99 (primary); 13D45, 18E30, 20J06, 55P42}
\author{Dave Benson} 
\address{Dave Benson \\ 
Institute of Mathematics\\ 
University of Aberdeen\\ 
King's College\\ 
Aberdeen AB24 3UE\\ 
Scotland U.K.}
\author{Srikanth B. Iyengar} 
\address{Srikanth B. Iyengar\\ 
Department of Mathematics\\ 
University of Nebraska\\ 
Lincoln, NE 68588\\ 
U.S.A.}
\author{Henning Krause} 
\address{Henning Krause\\ 
Institut f\"ur Mathematik\\ 
Universit\"at Paderborn\\ 
33095 Paderborn\\ 
Germany.}
\curraddr{
Fakult\"at f\"ur Mathematik\\ 
Universit\"at Bielefeld\\ 
33501 Bielefeld\\ 
Germany}
\begin{document}

\begin{abstract}
A notion of stratification is introduced for any compactly generated
triangulated category $\mathsf T$ endowed with an action of a graded
commutative noetherian ring $R$. The utility of this notion is
demonstrated by establishing diverse consequences which follow when
$\mathsf T$ is stratified by $R$. Among them are a classification of the
localizing subcategories of $\mathsf T$ in terms of subsets of the set of
prime ideals in $R$; a classification of the thick subcategories of
the subcategory of compact objects in $\mathsf T$; and results concerning
the support of the graded $R$-module of morphisms $\mathrm{Hom}_{\mathsf T}^{*}(C,D)$
leading to analogues of the tensor product theorem for support
varieties of modular representation of groups.
 \end{abstract}

\maketitle 
\setcounter{tocdepth}{1} 
\tableofcontents

\section{Introduction}
Over the last few decades, the theory of support varieties has played an increasingly important role in various aspects of representation theory. The original context was Carlson's support varieties for modular representations of finite groups \cite{Carlson:1983a}, but the method soon spread to restricted Lie algebras
\cite{Friedlander/Parshall:1986a}, complete intersections in commutative algebra \cite{Avramov:1989a,Avramov/Buchweitz:2000a}, Hochschild cohomological support for certain finite dimensional algebras \cite{Erdmann/Holloway/Snashall/Solberg/Taillefer:2004a}, and finite group schemes \cite{Friedlander/Pevtsova:2005a,Friedlander/Pevtsova:2007a}.

One of the themes in this development has been the classification of thick or localizing subcategories of various triangulated categories of representations. This story started with Hopkins' classification \cite{Hopkins:1987a} of thick subcategories of the perfect complexes over a commutative Noetherian ring $R$, followed by Neeman's classification \cite{Neeman:1992a} of localizing subcategories of the full derived category of $R$; both involved a notion of support for complexes living in the prime ideal spectrum of $R$. Somewhat later
came the classification by Benson, Carlson and Rickard \cite{Benson/Carlson/Rickard:1997a} of the thick subcategories of the stable module category of finite dimensional representation of a finite group $G$ in terms of the spectrum of its cohomology ring.

In~\cite{\bik:2008b} we established an analogous classification theorem for the localizing subcategories of the stable module category of all representations of $G$. The strategy of proof is a series of reductions and involves a passage through various other triangulated categories admitting a tensor structure. To execute this strategy, it was important to isolate a property which would permit one to classify localizing subcategories in tensor triangulated categories, and could be tracked easily under changes of categories. This is the notion of \emph{stratification} introduced in \cite{\bik:2008b} for tensor triangulated categories, inspired by work of Hovey, Palmieri, and Strickland~\cite{Hovey/Palmieri/Strickland:1997a}. For the stable module category of $G$, this condition yields a parameterization of localizing subcategories  reminiscent of, and enhancing, Quillen stratification~\cite{Quillen:1971b} of the cohomology algebra of $G$, whence the name.

In this work we present a notion of stratification for any compactly generated triangulated category $\sfT$, and establish a number of consequences which follow when this property holds for $\sfT$. The context is that we are given an \emph{action} of a graded commutative ring $R$ on $\sfT$, namely a map from $R$ to the graded center of $\sfT$. We write $\Spec R$ for the set of homogeneous prime ideals of $R$. In \cite{\bik:2008a} we developed a theory of support for objects in $\sfT$, based on a construction of exact functors $\gam_\fp\col\sfT\to\sfT$ for each $\fp\in\Spec R$, which are analogous to local cohomology functors from commutative algebra. The support of any object $X$ of $\sfT$ is the set
\[ 
\supp_R X=\{\fp\in\Spec R\mid\gam_\fp X \ne 0\}.
\]
In this paper, we investigate in detail what is needed in order to classify localizing subcategories in this general context, in terms of the set $\Spec R$.

We separate out two essential ingredients of the process of classifying localizing subcategories. The first is the \emph{local-global principle}: it states that for each object $X$ of $\sfT$, the localizing subcategory generated by $X$ is the same as the localizing subcategory generated by the set of objects $\{\gam_\fp X\mid \fp\in \Spec R\}$. We prove that $\sfT$ has this property when, for example, the dimension of $\Spec R$ is finite.

When the local-global principle holds for $\sfT$ the problem of classifying localizing subcategories of $\sfT$ can be tackled one prime at a time. This is the content of the following result, which is part of Proposition~\ref{prop:classification}.

\begin{theorem}
\label{ithm:classification}
When the local-global principle holds for $\sfT$ there is a one-to-one correspondence between localizing subcategories of $\sfT$ and functions assigning to each $\fp\in\Spec R$ a localizing subcategory of $\gam_\fp\sfT$. The function corresponding to a localizing subcategory $\sfS$ sends $\fp$ to $\sfS\cap \gam_\fp\sfT$.
\end{theorem}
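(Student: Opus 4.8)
The plan is to exhibit mutually inverse assignments between the two sets. Send a localizing subcategory $\sfS\subseteq\sfT$ to the function $\Phi(\sfS)\col\fp\mapsto\sfS\cap\gam_\fp\sfT$; since $\gam_\fp\sfT$ is a localizing subcategory of $\sfT$ and an intersection of localizing subcategories is localizing, $\Phi(\sfS)(\fp)$ is a localizing subcategory of $\gam_\fp\sfT$ (coproducts in $\gam_\fp\sfT$ agreeing with those formed in $\sfT$). Conversely, send a function $f$ with each $f(\fp)$ localizing in $\gam_\fp\sfT$ to $\Psi(f):=\Loc(\mathcal C)$ where $\mathcal C:=\bigcup_{\fp\in\Spec R}f(\fp)$; this is meaningful because each $f(\fp)$, being a triangulated subcategory of $\sfT$ closed under coproducts, consists of objects of $\sfT$. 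It then remains to verify $\Psi\Phi=\id$ and $\Phi\Psi=\id$.

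For $\Psi\Phi=\id$, fix a localizing subcategory $\sfS$. Since each $\sfS\cap\gam_\fp\sfT$ lies in $\sfS$ and $\sfS$ is localizing, $\Psi(\Phi(\sfS))\subseteq\sfS$. For the reverse inclusion take $X\in\sfS$. The local-global principle gives $X\in\Loc(\gam_\fp X\mid\fp\in\Spec R)$, and for each $\fp$ one has $\gam_\fp X\in\Loc(X)\subseteq\sfS$ (a basic property of these functors) together with $\gam_\fp X\in\gam_\fp\sfT$; hence $\gam_\fp X\in\sfS\cap\gam_\fp\sfT\subseteq\Psi(\Phi(\sfS))$. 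As $\Psi(\Phi(\sfS))$ is localizing and contains every $\gam_\fp X$, it contains $\Loc(\gam_\fp X\mid\fp)$ and therefore $X$. Thus $\sfS=\Psi(\Phi(\sfS))$.

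For $\Phi\Psi=\id$, fix $f$ and set $\sfS:=\Psi(f)=\Loc(\mathcal C)$. For a prime $\fq$ one has $f(\fq)\subseteq\sfS\cap\gam_\fq\sfT=\Phi(\sfS)(\fq)$, since $f(\fq)\subseteq\mathcal C\subseteq\sfS$ and $f(\fq)\subseteq\gam_\fq\sfT$ by hypothesis. For the reverse inclusion let $X\in\sfS\cap\gam_\fq\sfT$. Because $\gam_\fq$ is exact and preserves coproducts, $\{Y\in\sfT\mid\gam_\fq Y\in\Loc(\gam_\fq\mathcal C)\}$ is a localizing subcategory of $\sfT$ containing $\mathcal C$, hence containing $\sfS$; so $\gam_\fq X\in\Loc(\gam_\fq\mathcal C)$, where $\gam_\fq\mathcal C:=\{\gam_\fq Y\mid Y\in\mathcal C\}$. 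If $\fp\neq\fq$ and $Y\in f(\fp)\subseteq\gam_\fp\sfT$, then $\supp_R Y\subseteq\{\fp\}$, so $\fq\notin\supp_R Y$ and $\gam_\fq Y=0$; if $Y\in f(\fq)\subseteq\gam_\fq\sfT$, then $\gam_\fq Y\cong Y\in f(\fq)$ by idempotence of $\gam_\fq$. Hence $\gam_\fq\mathcal C$ lies in $f(\fq)$ up to zero objects, so $\Loc(\gam_\fq\mathcal C)\subseteq f(\fq)$ and $\gam_\fq X\in f(\fq)$. Finally $X\in\gam_\fq\sfT$ gives $X\cong\gam_\fq X$, whence $X\in f(\fq)$.

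The only genuinely non-formal input is the local-global principle, which we are assuming; granting it, the argument is bookkeeping with the functors $\gam_\fp$. The step requiring the most care is the reverse inclusion in $\Phi\Psi=\id$: it rests on the mutual orthogonality $\gam_\fq\gam_\fp\cong0$ for $\fp\neq\fq$ (equivalently, $\supp_R$ of an object of $\gam_\fp\sfT$ is contained in $\{\fp\}$), on the idempotence $\gam_\fq\gam_\fq\cong\gam_\fq$, and on $\gam_\fq$ preserving coproducts — all properties of these functors established in the companion paper, which should be cited precisely at this point.
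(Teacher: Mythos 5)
Your argument is correct and follows essentially the same route as the paper's proof of Proposition~\ref{prop:classification}: the same pair of maps, with $\Phi\Psi=\id$ established unconditionally from the orthogonality $\gam_\fq\gam_\fp=0$ for $\fp\neq\fq$ together with the idempotence and coproduct-preservation of $\gam_\fq$, and $\Psi\Phi=\id$ deduced from the local-global principle. The only cosmetic point is that $\gam_\fp X\in\Loc_\sfT(X)$ is not an unconditional ``basic property'' of these functors, but it is immediate here, being one half of the assumed equality $\Loc_\sfT(X)=\Loc_\sfT(\{\gam_\fp X\mid\fp\in\Spec R\})$.
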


The second ingredient is that in good situations the subcategory
$\gam_{\fp}\sfT$, which consists of objects supported at $\fp$, is
either zero or contains no proper localizing subcategories. If this
property holds for each $\fp$ and the local-global principle holds,
then we say $\sfT$ is \emph{stratified} by $R$. In this case, the map
in Theorem~\ref{ithm:classification} gives a one-to-one correspondence
between localizing subcategories of $\sfT$ and subsets of
$\supp_R\sfT$, which is the set of primes $\fp$ such that
$\gam_\fp\sfT\ne 0$; see Theorem~\ref{thm:classifying}.

We draw a number of further consequences of stratification. The best
statements are available when $\sfT$, in addition to be being
stratified by $R$, is \emph{noetherian}, meaning that the $R$-module
$\End^{*}_{\sfT}(C)$ is finitely generated for each compact object $C$
in $\sfT$.

\begin{theorem}
\label{ithm:classify-thick}
If $\sfT$ is noetherian and stratified by $R$, then the map described
in Theorem \emph{\ref{ithm:classification}} gives a one-to-one
correspondence between the thick subcategories of the compact objects
in $\sfT$ and the specialization closed subsets of $\supp_R\sfT$.
\end{theorem}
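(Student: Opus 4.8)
The plan is to deduce the statement by combining two correspondences. The first is furnished by stratification: by Theorem~\ref{thm:classifying}, the maps $\sfS\mapsto\supp_{R}\sfS$ and $\mcV\mapsto\sfT_{\mcV}:=\{X\in\sfT\mid\supp_{R}X\subseteq\mcV\}$ are mutually inverse bijections between the localizing subcategories of $\sfT$ and the subsets of $\supp_{R}\sfT$. The second is a theorem of Neeman: $\sfC\mapsto\Loc(\sfC)$ is a bijection between the thick subcategories of $\sfT^{\mathsf c}$ and those localizing subcategories of $\sfT$ that are generated by a set of compact objects, with inverse $\sfS\mapsto\sfS\cap\sfT^{\mathsf c}$. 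Using in addition the fact from \cite{\bik:2008a} that the support of a localizing subcategory generated by a set of objects equals the union of the supports of those objects, the composite of these two bijections carries a thick subcategory $\sfC$ to $\supp_{R}\Loc(\sfC)=\bigcup_{C\in\sfC}\supp_{R}C$ --- which is precisely the map induced by Theorem~\ref{ithm:classification}. So the theorem reduces to the assertion: for a subset $\mcV\subseteq\supp_{R}\sfT$, the localizing subcategory $\sfT_{\mcV}$ is generated by compact objects if and only if $\mcV$ is specialization closed.

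\textbf{Necessity.} Suppose $\sfT_{\mcV}=\Loc(\sfC)$ for a thick subcategory $\sfC\subseteq\sfT^{\mathsf c}$. Then $\mcV=\supp_{R}\sfT_{\mcV}=\bigcup_{C\in\sfC}\supp_{R}C$. Since $\sfT$ is noetherian, $\supp_{R}C$ is closed in $\Spec R$ for every compact object $C$ --- it is the zero set of the annihilator of the finitely generated $R$-module $\End^{*}_{\sfT}(C)$ --- so $\mcV$ is a union of closed sets, hence specialization closed.

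\textbf{Sufficiency.} Let $\mcV$ be specialization closed. As $\sfT_{\mcV}\cap\sfT^{\mathsf c}\subseteq\sfT_{\mcV}$ and $\sfT_{\mcV}$ is localizing, one has $\supp_{R}\Loc(\sfT_{\mcV}\cap\sfT^{\mathsf c})\subseteq\mcV$, and by the injectivity of $\sfS\mapsto\supp_{R}\sfS$ it is enough to prove the reverse inclusion, for then $\Loc(\sfT_{\mcV}\cap\sfT^{\mathsf c})=\sfT_{\mcV}$ and so $\sfT_{\mcV}$ is generated by compact objects. Fix $\fp\in\mcV$. Since $\supp_{R}\sfT=\bigcup_{C\in\sfT^{\mathsf c}}\supp_{R}C$, there is a compact object $C_{0}$ with $\fp\in\supp_{R}C_{0}$. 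As $R$ is noetherian, choose homogeneous elements $r_{1},\dots,r_{n}$ generating $\fp$ and form the iterated Koszul object $C:=\kos{C_{0}}{r_{1},\dots,r_{n}}$, which is compact, being obtained from $C_{0}$ by finitely many mapping cones. By the Koszul support formula of \cite{\bik:2008a}, $\supp_{R}C=\supp_{R}C_{0}\cap V(r_{1})\cap\dots\cap V(r_{n})=\supp_{R}C_{0}\cap V(\fp)$; this set contains $\fp$ (because $\fp\in\supp_{R}C_{0}$ and $\fp\in V(\fp)$) and is contained in $V(\fp)\subseteq\mcV$, the last inclusion since $\mcV$ is specialization closed. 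Hence $C\in\sfT_{\mcV}\cap\sfT^{\mathsf c}$ and $\fp\in\supp_{R}C\subseteq\supp_{R}\Loc(\sfT_{\mcV}\cap\sfT^{\mathsf c})$. As $\fp\in\mcV$ was arbitrary, $\mcV\subseteq\supp_{R}\Loc(\sfT_{\mcV}\cap\sfT^{\mathsf c})$, as required.

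The sufficiency direction is the crux of the argument: realizing every specialization closed subset of $\supp_{R}\sfT$ by a localizing subcategory that is generated by compact objects. It relies on the behaviour of Koszul objects over a noetherian ring --- that compactness is preserved on compact objects and that support is cut down by the corresponding closed sets --- as developed in \cite{\bik:2008a}; the noetherian hypothesis also enters the necessity direction, where it makes the support of a compact object closed rather than merely specialization closed. The two framing bijections --- Neeman's thick/localizing correspondence and the stratification bijection of Theorem~\ref{thm:classifying} --- can be quoted, and the identification of the resulting map with that of Theorem~\ref{ithm:classification} is formal, since under stratification $\sfS\cap\gam_{\fp}\sfT\ne0$ exactly when $\fp\in\supp_{R}\sfS$.
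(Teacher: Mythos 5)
Your proof is correct, but it follows the route that the paper explicitly mentions and then sets aside: immediately before Theorem~\ref{thm:classifying-thick} the authors remark that the result can be deduced from the classification of localizing subcategories, Theorem~\ref{thm:classifying}, as in \cite[\S3]{Neeman:1992a}, and that they will give a different proof. Your argument is exactly that deduction: you compose Neeman's bijection between thick subcategories of $\sfT^{\sfc}$ and compactly generated localizing subcategories with the stratification bijection of Theorem~\ref{thm:classifying}, and then identify the compactly generated localizing subcategories as those of the form $\supp_R^{-1}\mcV$ with $\mcV$ specialization closed --- necessity from the closedness of $\supp_{R}C$ for compact $C$ (the noetherian hypothesis), sufficiency from Koszul objects and Lemma~\ref{lem:kos-props}, much as in the paper's own surjectivity argument. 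Where the two proofs genuinely diverge is injectivity: the paper does not route this through Theorem~\ref{thm:classifying}, but instead, for a compact $D$ with $\supp_{R}D\subseteq\supp_{R}\sfC$, uses the orthogonality statement Theorem~\ref{thm:intersection} to show $LD=0$ for the localization functor $L$ with $\Ker L=\Loc_{\sfT}(\sfC)$, finishing with \cite[Lemma~2.2]{Neeman:1992a} --- the same lemma that underlies the Neeman bijection you quote, so both proofs ultimately rest on it. What the paper's route buys is the finer bookkeeping recorded after its statement: injectivity needs only the minimality condition (S2) and surjectivity only the noetherian hypothesis, whereas your argument passes both directions through the full classification of localizing subcategories and hence uses stratification in its entirety; in exchange, yours is the more formal and transportable deduction. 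One notational caution: you write $\sfT_{\mcV}$ for $\{X\in\sfT\mid\supp_{R}X\subseteq\mcV\}$ with $\mcV$ an arbitrary subset, while the paper reserves $\sfT_{\mcV}$ for the subcategory of $\mcV$-torsion objects with $\mcV$ specialization closed; the two agree in that case, but the clash is worth avoiding.
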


This result is a rewording of Theorem~\ref{thm:classifying-thick} and
can be deduced from the classification of localizing subcategories of
$\sfT$, using an argument due to Neeman~\cite{Neeman:1992a}. We give a
different proof based on the following result, which is
Theorem~\ref{thm:intersection}.

 \begin{theorem}
\label{ithm:intersection}
If $\sfT$ is noetherian and stratified by $R$, then for each pair of
compact objects $C,D$ in $\sfT$ there is an equality
\[
\supp_{R}\Hom^{*}_{\sfT}(C,D) = \supp_{R}C\cap \supp_{R}D\,.
\]
When in addition $R^{i}=0$ holds for $i<0$, one has
$\Hom^{n}_{\sfT}(C,D)=0$ for $n\gg 0$ if and only if
$\Hom^{n}_{\sfT}(D,C)=0$ for $n\gg 0$.
\end{theorem}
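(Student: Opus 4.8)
The plan is to prove the two assertions in turn, and in both cases to reduce questions about $\Hom^*_{\sfT}(C,D)$ to questions about supports by exploiting the hypotheses that $\sfT$ is noetherian and stratified by $R$. For the first equality, I would begin with the general fact (available from the theory of local cohomology functors developed in \cite{\bik:2008a}) that for any objects $X,Y$ one has $\supp_R\Hom^*_{\sfT}(X,Y)\subseteq \supp_R X\cap\supp_R Y$, coming from the observation that $\Hom^*_{\sfT}(X,Y)$ is built from $\Hom^*_{\sfT}$ of the pieces $\gam_\fp X$ and $\gam_\fp Y$, together with the fact that the $R$-action on $\Hom^*_{\sfT}(X,Y)$ factors through endomorphisms of source and target. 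The substantive direction is the reverse containment. Fix $\fp\in\supp_R C\cap\supp_R D$. The goal is to show $\gam_\fp\Hom^*_{\sfT}(C,D)\ne 0$, equivalently (using noetherianness, which makes $\Hom^*_{\sfT}(C,D)$ a finitely generated $R$-module so that its support is closed and detected by localization) that $\Hom^*_{\sfT}(C,D)_\fp\ne 0$ after passing to the degree-$\fp$ part. Here I would invoke stratification: since $\gam_\fp\sfT$ has no proper nonzero localizing subcategories and $\gam_\fp C,\gam_\fp D$ are both nonzero objects of it, each generates all of $\gam_\fp\sfT$, so in particular $\gam_\fp D$ lies in the localizing subcategory generated by $\gam_\fp C$. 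A standard argument then shows $\Hom^*_{\sfT}(\gam_\fp C,\gam_\fp D)\ne 0$ — if it vanished, $\gam_\fp D$ would lie in the localizing subcategory of objects $Y$ with $\Hom^*_{\sfT}(\gam_\fp C,Y)=0$, forcing $\gam_\fp C$ itself into that subcategory and hence $\gam_\fp C=0$, a contradiction. Finally I would relate $\Hom^*_{\sfT}(\gam_\fp C,\gam_\fp D)$ back to the localization $\Hom^*_{\sfT}(C,D)_\fp$, using adjunction properties of $\gam_\fp$ and the finite generation hypothesis; this identifies the nonvanishing just obtained with $\fp\in\supp_R\Hom^*_{\sfT}(C,D)$.

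For the second assertion, assume additionally $R^i=0$ for $i<0$, so that $R$ sits in non-negative degrees and $\fm=R^{\ges 1}$ together with the prime ideals it contains behave like a graded maximal ideal. The key translation is that for a finitely generated graded $R$-module $M$, the condition $M^n=0$ for $n\gg 0$ is equivalent to $\supp_R M$ being contained in the set of primes $\fp$ with $R^{\ges 1}\subseteq\fp$ — more precisely, $M$ is bounded above if and only if $M_\fp$ is bounded above for every $\fp$, which under the positive-grading hypothesis reduces to a statement about which primes appear in $\supp_R M$. Applying this to $M=\Hom^*_{\sfT}(C,D)$ and to $M'=\Hom^*_{\sfT}(D,C)$, and using the equality already proved, we get that $\Hom^n_{\sfT}(C,D)=0$ for $n\gg 0$ iff the symmetric condition $\supp_R C\cap\supp_R D\subseteq V$ holds for the relevant closed set $V$, and likewise for $\Hom^*_{\sfT}(D,C)$ with the same intersection $\supp_R C\cap\supp_R D$; since the right-hand sides coincide, so do the left-hand conditions. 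The symmetry is thus a formal consequence of the symmetry of $\supp_R C\cap\supp_R D$ in $C$ and $D$.

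The main obstacle I anticipate is the identification in the first part of $\Hom^*_{\sfT}(\gam_\fp C,\gam_\fp D)$ with (a suitable degreewise piece of) the localization $\Hom^*_{\sfT}(C,D)_\fp$, and more generally pinning down the precise compatibility of the functors $\gam_\fp$ with the $R$-module structure on graded Hom. One has to be careful that $\gam_\fp$ is a composite of a localization and a colocalization (or a torsion functor), and to track how $\Hom$ interacts with each; the finite generation of $\End^*_{\sfT}(C)$ and hence of $\Hom^*_{\sfT}(C,D)$ over $R$ is what rescues this, letting one commute localization past the Hom and detect support primewise. A secondary technical point is making the reduction in the second assertion clean: one needs that under the hypothesis $R^{\ges 0}=R$ a finitely generated graded module is bounded above precisely when its support avoids all primes not containing $R^{\ges 1}$, which should follow from graded Nakayama-type reasoning but must be stated carefully for graded-commutative (as opposed to strictly commutative) $R$.
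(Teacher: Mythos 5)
Your proposal is correct and follows the paper's argument in all essentials: the easy inclusion comes from the factorization of the $R$-action on $\Hom^*_{\sfT}(C,D)$ through $\End^*_{\sfT}(C)$ and $\End^*_{\sfT}(D)$; the reverse inclusion comes from minimality of $\gam_\fp\sfT$ together with the chain of identifications relating $\Hom^*_{\sfT}(C,D)_\fp$ to $\Hom^*_{\sfT}(\gam_\fp C,\gam_\fp D)$ (compactness of $C$ gives $\Hom^*_{\sfT}(C,D)_\fp\cong\Hom^*_{\sfT}(C,D_\fp)$, then one uses $\gam_\fp D\in\Loc_{\sfT}(D_\fp)$, the $\fp$-locality of $\gam_\fp D$, and $\gam_\fp C\in\Loc_{\sfT}(C_\fp)$); and the second assertion is exactly the paper's reduction to the criterion that a finitely generated graded module over a non-negatively graded $R$ is bounded above iff its support lies in $\mcV(R^{\ges 1})$. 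The only organizational difference is that the paper first proves the big-support identity $\Supp_R\Hom^*_{\sfT}(C,Y)=\Supp_R C\cap\Supp_R Y$ for $C$ compact and $Y$ arbitrary, with no noetherian hypothesis, and invokes noetherianness only afterwards to identify $\Supp_R$ with $\supp_R$ for finitely generated modules and compact objects.

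One slip worth correcting in your justification of $\Hom^*_{\sfT}(\gam_\fp C,\gam_\fp D)\ne 0$: the full subcategory $\{Y\mid\Hom^*_{\sfT}(\gam_\fp C,Y)=0\}$ is closed under products but not, in general, under coproducts ($\gam_\fp C$ need not be compact), so it is not a localizing subcategory and you cannot conclude that it contains $\Loc_{\sfT}(\gam_\fp D)$. The argument must be run in the other variable: $\{W\mid\Hom^*_{\sfT}(W,\gam_\fp D)=0\}$ is localizing, because $\Hom_{\sfT}(\coprod_i W_i,Y)\cong\prod_i\Hom_{\sfT}(W_i,Y)$; if it contained $\gam_\fp C$ it would contain $\Loc_{\sfT}(\gam_\fp C)=\gam_\fp\sfT\ni\gam_\fp D$, forcing $\gam_\fp D=0$. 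This is precisely Lemma~\ref{lem:loc-minimal} of the paper, so the statement you need is available and the fix is immediate.
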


The statement of this theorem is inspired by an analogous statement for modules over complete intersection local rings, due to Avramov and Buchweitz~\cite{Avramov/Buchweitz:2000a}. A stratification theorem is
not yet available in this context; see however \cite{Iyengar:2009a}. 

The stratification condition also implies that Ravenel's `telescope conjecture'~\cite{Ravenel:1987a}, sometimes called the `smashing conjecture', holds for $\sfT$.

\begin{theorem}
\label{ithm:smash}
If $\sfT$ is noetherian and stratified by $R$ and $L\col\sfT\to\sfT$
is a localization functor that preserves arbitrary coproducts, then
the localizing subcategory $\Ker L$ is generated by objects that are
compact in $\sfT$.
\end{theorem}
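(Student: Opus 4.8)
The plan is to follow the standard template for proving the telescope conjecture from a classification of localizing subcategories, as pioneered by Neeman for the derived category of a commutative noetherian ring. Recall that a localization functor $L\col\sfT\to\sfT$ preserving coproducts is \emph{smashing}, meaning $\Ker L$ is a localizing subcategory closed under coproducts for which the inclusion $\Ker L\to\sfT$ admits a coproduct-preserving right adjoint; equivalently, the associated acyclization functor $\gam^L$ and localization functor $L$ both preserve coproducts. The goal is then to show that $\Ker L = \Loc(\mathcal{C})$ for some set $\mathcal{C}$ of objects that are compact in $\sfT$. The natural candidate is $\mathcal{C} = (\Ker L)^{\sfc}$, the objects of $\Ker L$ that happen to be compact in $\sfT$; so what must be shown is that $\Ker L = \Loc((\Ker L)^{\sfc})$, i.e. that $\Ker L$ is generated by its compact objects.

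First I would invoke Theorem~\ref{ithm:classify-thick}: since $\sfT$ is noetherian and stratified by $R$, the thick subcategories of $\sfT^{\sfc}$ correspond bijectively to specialization closed subsets of $\supp_R\sfT$, with a thick subcategory $\mathsf{C}$ sent to $\bigcup_{C\in\mathsf{C}}\supp_R C$. Next I would observe that $\Ker L$, being a localizing subcategory of $\sfT$, corresponds under Theorem~\ref{thm:classifying} (the stratified version of Theorem~\ref{ithm:classification}) to a subset $\mcV\subseteq\supp_R\sfT$, namely $\mcV = \supp_R(\Ker L)$; explicitly $\Ker L = \{X\in\sfT \mid \supp_R X\subseteq \mcV\}$. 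The key point is that $\mcV$ must be specialization closed. This follows because $\Ker L$ is a smashing localizing subcategory: it is closed under the functors $\gam_{\fp}$ — indeed for any localizing subcategory $\sfS$ and any $X\in\sfS$ one has $\gam_\fp X\in\Loc(X)\subseteq\sfS$ — but more is needed, namely that it is also closed under the local cohomology/colocalization with respect to \emph{closed} subsets, which is where the smashing hypothesis enters: the acyclization $\gam_{\mcW}$ for $\mcW$ closed preserves coproducts and is built from compacts, so $\Ker L$ being smashing forces the associated subset to be closed under specialization. (Concretely: if $\fp\in\mcV$ and $\fq\supseteq\fp$, one uses that $\gam_{\overline{\{\fq\}}}$ applied to an object with support $\{\fp\}$ recovers that object, together with the compatibility of $\Ker L$ with these functors.)

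Once $\mcV=\supp_R(\Ker L)$ is known to be specialization closed, Theorem~\ref{ithm:classify-thick} produces a thick subcategory $\mathsf{C}$ of $\sfT^{\sfc}$ with $\bigcup_{C\in\mathsf{C}}\supp_R C = \mcV$. Then $\Loc(\mathsf{C})$ is a localizing subcategory of $\sfT$ whose support is exactly $\mcV$, by compatibility of support with the formation of localizing subcategories generated by a set. By the classification of localizing subcategories (Theorem~\ref{thm:classifying}), two localizing subcategories with the same support coincide, so $\Loc(\mathsf{C}) = \Ker L$. Since $\mathsf{C}\subseteq\sfT^{\sfc}$ consists of compact objects, this exhibits $\Ker L$ as generated by objects compact in $\sfT$, which is precisely the assertion.

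The main obstacle I anticipate is the step showing that the support $\mcV$ of a smashing localizing subcategory is specialization closed. This is the only place the specific hypothesis (smashing, i.e. $L$ preserves coproducts) is used, and it is not a formal consequence of stratification alone — every localizing subcategory corresponds to \emph{some} subset of $\supp_R\sfT$, but only the smashing ones correspond to specialization closed subsets. The argument requires relating the coproduct-preservation of $\gam^L$ and $L$ to the behaviour of $\Ker L$ under the functors $\gam_{\mcW}$ and $L_{\mcW}$ attached to a closed subset $\mcW$ — these latter functors being, by the noetherian hypothesis, exactly the ones built from compact objects (finite localization). I would expect to need the fact that for $\mcW$ closed, $\gam_\mcW$ is a finite localization functor (with $\Im\gam_\mcW$ compactly generated in $\sfT$), together with an argument that a smashing subcategory is closed under such finite acyclizations; this may require a short lemma of independent interest, or a direct diagram chase using the triangle $\gam^L X\to X\to LX\to$ and its interaction with $\gam_\fp$ for the primes $\fp$ in the closure of a given prime in $\mcV$.
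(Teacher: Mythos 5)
Your global strategy (reduce to showing that $\supp_{R}\Ker L$ is specialization closed, then identify $\Ker L$ with $\sfT_{\mcV}$, which is compactly generated) is sound, but the one step you flag as the ``main obstacle'' is exactly where the proposal has a genuine gap, and the mechanism you sketch for it does not work. Closure of $\Ker L$ under the functors $\gam_{\mcW}$ and $L_{\mcW}$ for specialization closed $\mcW$ is \emph{automatic} for every localizing subcategory (Proposition~\ref{prop:kosproperties}(3) together with Lemma~\ref{lem:lchlocal}), so it cannot be the property that separates smashing subcategories from arbitrary ones; and indeed arbitrary localizing subcategories need not have specialization closed support (in $\sfD(\bbZ)$ the subcategory of objects supported at $(0)$ is localizing). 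Your parenthetical ``concrete'' argument is also backwards: if $\supp_{R}X=\{\fp\}$ and $\fp\subsetneq\fq$, then $\gam_{\mcV(\fq)}X=0$ by Lemma~\ref{lem:gamp}, since $\mcV(\fq)\cap\{\fp\}=\emptyset$; the functor kills such an object rather than recovering it. So nothing in the proposal actually uses the hypothesis that $L$ preserves coproducts in a way that forces specialization closure.

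The paper's proof supplies the missing mechanism by dualizing: it works with the category $\Im L$ of $L$-local objects, which is always closed under products and which the coproduct hypothesis makes into a \emph{localizing} subcategory. The key assertion is then that any product-closed localizing subcategory $\sfS$ has generalization closed support. This is where noetherianity and stratification enter concretely: for $\fp\subseteq\fq$ with $\fq\in\supp_{R}\sfS$ one chooses a compact $C$ with $\fp\in\supp_{R}C$ (so also $\fq\in\supp_{R}C$, since $\supp_{R}C$ is closed), notes via Proposition~\ref{prop:iobject} that the Brown-representability object $T_{C}(E(R/\fq))$ lies in $\gam_{\fq}\sfT\subseteq\sfS$, and then uses that $T_{C}$ preserves products together with Lemma~\ref{lem:smash} (that $E(R/\fp)$ is a summand of a product of shifts of $E(R/\fq)$) to conclude $T_{C}(E(R/\fp))\in\sfS$, whence $\fp\in\supp_{R}\sfS$. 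Since a localization functor is determined by its local objects, $L\cong L_{\mcV}$ for $\mcV$ specialization closed, and $\Ker L=\sfT_{\mcV}$ is generated by compacts. To repair your write-up you would need to either import this product/injective-hull argument or find a genuine substitute for it; as it stands the central claim is asserted rather than proved.
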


This result is contained in Theorem~\ref{thm:smash}, which establishes
also a classification of localizing subcategories of $\sfT$ that are
also closed under products. Another application,
Corollary~\ref{cor:rickard}, addresses a question of Rickard. If
$\sfS$ is a localizing subcategory of $\sfT$, write ${^\perp}\sfS$ for
the full subcategory of objects $X$ such that there are no nonzero
morphisms from $X$ to any object in $\sfS$.

\begin{theorem}
\label{ithm:jeremy}
Suppose that $\sfT$ is noetherian and stratified by $R$, and that
$\sfS$ is a localizing subcategory of $\sfT$.  Then ${^\perp}\sfS$ is
the localizing subcategory corresponding to the set of primes
$\{\fp\in\Spec R\mid \mcV(\fp)\cap \supp_{R}\sfS=\emptyset\}$.
\end{theorem}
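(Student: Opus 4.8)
The plan is to understand $^{\perp}\sfS$ by combining the classification of localizing subcategories (Theorem~\ref{ithm:classification}, or rather its corollary when $\sfT$ is stratified: localizing subcategories correspond to subsets of $\supp_R\sfT$) with the computation of morphisms in terms of support provided by Theorem~\ref{ithm:intersection}. Since $^{\perp}\sfS$ is visibly a localizing subcategory of $\sfT$, by stratification it is determined by its support; so the task reduces to showing that $\supp_R({^{\perp}\sfS}) = \mcW$, where $\mcW := \{\fp\in\Spec R\mid \mcV(\fp)\cap\supp_R\sfS=\emptyset\}$. Equivalently, writing $\mcU := \supp_R\sfS$, I want to identify the localizing subcategory $^{\perp}\sfS$ with the one generated by $\{\gam_{\fp}\one' \mid \fp\in\mcW\}$ — or more intrinsically, prove the two inclusions between $^{\perp}\sfS$ and the localizing subcategory $\sfS'$ supported on $\mcW$.

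The key steps, in order, would be as follows. First, I would record that $\mcW$ is precisely the complement of the specialization closure of $\mcU=\supp_R\sfS$; indeed $\mcV(\fp)\cap\mcU\neq\emptyset$ says some prime in $\mcU$ contains $\fp$, i.e. $\fp$ lies in the specialization closure of $\mcU$ (taking $\mcV(\fp)$ to be the set of primes containing $\fp$, so that $\fp$ specializes into $\mcV(\fp)$). Hence $\mcW = \Spec R \smallsetminus \cl(\mcU)$, and since complements of specialization-closed sets are the supports of localizing subcategories, there is a well-defined localizing subcategory $\sfS'$ with $\supp_R\sfS' = \mcW$. Second, for the inclusion $\sfS' \subseteq {^{\perp}\sfS}$: take $X\in\sfS'$ and $Y\in\sfS$; I want $\Hom_{\sfT}^{*}(X,Y)=0$. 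Reduce to $X=\gam_{\fp}X$ with $\fp\in\mcW$ and $Y=\gam_{\fq}Y$ with $\fq\in\mcU$ using the local-global principle and the fact that $\Hom^{*}$ can be computed after applying the $\gam_{\fp}$ (the morphisms between objects with disjoint supports should vanish — this is the analogue, in the general triangulated setting, of the orthogonality properties of the functors $\gam_{\fp}$ established in \cite{\bik:2008a}); the point is that $\fp\in\mcW$ forces $\fp\notin\mcV(\fq)$ (since $\fq\in\mcU$), i.e. $\fq\not\supseteq\fp$, which is the condition making $\Hom_{\sfT}^{*}(\gam_{\fp}X,\gam_{\fq}Y)$ vanish. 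Third, for the reverse inclusion $^{\perp}\sfS\subseteq\sfS'$: suppose $X\in{^{\perp}\sfS}$ but $\supp_R X \not\subseteq\mcW$, so there is $\fp\in\supp_R X$ with $\fp\in\cl(\mcU)$, say $\fq\supseteq\fp$ with $\fq\in\mcU=\supp_R\sfS$. Then I need a nonzero morphism from $X$ to some object of $\sfS$; the strategy is to produce, from $\fp\in\supp_R X$ and $\fq\in\supp_R\sfS$ with $\fq\in\mcV(\fp)$, a nonzero map $\gam_{\fq}X \to Y$ for a suitable $Y\in\sfS$, using that $\gam_{\fq}X\neq 0$ when $\fq\in\cl(\supp_R X)$ together with the non-triviality of $\gam_{\fq}\sfT$ (stratification gives $\gam_{\fq}\sfT$ minimal, hence $\sfS\cap\gam_{\fq}\sfT = \gam_{\fq}\sfT$), contradicting $X\in{^{\perp}\sfS}$.

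The main obstacle will be the third step — establishing $^{\perp}\sfS\subseteq\sfS'$ — because it requires producing an actual nonzero morphism rather than merely manipulating supports, and the naive candidate $\gam_{\fq}X$ need not itself map nonzero into $\sfS$. I expect the clean way around this is to pass through the local category at $\fq$: localize $X$ at $\fq$, observe that its image $X_{\fq}$ is nonzero (since $\fq$ is in the closure of $\supp_R X$), and use that in the stratified setting $\gam_{\fq}\sfT$ has no proper localizing subcategories, so $X_{\fq}$ generates it; then any nonzero object $Y\in\sfS\cap\gam_{\fq}\sfT$ (which is nonzero by stratification, since $\fq\in\supp_R\sfS$) is built from $X_{\fq}$, forcing a nonzero morphism $X\to Y$ up to shift via the adjunction between localization and the inclusion. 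Making this last implication precise — that $Y\in\Loc(X_{\fq})$ together with $Y\neq 0$ yields $\Hom^{*}_{\sfT}(X,Y)\neq 0$ — is the delicate point and will use that $X_{\fq}$ is a localization of $X$, so morphisms out of $X_{\fq}$ in the local category are morphisms out of $X$ in $\sfT$, combined with a dévissage over the localizing subcategory generated by $X_{\fq}$. Once both inclusions are in hand, stratification finishes the proof by matching supports.
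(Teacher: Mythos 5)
Your overall frame is right: ${}^{\perp}\sfS$ is a localizing subcategory, so under stratification it is determined by its support, and the easy inclusion (objects supported in $\mcW=\{\fp\mid\mcV(\fp)\cap\supp_R\sfS=\emptyset\}$ are left orthogonal to $\sfS$) follows from the orthogonality property of supports, since $\mcW$ is specialization closed and disjoint from $\supp_R\sfS$. (Watch the directions, though: $\mcW$ is the complement of the set of \emph{generizations} of points of $\supp_R\sfS$, not of its specialization closure; your identification $\mcW=\Spec R\setminus\cl(\supp_R\sfS)$ is incorrect, although this does not damage the substance of your second step.)

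The genuine gap is in your third step. You need: if $\fp\in\supp_RX$ and $\fq\supseteq\fp$ lies in $\supp_R\sfS$, then $X$ admits a nonzero morphism into $\sfS$. Your detecting object is $\gam_\fq X$ (or $X_\fq$), and you justify its nonvanishing by ``$\gam_\fq X\neq0$ when $\fq\in\cl(\supp_RX)$''. That is false: $\cl(\supp_RX)$ is the big support $\Supp_RX$, whereas $\gam_\fq X\neq0$ holds only for $\fq\in\supp_RX$ itself. For instance in $\sfD(\bbZ)$ one has $\supp_{\bbZ}\bbQ=\{(0)\}$, so $\gam_{(p)}\bbQ=0$ for every prime $p$ even though $(p)\in\cl(\supp_{\bbZ}\bbQ)$. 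Your fallback $X_\fq$ is indeed nonzero, but it is not an object of $\gam_\fq\sfT$ (it is $\fq$-local, not $\fq$-torsion), its image $\gam_{\mcV(\fq)}X_\fq=\gam_\fq X$ in $\gam_\fq\sfT$ vanishes in exactly these problematic cases, and the implication ``$Y\in\Loc_{\sfT}(X_\fq)$ and $Y\neq0$ imply $\Hom^{*}_{\sfT}(X,Y)\neq0$'' is not available outside a minimal subcategory. The paper's proof supplies precisely the missing device: the Brown-representability objects $T_C(I)$ with $\Hom_{\sfT}(-,T_C(I))\cong\Hom_R(\Hom^{*}_{\sfT}(C,-),I)$. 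If $\fq\in\Supp_RX$ then $\Hom^{*}_{\sfT}(C,X)_\fq\neq0$ for some compact $C$, whence $\Hom^{*}_{\sfT}(X,T_C(E(R/\fq)))\neq0$; and Proposition~\ref{prop:iobject} --- this is where the noetherian hypothesis, which your argument never invokes, is essential --- places $T_C(E(R/\fq))$ in $\gam_\fq\sfT$, hence in $\sfS$ by the classification. Without this construction (or some equivalent way of detecting the big support of $X$ by objects of $\gam_\fq\sfT$), your third step does not close.
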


In Section~\ref{sec:Tensor triangulated categories} we consider the case when $\sfT$ has a structure of a tensor triangulated category compatible with the $R$-action, and discuss a notion of stratification suitable for this context. A noteworthy feature is that the analogue of the local-global principle always holds, so stratification concerns only whether each $\gam_{\fp}\sfT$ is minimal as tensor ideal localizing subcategory. When this property holds one has the following analogue of the tensor product theorem of modular representation theory as described in \cite[Theorem 10.8]{Benson/Carlson/Rickard:1996a}; cf. also Theorem~\ref{ithm:intersection}.

\begin{theorem}
Let $\sfT$ be a tensor triangulated category with a canonical $R$-action. If $R$ stratifies $\sfT$, then for any objects $X,Y$ in $\sfT$ there is an equality
\[
\supp_{R}(X\otimes Y) = \supp_{R} X \cap \supp_{R}Y\,.
\] 
\end{theorem}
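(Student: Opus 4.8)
The plan is to deduce the equality from the stratification hypothesis together with basic properties of the functors $\gam_\fp$ and their compatibility with the tensor product. The inclusion $\supp_R(X\otimes Y)\subseteq\supp_R X\cap\supp_R Y$ is the easy half: for a tensor triangulated category with compatible $R$-action one always has $\gam_\fp(X\otimes Y)\simeq X\otimes\gam_\fp Y\simeq \gam_\fp X\otimes Y$ (localization and colocalization functors attached to $\fp$ are given by tensoring with a fixed object $\gam_\fp\one$), so if either $\gam_\fp X=0$ or $\gam_\fp Y=0$ then $\gam_\fp(X\otimes Y)=0$. This uses only the formal behaviour of $\gam_\fp$ with respect to $\otimes$ and needs no minimality.

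For the reverse inclusion, fix $\fp\in\supp_R X\cap\supp_R Y$; I want to show $\gam_\fp(X\otimes Y)\neq 0$, equivalently $\gam_\fp X\otimes\gam_\fp Y\neq 0$. Both $\gam_\fp X$ and $\gam_\fp Y$ are nonzero objects of $\gam_\fp\sfT$. Since $R$ stratifies $\sfT$, the tensor ideal localizing subcategory $\gam_\fp\sfT$ is minimal, so the tensor ideal localizing subcategory generated by $\gam_\fp Y$ is all of $\gam_\fp\sfT$; in particular $\gam_\fp X$ lies in the localizing subcategory generated by $\{\gam_\fp Y\otimes W\}$ as $W$ runs over $\sfT$. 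If $\gam_\fp X\otimes\gam_\fp Y$ were zero, I would argue that $\gam_\fp X\otimes(\gam_\fp Y\otimes W)\simeq(\gam_\fp X\otimes\gam_\fp Y)\otimes W=0$ for all $W$, and hence, since $-\otimes\gam_\fp X$ is an exact coproduct-preserving functor, that $\gam_\fp X\otimes Z=0$ for every $Z$ in this generated subcategory, in particular for $Z=\gam_\fp X$ itself. Thus $\gam_\fp X\otimes\gam_\fp X=0$, and then the same minimality argument applied to $\gam_\fp X$ in place of $\gam_\fp Y$ shows $\gam_\fp X\otimes\gam_\fp X'=0$ for all $X'\in\gam_\fp\sfT$; taking $X'=\gam_\fp\one$ (the tensor unit of the subcategory, up to the appropriate idempotent) gives $\gam_\fp X\simeq \gam_\fp X\otimes\gam_\fp\one=0$, contradicting $\fp\in\supp_R X$.

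The main obstacle is the careful bookkeeping around the tensor unit of $\gam_\fp\sfT$: the ambient unit $\one$ need not lie in $\gam_\fp\sfT$, so one works instead with the idempotent $\gam_\fp\one$, which acts as a unit on $\gam_\fp\sfT$ because $\gam_\fp X\simeq X\otimes\gam_\fp\one\simeq\gam_\fp X\otimes\gam_\fp\one$. One also needs that $\gam_\fp\sfT$ is closed under $\otimes$ with arbitrary objects of $\sfT$ — i.e. that it is a tensor ideal — which follows from $\gam_\fp(X\otimes W)\simeq\gam_\fp X\otimes W$. Once these identifications are in place, the argument is the standard "minimal tensor ideal $\Rightarrow$ tensor-nilpotent-free" dichotomy, and the nonvanishing of $\gam_\fp X\otimes\gam_\fp Y$ is forced. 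I would present this by first recording the isomorphisms $\gam_\fp(X\otimes Y)\simeq\gam_\fp X\otimes Y\simeq \gam_\fp X\otimes\gam_\fp Y$ as a preliminary lemma (or citing the relevant statement from the tensor-triangulated section), and then giving the short minimality argument above.
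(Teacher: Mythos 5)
Your proposal is correct and follows essentially the same route as the paper: both halves rest on the isomorphisms $\gam_{\fp}(X\otimes Y)\cong \gam_{\fp}X\otimes Y\cong\gam_{\fp}X\otimes\gam_{\fp}Y$ from \eqref{eq:loc-tensor}, and the reverse inclusion uses minimality of $\gam_{\fp}\sfT$ as a tensor ideal localizing subcategory to force $\gam_{\fp}\one$ into the tensor ideal generated by a nonzero $\fp$-local $\fp$-torsion object, with $\gam_{\fp}\one$ acting as the local unit. The only difference is cosmetic: the paper argues directly that $\gam_{\fp}Y\in\Loc^{\otimes}_{\sfT}(\gam_{\fp}X\otimes Y)$, whereas you run the contrapositive through the kernel of $\gam_{\fp}X\otimes-$ (and your intermediate step via $\gam_{\fp}X\otimes\gam_{\fp}X=0$ is redundant, since $\gam_{\fp}\one$ already lies in $\Loc^{\otimes}_{\sfT}(\gam_{\fp}Y)$).
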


This result reappears as Theorem~\ref{thm:ttintersection}. One can establish analogues of other results discussed above for tensor triangulated categories, but we do not do so; the arguments required are the same, and in any case, many of these results appear already in \cite{\bik:2008b}, at least for triangulated categories associated to modular representations of finite groups.

Most examples of stratified triangulated categories that appear in this work are imported from elsewhere in the literature. The one exception is the derived category of differential graded modules over any graded-commutative noetherian ring $A$. In Section~\ref{sec:Formal differential graded algebras} we verify that this triangulated category is stratified by the canonical $A$-action, building on arguments from \cite[\S5]{\bik:2008b} which dealt with the case $A$ is a graded polynomial algebra over a field.

There are interesting classes of triangulated categories which cannot be stratified via a ring action, in the sense explained above; see  Example~\ref{ex:quiver}.  On the other hand, there are important contexts where it is reasonable to expect stratification, notably, modules over cocommutative Hopf algebras and modules over the Steenrod algebra, where analogues of Quillen stratification have been proved by Friedlander and Pevtsova~\cite{Friedlander/Pevtsova:2005a} and Palmieri \cite{Palmieri:1999} respectively. One goal of \cite{\bik:2008a} and the present work is to pave the way to such results.

\subsection*{Acknowledgments}
It is our pleasure to thank Zhi-Wei Li for a critical reading of an earlier version of this manuscript.

\section{Local cohomology and support}
\label{sec:Local cohomology and support}

The foundation for this article is the work in \cite{\bik:2008a} where
we constructed analogues of local cohomology functors and support from
commutative algebra for triangulated categories.  In this section we
further develop these ideas, as required, and along the way recall
basic notions and constructions from \emph{op.~cit.}

\medskip

\emph{Henceforth $R$ denotes a graded-commutative noetherian ring and $\sfT$ a compactly generated $R$-linear triangulated category with arbitrary coproducts.}

\medskip

We begin by explaining what this means.

\subsection*{Compact generation}
Let $\sfT$ be a triangulated category admitting arbitrary coproducts.
A \emph{localizing subcategory} of $\sfT$ is a full triangulated
subcategory that is closed under taking coproducts.  We write
$\Loc_\sfT(\sfC)$ for the smallest localizing subcategory containing a
given class of objects $\sfC$ in $\sfT$, and call it the localizing
subcategory \emph{generated} by $\sfC$.

An object $C$ in $\sfT$ is \emph{compact} if the functor
$\Hom_{\sfT}(C,-)$ commutes with all coproducts; we write
$\sfT^{\sfc}$ for the full subcategory of compact objects in
$\sfT$. The category $\sfT$ is \emph{compactly generated} if it is
generated by a set of compact objects.

We recall some facts concerning localization functors; see, for
example, \cite[\S3]{Benson/Iyengar/Krause:2008a}.

\subsection*{Localization}
A \emph{localization functor} $L\col\sfT\to\sfT$ is an exact functor that admits a natural transformation $\eta\col\Id_\sfT\to L$, called \emph{adjunction}, such that $L(\eta X)$ is an isomorphism and $L(\eta X)=\eta (LX)$ for all objects $X\in\sfT$.  A localization functor $L\col\sfT\to\sfT$ is essentially uniquely determined by the
corresponding full subcategory
\[
\Ker L=\{X\in\sfT\mid LX=0\}\,.
\]
This means that if $L'$ is a localization functor with $\Ker L \subseteq \Ker L'$ and $\eta'$ is its adjunction, then there is a unique morphism $\iota\col L\to L'$ such that $\iota\eta=\eta'$. Given such a localization functor $L$, the natural transformation $\Id_\sfT\to L$ induces for each object $X$ in $\sfT$ a natural exact
\emph{localization triangle}
\begin{equation*}
\gam X\lto X\lto \bloc X\lto
\end{equation*}
This exact triangle gives rise to an exact functor
$\gam\col\sfT\to\sfT$ with
\[
\Ker L = \Im\gam \quad\text{and}\quad \Ker\gam =\Im L\,.
\]
Here $\Im F$, for any functor $F\col\sfT\to\sfT$, denotes the
\emph{essential image}: the full subcategory of $\sfT$ formed by
objects $\{X\in\sfT\mid X\cong FY\text{ for some $Y$ in $\sfT$}\}$.

The next lemma provides the existence of localization functors with
respect to a fixed localizing subcategory; see
\cite[Theorem~2.1]{Neeman:1992b} for the special case that the
localizing subcategory is generated by compact objects.

\begin{lemma}
\label{lem:loc-set}
Let $\sfT$ be a compactly generated triangulated category. If a
localizing subcategory $\sfS$ of $\sfT$ is generated by a set of
objects, then there exists a localization functor $L\col\sfT\to\sfT$
with $\Ker L=\sfS$.
\end{lemma}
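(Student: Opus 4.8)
The plan is to invoke the general theory of Bousfield localization for compactly generated triangulated categories. The key point is that the full subcategory $\sfS$, being generated by a \emph{set} of objects $\{S_i\}_{i\in I}$, satisfies the hypotheses of Brown representability. First I would observe that since $\sfT$ is compactly generated, it satisfies Brown representability, so every cohomological functor $\sfT^{\op}\to \mathsf{Ab}$ that sends coproducts to products is representable. Next I would consider, for an object $X\in\sfT$, the problem of constructing the localization triangle $\gam X\to X\to \bloc X\to$ with $\gam X\in\sfS$ and $\Hom_\sfT(S,\bloc X)=0$ for all $S\in\sfS$; equivalently, $\bloc X$ lies in the right orthogonal $\sfS^{\perp}$. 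Once such a triangle functorially exists, the assignment $X\mapsto \bloc X =: LX$ is the desired localization functor with $\Ker L=\sfS$, since $\Ker L$ consists of those $X$ with $\bloc X=0$, i.e. $X\cong\gam X\in\sfS$, while conversely any $X\in\sfS$ has $\Hom_\sfT(X,\bloc X)=0$ forcing the map $X\to\bloc X$ to be zero, and then the triangle splits so $\bloc X$ is a summand of $\Sigma\gam X\in\sfS\cap\sfS^{\perp}=0$.

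The heart of the argument is the construction of the triangle. Here I would proceed in one of two standard ways. The first is to build $\gam X$ by a transfinite cellular construction: starting from $X_0=0$, form $X_{\alpha+1}$ as the cofibre of the canonical map $\coprod_{i,\,f\colon \Sigma^n S_i\to X/X_\alpha} \Sigma^n S_i \to X/X_\alpha$ (more precisely, working with the cone on $X_\alpha\to X$), iterating through the ordinals and taking homotopy colimits at limit stages; because $\sfS$ is generated by a \emph{set}, this process stabilizes and one obtains $\gam X\in\Loc_\sfT(\{S_i\})=\sfS$ together with a map $\gam X\to X$ whose cofibre $\bloc X$ is $\sfS$-local. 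The second, cleaner route is to cite the standard theorem (e.g. Neeman's version of Bousfield localization, or \cite[Theorem~2.1]{Neeman:1992b} adapted to localizing subcategories generated by a set rather than by compacts): a localizing subcategory of a well-generated (in particular compactly generated) triangulated category that is generated by a set is the kernel of a localization functor. Since the excerpt already cites \cite[Theorem~2.1]{Neeman:1992b} for the compact case, I would note that the same proof applies verbatim once one only uses that the generators form a set.

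The main obstacle is the passage from a set of generators to the actual localizing subcategory, i.e.\ verifying that the transfinite construction terminates and that the cofibre is genuinely $\sfS$-local (not merely orthogonal to the generators). This requires a cardinality/accessibility argument: one needs that $\sfS^{\perp}$ is detected by orthogonality to the set $\{S_i\}$ alone, which holds because $\{S_i\}^{\perp}$ is automatically a localizing subcategory (closed under coproducts, triangles and shifts) containing no $S_i$-built objects except via the orthogonality relation—so $\{S_i\}^{\perp}=\sfS^{\perp}$—and that the cellular tower stabilizes at a stage bounded by a regular cardinal larger than the size of the generating data. All of this is routine given the compact generation of $\sfT$ and is exactly what the cited reference supplies; the only adaptation is bookkeeping with cardinals rather than appealing to compactness of the generators. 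Therefore the proof reduces to a citation with this one-line observation about replacing ``compact'' by ``forms a set''.
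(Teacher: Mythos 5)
Your overall strategy is sound and the result you are after is the standard Bousfield localization theorem, but you take a genuinely different route from the paper. The paper does not construct the acyclization triangle directly: it passes to the Verdier quotient $\sfT/\sfS$, invokes \cite[Corollary~4.4.3]{Neeman:2001a} to know that the morphisms in $\sfT/\sfS$ form sets, uses Brown representability for $\sfT$ to produce a fully faithful right adjoint $Q_\rho$ of the quotient functor $Q$, and sets $L=Q_\rho Q$. Your approach instead builds the functor $\gam$ (equivalently, a right adjoint to the inclusion $\sfS\hookrightarrow\sfT$) by a transfinite cellular construction and takes $L$ to be the cone. Both are legitimate; the quotient-plus-adjoint route outsources all set-theoretic difficulties to Neeman's theory of well-generated categories in one citation, while your route makes the acyclization explicit but forces you to confront those difficulties yourself.

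That is where you are too cavalier. The claim that the proof of \cite[Theorem~2.1]{Neeman:1992b} ``applies verbatim once one only uses that the generators form a set'' is not correct: that proof uses compactness of the generators in an essential way, namely to show that any map from a generator into the homotopy colimit of the tower factors through a finite stage, whence the cofibre is orthogonal to the generators. For non-compact generators this factorization is exactly the problem --- homotopy colimits in a triangulated category are not genuine colimits, so ``$\kappa$-smallness'' arguments do not transfer naively, and one needs the machinery of well-generated categories (the generators of $\sfS$ are $\alpha$-compact in $\sfT$ for some regular cardinal $\alpha$, $\sfS$ is itself well generated, hence satisfies Brown representability, hence the coproduct-preserving inclusion $\sfS\hookrightarrow\sfT$ has a right adjoint). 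This is the content of \cite[Chapter~8]{Neeman:2001a} and is precisely what the paper's citation supplies; it is a substantial theory, not a one-line bookkeeping adjustment. Your identification $\{S_i\}^{\perp}=\sfS^{\perp}$ and the derivation of $\Ker L=\sfS$ from the triangle are fine.
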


\begin{proof}
In \cite[Corollary~4.4.3]{Neeman:2001a} it is shown that the
collection of morphisms between each pair of objects in the Verdier
quotient $\sfT/\sfS$ form a set.  The quotient functor $Q\col
\sfT\to\sfT/\sfS$ preserves coproducts, and a standard argument based
on Brown's representability theorem \cite{Keller:1994a,Neeman:1996}
yields an exact right adjoint $Q_\rho$. Note that $Q_\rho$ is fully
faithful; see \cite[Proposition~I.1.3]{Gabriel/Zisman:1967a}. It
follows that the composite $L=Q_\rho Q$ is a localization functor
satisfying $\Ker L=\sfS$; see
\cite[Lemma~3.1]{Benson/Iyengar/Krause:2008a}.
\end{proof}

\subsection*{Central ring actions}
Let $R$ be a graded-commutative ring; thus $R$ is $\bbZ$-graded and
satisfies $r\cdot s=(-1)^{|r||s|}s\cdot r$ for each pair of
homogeneous elements $r,s$ in $R$.  We say that the triangulated
category $\sfT$ is \emph{$R$-linear}, or that $R$ acts on $\sfT$, if
there is a homomorphism $R\to Z^*(\sfT)$ of graded rings, where $
Z^*(\sfT)$ is the graded center of $\sfT$. In this case, for all
objects $X,Y\in\sfT$ the graded abelian group
\[ 
\Hom^*_\sfT(X,Y)=\bigoplus_{i\in\bbZ}\Hom_\sfT(X,\Si^i Y)
\]
carries the structure of a graded $R$-module.

\subsection*{Support} 
From now on, $R$ denotes a graded-commutative noetherian ring and $\sfT$ a compactly generated $R$-linear triangulated category with arbitrary coproducts.

We write $\Spec R$ for the set of homogeneous prime ideals of $R$.  Given a homogeneous ideal $\fa$ in $R$, we set
\[
\mcV(\fa) = \{\fp\in\Spec R\mid \fp\supseteq \fa\}\,.
\] 

Let $\fp$ be a point in $\Spec R$ and $M$ a graded $R$-module. We
write $M_{\fp}$ for the homogeneous localization of $M$ at $\fp$. When
the natural map of $R$-modules $M\to M_{\fp}$ is bijective $M$ is said
to be $\fp$-local. This condition is equivalent to:
$\supp_{R}M\subseteq \{\fq\in\Spec R\mid \fq\subseteq\fp\}$, where
$\supp_{R}M$ is the support of $M$. The module $M$ is $\fp$-torsion if
each element of $M$ is annihilated by a power of $\fp$; equivalently,
if $\supp_{R}M\subseteq \mcV(\fp)$; see \cite[\S2]{\bik:2008a} for
proofs of these assertions.

The \emph{specialization closure} of a subset $\mcU$ of $\Spec R$ is
the set
\[
\cl\mcU=\{\fp\in\Spec R\mid\text{there exists $\fq\in\mcU$ with
$\fq\subseteq \fp$}\}\,.
\] 
The subset $\mcU$ is \emph{specialization closed} if $\cl\mcU=\mcU$;
equivalently, if $\mcU$ is a union of Zariski closed subsets of $\Spec
R$. For each specialization closed subset $\mcV$ of $\Spec R$, we
define the full subcategory of $\sfT$ of $\mcV$-torsion objects as
follows:
\[
\sfT_\mcV= \{X\in\sfT\mid\Hom^*_\sfT(C,X)_\fp= 0\text{ for all }
C\in\sfT^c,\, \fp\in \Spec R\setminus \mcV\}\,.
\]
This is a localizing subcategory and there exists a localization
functor $L_\mcV\col\sfT\to\sfT$ such that $\Ker L_\mcV=\sfT_\mcV$; see
\cite[Lemma 4.3, Proposition 4.5]{\bik:2008a}. For each object $X$ in
$\sfT$ the adjunction morphism $X\to L_\mcV X$ induces the exact
localization triangle
\begin{equation}
\label{eq:locseq}
\gam_{\mcV}X\lto X\lto \bloc_{\mcV}X\lto\,.
\end{equation}
This exact triangle gives rise to an exact \emph{local cohomology
functor} $\gam_\mcV\col\sfT\to\sfT$.  In \cite{\bik:2008a} we
established a number of properties of these functors, to facilitate
working with them. We single out one that is used frequently in this
work: They commute with all coproducts in $\sfT$; see \cite[Corollary
6.5]{\bik:2008a}.

For each $\fp$ in $\Spec R$ set $\mcZ(\fp)=\{\fq\in\Spec R\mid
\fq\not\subseteq \fp\}$, so $\mcV(\fp)\setminus\mcZ(\fp)=\{\fp\}$, and
\[
X_\fp=L_{\mcZ(\fp)}X\quad\text{for each $X\in\sfT$.}
\]
The notation is justified by the next result which enhances
\cite[Theorem 4.7]{\bik:2008a}.

\begin{proposition}
\label{prop:cohlocal}
Let $\fp$ be a point in $\Spec R$ and $X,Y$ objects in $\sfT$. The
$R$-modules $\Hom_\sfT^*(X,Y_{\fp})$ and $\Hom_\sfT^*(X_{\fp},Y)$ are
$\fp$-local, so the adjunction morphism $Y\to Y_{\fp}$ induces a
unique homomorphism of $R$-modules
\[
\Hom_\sfT^*(X,Y)_{\fp}\lto\Hom_\sfT^*(X,Y_{\fp})\,.
\]
This map is an isomorphism if $X$ is compact.
\end{proposition}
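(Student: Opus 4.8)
The plan is to derive everything from a single object-level fact: that for a homogeneous element $r\in R$ not lying in $\fp$, multiplication by $r$ is an isomorphism on $Y_\fp=L_{\mcZ(\fp)}Y$, and likewise on $X_\fp$. To prove this I would work with the Koszul object $\kos{Y}{r}=\Cone(Y\xra{\,r\,}\Si^{|r|}Y)$. Applying $\Hom^*_\sfT(C,-)$ to its defining triangle $Y\xra{r}\Si^{|r|}Y\to\kos{Y}{r}\to\Si Y$ exhibits $\Hom^*_\sfT(C,\kos{Y}{r})$, for any compact $C$, as an extension of two $R$-modules each killed by $r$, hence killed by $r^2$; so $\Hom^*_\sfT(C,\kos{Y}{r})_\fq=0$ whenever $r\notin\fq$, and these primes $\fq$ include all $\fq\subseteq\fp$ since $r\notin\fp$. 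Thus $\kos{Y}{r}$ lies in $\sfT_{\mcZ(\fp)}=\Ker L_{\mcZ(\fp)}$. Applying the exact functor $L_{\mcZ(\fp)}$ to the same triangle now yields a triangle in which $L_{\mcZ(\fp)}\kos{Y}{r}=0$, and this forces $r\col Y_\fp\to\Si^{|r|}Y_\fp$ to be invertible — here one uses that a localization functor is $R$-linear, so carries multiplication by $r$ to multiplication by $r$; this in turn is a formal consequence of the defining property of the adjunction $\eta\col\Id\to L_{\mcZ(\fp)}$ together with the centrality of $r$. The argument for $X_\fp$ is identical. It follows that every homogeneous $r\in R\setminus\fp$ acts invertibly on $\Hom^*_\sfT(X,Y_\fp)$, by post-composition with the isomorphism $Y_\fp\to\Si^{|r|}Y_\fp$, and on $\Hom^*_\sfT(X_\fp,Y)$, by pre-composition with $X_\fp\to\Si^{|r|}X_\fp$; hence both graded $R$-modules are $\fp$-local.

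Next, granting the locality of $\Hom^*_\sfT(X,Y_\fp)$, the $R$-linear map $\Hom^*_\sfT(X,Y)\to\Hom^*_\sfT(X,Y_\fp)$ induced by the adjunction $Y\to Y_\fp$ has a target that is a module over $R_\fp$, so by the universal property of homogeneous localization it factors uniquely through $\Hom^*_\sfT(X,Y)\to\Hom^*_\sfT(X,Y)_\fp$; this factorization is the map asserted in the statement.

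For the final assertion I would assume $X$ compact and apply $\Hom^*_\sfT(X,-)$ to the localization triangle $\gam_{\mcZ(\fp)}Y\to Y\to Y_\fp\to$ of \eqref{eq:locseq}, obtaining a long exact sequence of graded $R$-modules that stays exact after localizing at $\fp$. Since $\gam_{\mcZ(\fp)}Y$ lies in $\Im\gam_{\mcZ(\fp)}=\sfT_{\mcZ(\fp)}$ and $\fp\notin\mcZ(\fp)$, the definition of $\sfT_{\mcZ(\fp)}$ gives $\Hom^*_\sfT(X,\gam_{\mcZ(\fp)}Y)_\fp=0$ — and this is the single point where compactness of $X$ enters. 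Hence the localized long exact sequence collapses to an isomorphism $\Hom^*_\sfT(X,Y)_\fp\cong\Hom^*_\sfT(X,Y_\fp)_\fp=\Hom^*_\sfT(X,Y_\fp)$, the last equality because the target is already $\fp$-local; and since this isomorphism is manifestly a factorization of $\Hom^*_\sfT(X,Y)\to\Hom^*_\sfT(X,Y_\fp)$ through localization at $\fp$, uniqueness in the previous paragraph identifies it with the canonical map.

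The one genuinely substantive step is the object-level invertibility of central elements $r\notin\fp$ on $L_{\mcZ(\fp)}(-)$, established in the first paragraph; everything afterwards is bookkeeping with long exact sequences and the universal property of localization. Within that step the two points that require care are that $\Hom^*_\sfT(C,\kos{Y}{r})$ is genuinely $r^2$-torsion rather than merely $r$-torsion, and that the localization functor $L_{\mcZ(\fp)}$ is $R$-linear so that it transports multiplication by $r$ correctly; both are routine features of the functors constructed in \cite{\bik:2008a}, and if already recorded there the proof reduces to the last two paragraphs.
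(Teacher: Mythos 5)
Your argument is correct, but it reaches the conclusion by a different route than the paper. The paper simply cites \cite[Theorem 4.7]{\bik:2008a} for the compact case, deduces that $\Hom^*_\sfT(C,Y_\fp)$ is $\fp$-local for compact $C$, and then bootstraps to arbitrary $X$ by observing that the $\fp$-local modules are closed under products, kernels, cokernels and extensions, so the class of $X$ with $\Hom^*_\sfT(X,Y_\fp)$ $\fp$-local is a localizing subcategory containing the compacts; finally $\Hom^*_\sfT(X_\fp,Y)$ is handled by factoring the $R$-action through $\End^*_\sfT(X_\fp)$. You instead prove the object-level statement that every homogeneous $r\notin\fp$ acts invertibly on $L_{\mcZ(\fp)}Y$ — this is precisely the paper's later Lemma~\ref{lem:invertible} specialized to $\mcV=\mcZ(\fp)$ (note $\mcV(r)\subseteq\mcZ(\fp)$ when $r\notin\fp$), and your Koszul-object argument for it is sound, including the point that $\Hom^*_\sfT(C,\kos Yr)$ is $r^2$-torsion and the identification $L(r_Y)=r_{LY}$, which the paper also uses tacitly in proving that lemma. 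From the object-level invertibility both locality assertions follow uniformly for arbitrary $X$, with no passage through compacts, and your direct proof of the compact isomorphism from the localization triangle (using that $\Hom^*_\sfT(X,\gam_{\mcZ(\fp)}Y)_\fp=0$ by the definition of $\sfT_{\mcZ(\fp)}$) is essentially the argument behind the cited \cite[Theorem 4.7]{\bik:2008a}. What you gain is a self-contained and more transparent proof that isolates where compactness enters; what the paper gains is brevity by leaning on the earlier reference. One presentational remark: since your first paragraph in effect re-proves Lemma~\ref{lem:invertible}, in the paper's logical order you would either cite the results from \cite{\bik:2008a} that its proof uses or keep your direct argument as you have it — both are fine.
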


\begin{proof}
The last assertion in the statement is \cite[Theorem
  4.7]{\bik:2008a}. It implies that the $R$-module
$\Hom_{\sfT}^{*}(C,Y_{\fp})$ is $\fp$-local for each compact object
$C$ in $\sfT$. It then follows that $\Hom_\sfT^*(X,Y_{\fp})$ is
$\fp$-local for each object $X$, since $X$ is in the localizing
subcategory generated by the compact objects, and the subcategory of
$\fp$-local modules is closed under taking products, kernels,
cokernels and extensions; see \cite[Lemma 2.5]{\bik:2008a}.

At this point we know that $\End_{\sfT}^{*}(X_{\fp})$ is $\fp$-local,
and hence so is $\Hom^{*}_{\sfT}(X_{\fp},Y)$, since the $R$-action on
it factors through the homomorphism $R\to \End^{*}_{\sfT}(X_{\fp})$.
\end{proof}

Consider the exact functor $\gam_{\fp}\col\sfT\to\sfT$ obtained by
setting
\[
\gam_{\fp}X= \gam_{\mcV(\fp)}(X_{\fp})\,.
\]
for each object $X$ in $\sfT$.  The essential image of the functor
$\gam_\fp$ is denoted by $\gam_\fp\sfT$, and an object $X$ in $\sfT$
belongs to $\gam_\fp\sfT$ if and only if $\Hom^{*}_{\sfT}(C,X)$ is
$\fp$-local and $\fp$-torsion for every compact object $C$; see
\cite[Corollary~4.10]{\bik:2008a}. From this it follows that
$\gam_\fp\sfT$ is a localizing subcategory.

The \emph{support} of an object $X$ in $\sfT$ is a subset of $\Spec R$
defined as follows:
\[
\supp_{R} X=\{\fp\in\Spec R\mid\gam_{\fp}X\ne 0\}\,.
\] 

In addition to properties of the functors $\gam_{\mcV}$ and
$L_{\mcV}$, and support, given in \cite{\bik:2008a}, we require also
the following ones.

\begin{lemma}
\label{lem:gamp}
Let $\mcV\subseteq\Spec R$ be a specialization closed subset and $\fp\in\Spec R$.
Then for each object $X$ in $\sfT$ one has
\[
\gam_{\fp}(\gam_{\mcV}X) \cong
\begin{cases}
\gam_{\fp}X & \text{when $\fp\in\mcV$,} \\ 0 & \text{otherwise,}
\end{cases}
\qquad\text{and}\qquad \gam_{\fp}(L_{\mcV}X) \cong
\begin{cases}
\gam_{\fp}X & \text{when $\fp\not\in\mcV$,} \\ 0 & \text{otherwise.}
\end{cases}
\]
\end{lemma}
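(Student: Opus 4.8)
The plan is to prove both isomorphisms by exploiting the composition formula $\gam_{\fp} = \gam_{\mcV(\fp)} \circ (-)_{\fp}$ together with standard idempotency and orthogonality relations among the functors $\gam_{\mcW}$ and $L_{\mcW}$ for specialization closed subsets $\mcW$, which are established in \cite{\bik:2008a}. The key observation is that all of these functors are, up to natural isomorphism, the local cohomology/localization functors attached to subsets of $\Spec R$, and the relevant subsets here are $\mcV$, $\mcZ(\fp)$, and $\mcV(\fp)$; the case division in the statement corresponds exactly to how these subsets intersect. For the first formula, note that when $\fp \in \mcV$ one has $\mcV(\fp) \subseteq \mcV$, so the composite $L_{\mcZ(\fp)} \gam_{\mcV}$ should again be computed by the subset $\mcV(\fp) \setminus \mcZ(\fp) = \{\fp\}$, while when $\fp \notin \mcV$ the subset $\mcV$ avoids the whole of $\mcV(\fp)$ in the relevant way and the composite vanishes.

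First I would recall (or reprove in a line) from \cite{\bik:2008a} the basic compatibilities: for specialization closed $\mcW_1 \subseteq \mcW_2$ one has $\gam_{\mcW_1} \gam_{\mcW_2} \cong \gam_{\mcW_1} \cong \gam_{\mcW_2}\gam_{\mcW_1}$ and $L_{\mcW_2} L_{\mcW_1} \cong L_{\mcW_2} \cong L_{\mcW_1} L_{\mcW_2}$, and the mixed vanishing $L_{\mcW_2}\gam_{\mcW_1} = 0 = \gam_{\mcW_1}L_{\mcW_2}$. I would also use that $\gam_{\mcV(\fp)}$ commutes with $L_{\mcZ(\fp)}$ since these are built from orthogonal pieces — more precisely that $\gam_{\fp}X$ can be computed either as $\gam_{\mcV(\fp)}(X_{\fp})$ or as $L_{\mcZ(\fp)}(\gam_{\mcV(\fp)}X)$; this commutativity is part of \cite[\S4-6]{\bik:2008a}. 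Granting these, the first formula becomes: $\gam_{\fp}(\gam_{\mcV}X) = L_{\mcZ(\fp)}\gam_{\mcV(\fp)}\gam_{\mcV}X$. If $\fp \in \mcV$ then $\mcV(\fp)\subseteq \mcV$, so $\gam_{\mcV(\fp)}\gam_{\mcV} \cong \gam_{\mcV(\fp)}$, giving $L_{\mcZ(\fp)}\gam_{\mcV(\fp)}X \cong \gam_{\fp}X$. If $\fp\notin\mcV$, then I would argue $\gam_{\mcV(\fp)}\gam_{\mcV}X$ is supported in $\mcV(\fp)\cap\mcV$, which consists only of primes strictly containing $\fp$, hence lies in $\sfT_{\mcZ(\fp)}$ and is killed by $L_{\mcZ(\fp)}$; alternatively one writes $\gam_{\mcV} = \gam_{\mcV(\fp)\cap\mcV} \oplus (\text{stuff in }\mcV\setminus\mcV(\fp))$-type decomposition and checks each piece. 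The second formula is dual: $\gam_{\fp}(L_{\mcV}X) = L_{\mcZ(\fp)}\gam_{\mcV(\fp)}L_{\mcV}X$; when $\fp\notin\mcV$ we have $\mcV\subseteq\mcZ(\fp)$ — indeed $\fq\in\mcV$ with $\fp\notin\mcV$ and $\mcV$ specialization closed forces $\fq\not\subseteq\fp$ — so $\gam_{\mcV(\fp)}L_{\mcV}$ may be replaced using the localization triangle of $L_{\mcV}$ and the vanishing $\gam_{\mcV(\fp)}\gam_{\mcV}$-versus-$L$ relations to recover $\gam_{\fp}X$; when $\fp\in\mcV$, then $\mcV(\fp)\subseteq\mcV$, so $L_{\mcV}X \in \sfT_{\mcV}^{\perp}$ has no support in $\mcV(\fp)$, whence $\gam_{\mcV(\fp)}L_{\mcV}X = 0$.

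A cleaner route for the containment arguments, which I would likely adopt, is to use the support-theoretic characterizations: $\gam_{\mcW}$ picks out the part of $X$ with support in $\mcW$ and $L_{\mcW}$ the part with support in $\Spec R\setminus\mcW$ (in the appropriate sense for objects, via \cite[Theorem 6.4 or similar]{\bik:2008a}), together with $\supp_R(\gam_{\fp}X) \subseteq \{\fp\}$. Then each of the four cases reduces to an elementary set-theoretic check of whether $\fp$ lies in the prescribed subset, plus the fact that an object with empty support which is built functorially from local cohomology functors is genuinely zero — here one may need to invoke that $\sfT$ has enough structure for "empty support implies zero" for objects of the form $\gam_{\mcW}X$ and $L_{\mcW}X$, which follows from the constructions in \cite{\bik:2008a}.

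The main obstacle I anticipate is the commutativity $\gam_{\mcV(\fp)} L_{\mcZ(\fp)} \cong L_{\mcZ(\fp)} \gam_{\mcV(\fp)}$ and, more generally, making rigorous the manipulations "$\gam_{\mcW_1}\gam_{\mcW_2}$ only sees $\mcW_1\cap\mcW_2$" when $\mcW_1,\mcW_2$ are not nested — these are not literally among the relations I quoted for nested subsets, and getting the non-nested case right (e.g. that $\gam_{\mcV(\fp)\cap\mcV} \cong \gam_{\mcV(\fp)}\gam_{\mcV}$) requires either citing the precise form of the results in \cite{\bik:2008a} that handle arbitrary specialization closed subsets or deriving it from the localization triangles by a short diagram chase. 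Everything else — the set-theoretic case analysis on whether $\fp\in\mcV$, and the passage through $(-)_{\fp} = L_{\mcZ(\fp)}$ — is routine once that commutativity is in hand.
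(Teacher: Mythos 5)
Your argument is correct but follows a genuinely different route from the paper's, whose entire proof is: apply the exact functor $\gam_{\fp}$ to the localization triangle $\gam_{\mcV}X\to X\to L_{\mcV}X\to$, and note that by the support formulas of \cite[Theorem~5.6]{\bik:2008a} ($\supp_{R}\gam_{\mcV}X=\mcV\cap\supp_{R}X$ and $\supp_{R}L_{\mcV}X=\supp_{R}X\setminus\mcV$) exactly one of the outer terms $\gam_{\fp}(\gam_{\mcV}X)$, $\gam_{\fp}(L_{\mcV}X)$ can be nonzero, according as $\fp\in\mcV$ or not; the rotated triangle then identifies the surviving term with $\gam_{\fp}X$. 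You instead expand $\gam_{\fp}$ as the composite $L_{\mcZ(\fp)}\gam_{\mcV(\fp)}$ and manipulate compositions of torsion and localization functors. The ``main obstacle'' you flag --- commutativity of $\gam_{\mcV(\fp)}$ with $L_{\mcZ(\fp)}$ and the intersection formula $\gam_{\mcW_{1}}\gam_{\mcW_{2}}\cong\gam_{\mcW_{1}\cap\mcW_{2}}$ for non-nested specialization closed subsets --- is not actually an obstacle: these are precisely \cite[Proposition~6.1]{\bik:2008a}, which the present paper invokes elsewhere (in the proof of Proposition~\ref{prop:kosproperties}), so your computation closes up once that reference is supplied, and your set-theoretic case analysis ($\mcV(\fp)\subseteq\mcV$ when $\fp\in\mcV$; $\mcV(\fp)\cap\mcV\subseteq\mcZ(\fp)$ and $\mcV\subseteq\mcZ(\fp)$ when $\fp\notin\mcV$) is right. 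One caution about your proposed ``cleaner route'' via supports alone: support formulas by themselves only yield the vanishing halves of the statement, since support detects only whether an object is zero; to obtain the isomorphisms $\gam_{\fp}(\gam_{\mcV}X)\cong\gam_{\fp}X$ you still need either the localization triangle (as the paper does) or the functorial composition laws (as in your main computation). In sum, the paper's proof is shorter and leans on one support computation; yours is more explicit about the functorial mechanism and needs no detection principle.
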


\begin{proof}
Apply the exact functor $\gam_\fp$ to the exact triangle $\gam_\mcV X\to X \to L_\mcV X\to$. The assertion then follows from the fact that either  $\gam_{\fp}(L_{\mcV}X)=0$ (and this happens precisely when $\fp\in\mcV$) or $\gam_{\fp}(\gam_{\mcV}X)=0$; see \cite[Theorem~5.6]{\bik:2008a}.
\end{proof}

Further results involve a useful construction from \cite[5.10]{\bik:2008a}.

\subsection*{Koszul objects}
Let $r\in R$ be a homogeneous element of degree $d$ and $X$ an object in $\sfT$. We denote $\kos Xr$ any object that appears in an exact triangle
\begin{equation}
\label{eq:kos}
X\stackrel{r}\lto \Si^{d}X\lto \kos X r \lto 
\end{equation}
and call it a \emph{Koszul object of $r$ on $X$}; it is well defined up to (nonunique) isomorphism. Given a homogeneous ideal $\fa$ in $R$ we write $\kos X{\fa}$ for any Koszul object obtained by iterating the construction above with respect to some finite sequence of generators for $\fa$. This object may depend on the choice of the generating sequence for $\fa$, but one has the following uniqueness statement; see also Proposition~\ref{prop:kosproperties}(2).

\begin{lemma}
\label{lem:kos-props}
Let $\fa$ be a homogenous ideal in $R$. Each object $X$ in $\sfT$ satisfies
\[
\supp_{R}(\kos X{\fa})=\mcV(\fa)\cap \supp_{R}X\,.
\]
\end{lemma}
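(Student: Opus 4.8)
The plan is to reduce the statement to the case of a principal ideal $\fa = (r)$ and then induct on the number of generators. For a single homogeneous element $r$ of degree $d$, I first analyze the two functors $\gam_{\fp}(-)$ and the Koszul construction via their compatibility with the localization triangle \eqref{eq:kos}. Applying the exact functor $\gam_{\fp}$ to $X \xrightarrow{r} \Si^{d}X \to \kos X r \to$ gives an exact triangle $\gam_{\fp}X \xrightarrow{r} \Si^{d}\gam_{\fp}X \to \gam_{\fp}(\kos X r) \to$, since $\gam_{\fp}$ is exact and commutes with suspension, and the action of $r$ commutes with $\gam_{\fp}$ because $r$ lies in the graded center. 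Thus $\gam_{\fp}(\kos X r) \cong \kos{\gam_{\fp}X}{r}$, reducing the computation of $\supp_R(\kos X r)$ to understanding, for each $\fp$, when $\kos{\gam_{\fp}X}{r}$ vanishes.

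Next I would distinguish two cases according to whether $r \in \fp$. If $r \notin \fp$, then $r$ acts invertibly on any $\fp$-local object, in particular on $\gam_{\fp}Y$ for any $Y$; concretely, $\Hom^*_{\sfT}(C, \gam_{\fp}Y)$ is $\fp$-local for every compact $C$, so multiplication by $r$ is an isomorphism on these Hom-modules, hence $r\col \gam_{\fp}Y \to \Si^d\gam_{\fp}Y$ is an isomorphism in $\sfT$, and therefore its cone $\kos{\gam_{\fp}Y}{r}$ is zero. Combined with the previous paragraph (applied with $Y = X$), this shows $\gam_{\fp}(\kos X r) = 0$ whenever $r \notin \fp$, i.e. $\supp_R(\kos X r) \subseteq \mcV(r)$. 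If $r \in \fp$, I claim $\gam_{\fp}(\kos X r) \cong \kos{\gam_{\fp}X}{r}$ is nonzero precisely when $\gam_{\fp}X \ne 0$: from the triangle $\gam_{\fp}X \xrightarrow{r} \Si^d\gam_{\fp}X \to \kos{\gam_{\fp}X}{r} \to$, vanishing of $\kos{\gam_{\fp}X}{r}$ would force $r$ to act invertibly on $\gam_{\fp}X$, but $\gam_{\fp}X$ is $\fp$-torsion — its Hom-modules from compacts are annihilated by powers of $\fp \ni r$ — so $r$ acts locally nilpotently; an invertible locally nilpotent action forces $\gam_{\fp}X = 0$. Hence for $r \in \fp$, $\gam_{\fp}(\kos X r) \ne 0 \iff \fp \in \supp_R X$, giving $\supp_R(\kos X r) = \mcV(r) \cap \supp_R X$.

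For a general ideal $\fa = (r_1, \dots, r_n)$, I would induct: writing $\kos X{\fa} = \kos{(\kos X{(r_1,\dots,r_{n-1})})}{r_n}$, the principal case gives $\supp_R(\kos X{\fa}) = \mcV(r_n) \cap \supp_R(\kos X{(r_1,\dots,r_{n-1})})$, and the inductive hypothesis gives $\supp_R(\kos X{(r_1,\dots,r_{n-1})}) = \mcV(r_1,\dots,r_{n-1}) \cap \supp_R X$; since $\mcV(r_n) \cap \mcV(r_1,\dots,r_{n-1}) = \mcV(\fa)$, we are done. An alternative is to establish $\gam_{\fp}(\kos X{\fa}) \cong \kos{\gam_{\fp}X}{\fa}$ directly by iterating the triangle argument, then invoke the fact that $\kos Y{\fa} = 0$ iff $\fa$ acts invertibly on $Y$, combined with the local/torsion dichotomy for $\gam_{\fp}Y$; the $\fq \ne \fp$ components handle both the $r \notin \fp$ and nilpotence phenomena uniformly.

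The main obstacle I anticipate is the careful justification of the two key facts about the action of a central element $r$ on $\gam_{\fp}Y$: that $r$ acts invertibly when $r \notin \fp$, and locally nilpotently when $r \in \fp$. These follow from the characterization (recalled in the excerpt after \cite[Corollary~4.10]{\bik:2008a}) that $\Hom^*_{\sfT}(C, \gam_{\fp}Y)$ is $\fp$-local and $\fp$-torsion for all compact $C$, together with the observation that a morphism $f\col Y \to \Si^d Y$ in a compactly generated $\sfT$ is an isomorphism if and only if $\Hom^*_{\sfT}(C,f)$ is an isomorphism for all compact $C$ — and that a locally nilpotent yet invertible endomorphism of a Hom-module over a commutative ring forces that module to vanish. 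Assembling these into a clean statement that $r$-invertibility of $\gam_{\fp}Y$ is equivalent to $\gam_{\fp}Y = 0$ when $r \in \fp$ is the crux; everything else is a formal consequence of exactness of $\gam_{\fp}$ and centrality of $R$.
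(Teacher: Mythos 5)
Your proposal is correct and follows essentially the same route as the paper: reduce to a principal ideal, apply the exact functor $\gam_{\fp}$ to the Koszul triangle, and exploit that $\Hom^{*}_{\sfT}(C,\gam_{\fp}X)$ is $\fp$-local (so $r\notin\fp$ acts invertibly) and $\fp$-torsion (so $r\in\fp$ acts locally nilpotently), then iterate over a generating sequence. The only cosmetic difference is that the paper reads off the vanishing directly from the long exact sequence of Hom-modules rather than phrasing it as invertibility of $r$ on the object $\gam_{\fp}X$.
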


\begin{proof} We verify the claim for $\fa=(r)$; an obvious iteration gives the general result.

Fix a point $\fp$ in $\Spec R$ and a compact object $C$ in $\sfT$. Applying the exact functor $\gam_{\fp}$ to the exact triangle \eqref{eq:kos}, and then the functor $\Hom^*_{\sfT}(C,-)$ results in an exact sequence of $R$-modules
\begin{multline*}
\Hom^*_{\sfT}(C,\gam_{\fp}X)\stackrel{\mp
r}\lto\Hom^*_{\sfT}(C,\gam_{\fp}X)[d]\lto\\ \lto
\Hom^*_{\sfT}(C,\gam_{\fp}(\kos Xr))\lto
\Hom^*_{\sfT}(C,\gam_{\fp}X)[1]\stackrel{\pm r}
\lto\Hom^*_{\sfT}(C,\gam_{\fp}X)[d+1]\, .
\end{multline*}
Set $H=\Hom^{*}_{\sfT}(C,\gam_{\fp}X)$. The $R$-module $H$ is $\fp$-local and $\fp$-torsion, see \cite[Corollary 4.10]{\bik:2008a}, and this is used as follows. If $\Hom^{*}_{\sfT}(C,\gam_{\fp}(\kos Xr))\ne 0$ holds, then $H\ne 0$ and $r\in \fp$ since $H$ is $\fp$-local. On the other hand, $H\ne 0$ and $r\in \fp$ implies that $\Hom^{*}_{\sfT}(C,\gam_{\fp}(\kos Xr))\ne 0$ since $H$ is $\fp$-torsion. This implies the desired equality.
\end{proof}

The result below is \cite[Proposition 3.5]{\bik:2008b}, except that
there $\sfG$ is assumed to consist of a single object. The argument is
however the same, so we omit the proof.

\begin{proposition}
\label{prop:comp}
\pushQED{\qed} 
Let $\sfG$ be a set of compact objects which generate
$\sfT$, and let $\mcV$ be a specialization closed subset of $\Spec
R$. For any decomposition $\mcV=\bigcup_{i\in I}\mcV(\fa_{i})$ where
each $\fa_{i}$ is an ideal in $R$, there are equalities
\[
\sfT_\mcV =\Loc_\sfT (\{\kos C{\fa_{i}} \mid C\in\sfG,\,i\in I\})
=\Loc_\sfT (\{\gam_{\mcV(\fa_{i})}C \mid C\in\sfG,\,i\in
I\})\,. \qedhere
\]
\end{proposition}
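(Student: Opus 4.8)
Write $\sfS_{1}=\Loc_\sfT(\{\kos C{\fa_{i}} \mid C\in\sfG,\ i\in I\})$ and $\sfS_{2}=\Loc_\sfT(\{\gam_{\mcV(\fa_{i})}C \mid C\in\sfG,\ i\in I\})$. The plan is to establish the chain $\sfS_{1}\subseteq\sfT_\mcV\subseteq\sfS_{2}\subseteq\sfS_{1}$, which forces the three subcategories to coincide. For $\sfS_{1}\subseteq\sfT_\mcV$ it suffices, since $\sfT_\mcV$ is localizing, to check that each generator $\kos C{\fa_i}$ lies in $\sfT_\mcV$; by Lemma~\ref{lem:kos-props} one has $\supp_R\kos C{\fa_i}=\mcV(\fa_i)\cap\supp_R C\subseteq\mcV$, and since $\sfT_\mcV$ is precisely the class of objects whose support is contained in $\mcV$ \cite{\bik:2008a}, this is clear.

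The inclusion $\sfT_\mcV\subseteq\sfS_{2}$ is the heart of the matter, and I would first reduce it to a statement about the functors $\gam_{\mcV(\fa_i)}$. For any specialization closed $\mcW\subseteq\Spec R$, the full subcategory $\{Y\in\sfT\mid\gam_\mcW Y\in\Loc_\sfT(\{\gam_\mcW C\mid C\in\sfG\})\}$ is localizing, because $\gam_\mcW$ is exact and commutes with coproducts, and it contains the generating set $\sfG$; hence it is all of $\sfT$. Since every object $Y$ of $\sfT_\mcW=\Ker L_\mcW=\Im\gam_\mcW$ is isomorphic to $\gam_\mcW Y$ via the localization triangle \eqref{eq:locseq}, this gives $\sfT_\mcW=\Loc_\sfT(\{\gam_\mcW C\mid C\in\sfG\})$; applied to $\mcW=\mcV(\fa_i)$ it yields $\sfT_{\mcV(\fa_i)}\subseteq\sfS_{2}$ for every $i$. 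It therefore remains to prove $\sfT_\mcV=\Loc_\sfT(\bigcup_{i\in I}\sfT_{\mcV(\fa_i)})$, the inclusion $\supseteq$ being immediate from $\mcV(\fa_i)\subseteq\mcV$. For the reverse I would treat finite subsets $J\subseteq I$ first: writing $\mcV_J=\bigcup_{i\in J}\mcV(\fa_i)$ and inducting on $|J|$ with the help of the Mayer--Vietoris triangle
\[
\gam_{\mcW'\cap\mcW''}X\lto\gam_{\mcW'}X\oplus\gam_{\mcW''}X\lto\gam_{\mcW'\cup\mcW''}X\lto
\]
for specialization closed $\mcW',\mcW''$ \cite{\bik:2008a}, together with the fact that $\gam_{\mcW'\cap\mcW''}X$ lies in $\sfT_{\mcW'\cap\mcW''}\subseteq\sfT_{\mcW''}$, one gets $\sfT_{\mcV_J}\subseteq\Loc_\sfT(\bigcup_{i\in J}\sfT_{\mcV(\fa_i)})$. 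The $\mcV_J$ form a directed family with union $\mcV$, and $\gam_\mcV X$ is the homotopy colimit of the $\gam_{\mcV_J}X$ along the canonical maps (continuity of local cohomology in the defining set, again \cite{\bik:2008a}); hence every $X\in\sfT_\mcV=\Im\gam_\mcV$ lies in $\Loc_\sfT(\bigcup_{i\in I}\sfT_{\mcV(\fa_i)})$, completing this step.

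For the last inclusion $\sfS_{2}\subseteq\sfS_{1}$ it is enough to show $\gam_{\mcV(\fa_i)}C\in\Loc_\sfT(\kos C{\fa_i})$ for each $C\in\sfG$ and $i\in I$. Fixing a finite generating sequence for $\fa_i$, with respect to which $\kos C{\fa_i}$ is formed, an iteration of the octahedral axiom shows that the Koszul object on the sequence of $n$-th powers of its entries belongs to $\Thick_\sfT(\kos C{\fa_i})$ for every $n\geq1$, while $\gam_{\mcV(\fa_i)}C$ is, up to a shift, the homotopy colimit of these Koszul objects --- the stable Koszul description of $\gam_{\mcV(\fa_i)}$ given in \cite[\S5]{\bik:2008a}. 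Thus $\gam_{\mcV(\fa_i)}C\in\Loc_\sfT(\kos C{\fa_i})\subseteq\sfS_{1}$, and the chain of inclusions closes up.

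The step I expect to be the main obstacle is the identity $\sfT_\mcV=\Loc_\sfT(\bigcup_{i\in I}\sfT_{\mcV(\fa_i)})$ inside the second inclusion: the finite case relies on the Mayer--Vietoris triangle for the functors $\gam_\bullet$, and the passage to an arbitrary --- in particular possibly infinite --- index set requires the continuity of $\gam$ under directed unions of specialization closed subsets. Everything else is either formal (manipulation of exact triangles and the behaviour of $\gam_\mcW$ under coproducts) or a direct appeal to Lemma~\ref{lem:kos-props}, the support description of $\sfT_\mcV$, and the stable Koszul computation of $\gam_{\mcV(\fa)}$, all available from \cite{\bik:2008a}.
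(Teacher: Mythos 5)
Your architecture ($\sfS_1\subseteq\sfT_\mcV\subseteq\sfS_2\subseteq\sfS_1$) is sound, and most of it goes through: the first and third inclusions, the reduction of the middle one to $\sfT_\mcV=\Loc_\sfT(\bigcup_{i}\sfT_{\mcV(\fa_i)})$ via the ``localizing subcategory of $Y$ with $\gam_\mcW Y\in\Loc_\sfT(\{\gam_\mcW C\})$'' trick, and the Mayer--Vietoris induction for finite subfamilies are all fine. The gap is exactly where you flagged it: the passage from finite $J$ to all of $I$. You appeal to ``the homotopy colimit of the $\gam_{\mcV_J}X$'' over the directed poset of finite subsets of $I$ and to a ``continuity of local cohomology in the defining set''. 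Neither is available. A bare triangulated category only has \emph{sequential} homotopy colimits (that is the only notion defined here and in \cite{\bik:2008a}), and the poset of finite subsets of $I$ need not admit a cofinal chain --- for the canonical decomposition $\mcV=\bigcup_{\fp\in\mcV}\mcV(\fp)$ the index set is typically uncountable. Moreover \cite{\bik:2008a} contains no statement identifying $\gam_\mcV X$ with such a colimit; even in the countable case, where a cofinal sequence $\mcV_{J_1}\subseteq\mcV_{J_2}\subseteq\cdots$ exists, the identification $\gam_\mcV X\cong\hocolim\gam_{\mcV_{J_n}}X$ is something you would have to prove (it is true --- one checks the cone of $\hocolim\gam_{\mcV_{J_n}}X\to X$ has support disjoint from $\mcV$ --- but the argument does not extend to uncountable $I$).

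The standard repair, which is the argument of \cite[Proposition 3.5]{\bik:2008b} (the paper itself omits the proof and defers to that reference), avoids colimits altogether: since $\sfS_1$ is generated by a set, Lemma~\ref{lem:loc-set} provides a localization functor $L'$ with $\Ker L'=\sfS_1$; for $X\in\sfT_\mcV$ the object $L'X$ again lies in $\sfT_\mcV$, so $\supp_RL'X\subseteq\bigcup_i\mcV(\fa_i)$, while $\Hom^*_\sfT(\kos C{\fa_i},L'X)=0$ for all $C\in\sfG$ and $i\in I$ forces, by the support-detection results of \cite[\S5]{\bik:2008a} together with Lemma~\ref{lem:kos-props}, that $\supp_RL'X$ misses $\mcV(\fa_i)\cap\supp_RC$ for every $C$ and $i$; as $\supp_R\sfT=\bigcup_{C\in\sfG}\supp_RC$, this gives $\supp_RL'X=\emptyset$, hence $L'X=0$ and $X\in\sfS_1$. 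Replacing your colimit step with this closes the argument; the rest of your proof can stand as written (your third inclusion is in any case Proposition~\ref{prop:kosproperties}(1), whose proof is independent of the present statement).
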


An element $r\in R^{d}$ is invertible on an $R$-module $M$ if the map
$M\xra{r}M[d]$ is an isomorphism. In the same vein, we say $r$ is
\emph{invertible} on an object $X$ in $\sfT$ if the natural morphism
$X\xra{r}\Si^{d}X$ is an isomorphism; equivalently, if $\kos Xr$ is
zero.

\begin{lemma}
\label{lem:invertible}
Let $X$ be an object in $\sfT$ and $\mcV\subseteq\Spec R$ a
specialization closed subset. Each element $r\in R$ with
$\mcV(r)\subseteq \mcV$ is invertible on $\bloc_{\mcV}X$, and hence on
the $R$-modules $\Hom^{*}_{\sfT}(\bloc_{\mcV}X,Y)$ and
$\Hom^{*}_{\sfT}(Y,\bloc_{\mcV}X)$, for any object $Y$ in $\sfT$.
\end{lemma}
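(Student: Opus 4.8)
<br>

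The plan is to show that $r$ is invertible on $\bloc_\mcV X$, since the two claims about $\Hom^*$-modules follow immediately: applying $\Hom^*_\sfT(\bloc_\mcV X, -)$ or $\Hom^*_\sfT(-,\bloc_\mcV X)$ to the isomorphism $\bloc_\mcV X \xra{r} \Si^d \bloc_\mcV X$ yields isomorphisms of $R$-modules. So everything reduces to proving $\kos{(\bloc_\mcV X)}{r} = 0$.

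First I would recall that $r$ is invertible on an object $W$ if and only if $\kos W r = 0$, which by Lemma~\ref{lem:kos-props} happens as soon as $\supp_R W$ misses $\mcV(r)$; but rather than go through supports directly, the cleaner route is via the triangle defining $\bloc_\mcV$. Consider the exact triangle $\gam_\mcV X \to X \to \bloc_\mcV X \to$ from \eqref{eq:locseq} and form the octahedron (or simply apply the Koszul construction to the whole triangle): since $\kos{(-)}{r}$ is built from a triangle and the Koszul triangles are functorial up to the usual indeterminacy, there is an exact triangle
\[
\kos{(\gam_\mcV X)}{r} \lto \kos{X}{r} \lto \kos{(\bloc_\mcV X)}{r} \lto\,.
\]
So it suffices to know that $\kos{X}{r} \to \kos{(\bloc_\mcV X)}{r}$ is zero, or better, to identify $\kos{(\bloc_\mcV X)}{r}$ with something manifestly zero. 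The key observation is that $\kos X r$ is a $\mcV(r)$-torsion object — indeed $\kos X r \cong \gam_{\mcV(r)}(\kos X r)$ is false in general, but $\kos X r$ lies in $\sfT_{\mcV(r)}$ by \cite[Lemma 5.11]{\bik:2008a} or directly because its support is contained in $\mcV(r)$ by Lemma~\ref{lem:kos-props}. Since $\mcV(r) \subseteq \mcV$ by hypothesis, $\kos X r$ lies in $\sfT_\mcV = \Ker L_\mcV$. Now apply the Koszul construction to the triangle and use that $L_\mcV \kos X r = 0$: the triangle above shows $\kos{(\bloc_\mcV X)}{r}$ is obtained from $\kos X r$ by applying $L_\mcV$, hence is zero.

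The step I expect to be the main obstacle, and which I would want to pin down carefully, is the commutation of the Koszul construction with the localization triangle — specifically, the claim that $\kos{(\bloc_\mcV X)}{r} \cong \bloc_\mcV(\kos X r)$ (equivalently, that $\kos{(-)}{r}$ commutes with $\bloc_\mcV$ and $\gam_\mcV$). This should follow because $\kos{(-)}{r}$ is (up to a shift and the cone indeterminacy) the cone of an endomorphism that is natural in the object, and $\bloc_\mcV$ is an exact functor; applying an exact functor to the Koszul triangle \eqref{eq:kos} for $X$ produces a Koszul triangle for $\bloc_\mcV X$. Once that is granted, $\bloc_\mcV(\kos X r) = 0$ because $\kos X r \in \sfT_\mcV$, giving $\kos{(\bloc_\mcV X)}{r} = 0$, i.e.\ $r$ is invertible on $\bloc_\mcV X$, as desired.
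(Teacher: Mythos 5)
Your proof is correct and takes essentially the same route as the paper: both arguments reduce to the vanishing of $\bloc_{\mcV}(\kos Xr)$ and then apply the exact functor $\bloc_{\mcV}$ to the Koszul triangle \eqref{eq:kos} to conclude that $r$ acts invertibly on $\bloc_{\mcV}X$. The only cosmetic difference is in how the vanishing is justified — the paper computes $\supp_{R}\bloc_{\mcV}(\kos Xr)=\emptyset$ via Lemma~\ref{lem:kos-props}, while you note directly that $\kos Xr$ lies in $\sfT_{\mcV(r)}\subseteq\sfT_{\mcV}=\Ker\bloc_{\mcV}$ — and these are interchangeable.
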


\begin{proof}
From \cite[Theorem 5.6]{\bik:2008a} and Lemma~\ref{lem:kos-props} one
gets equalities
\[
\supp_{R}\bloc_{\mcV}(\kos Xr)= \mcV(r)\cap \supp_{R}X \cap (\Spec R\setminus
\mcV(r)) = \emptyset\,.
\]
Therefore $\bloc_{\mcV}(\kos Xr)=0$, by \cite[Theorem
5.2]{\bik:2008a}. Applying $\bloc_{\mcV}$ to the exact triangle
\eqref{eq:kos} yields an isomorphism $\bloc_{\mcV}
X\xra{r}\Si^{|r|}\bloc_{\mcV}X$, which is the first part of the
statement. Applying $\Hom^{*}_{\sfT}(-,Y)$ and $\Hom^{*}_{\sfT}(Y,-)$
to it gives the second part.
\end{proof}

\subsection*{Homotopy colimits}
Let $X_{1}\xra{f_{1}} X_2 \xra{f_{2}} X_3 \xra{f_{3}}\cdots$ be a sequence of morphisms in $\sfT$.  Its \emph{homotopy colimit}, denoted $\hocolim X_n$, is defined by an exact triangle
\[
\bigoplus_{n\ges1} X_{n}\stackrel{\theta}\lto \bigoplus_{n\ges1}
X_{n}\lto \hocolim X_{n}\lto
\]
where $\theta$ is the map $(\id-f_{n})$; see \cite{Bokstedt/Neeman:1993a}. 

Now fix a homogeneous element $r\in R$ of degree $d$. For each $X$ in $\sfT$ and each integer $n$ set $X_n=\Si^{nd}X$ and consider the commuting diagram
\[
\xymatrix{
X\ar@{=}[r]\ar[d]^{r^{}} &X\ar@{=}[r]\ar[d]^{r^{2}} &X\ar@{=}[r]\ar[d]^{r^{3}} &\cdots\\
X_1\ar[d]\ar[r]^r        &X_2\ar[d]\ar[r]^r         &X_3\ar[d]\ar[r]^r         &\cdots\\ \kos
Xr\ar[r]                 &\kos Xr^{2}\ar[r]         &\kos Xr^{3}\ar[r]         &\cdots }
\] 
where each vertical sequence is given by the exact triangle defining $\kos Xr^{n}$, and the morphisms in the last row are the (non-canonical) ones induced by the commutativity of the upper squares. The gist of the next result is that the homotopy colimits of the horizontal sequences in the diagram compute $\bloc_{\mcV(r)}X$ and $\gam_{\mcV(r)}X$.

\begin{proposition}
\label{prop:hocolim-local}
Let $r\in R$ be a homogeneous element of degree $d$. For each $X$ in $\sfT$ the adjunction morphisms $X\to L_{\mcV(r)} X$ and $\gam_{\mcV(r)}X\to X$ induce isomorphisms
\[
\hocolim X_n\stackrel{\sim}\lto L_{\mcV(r)}X\quad\text{and}\quad
\hocolim \Si^{-1}(\kos Xr^{n})\stackrel{\sim}\lto \gam_{\mcV(r)}X\, .
\]
\end{proposition}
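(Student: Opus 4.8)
The plan is to assemble, out of the telescope in the diagram above, an exact triangle $X\to\hocolim X_{n}\to\hocolim\kos X{r^{n}}\to$ and then to identify it with the localization triangle \eqref{eq:locseq} for the specialization closed subset $\mcV=\mcV(r)$. Recall the standard facts about this Bousfield localization, see \cite[\S3]{\bik:2008a}: an object $Y$ is $\bloc_{\mcV(r)}$-local if and only if $\gam_{\mcV(r)}Y=0$, and a morphism $X\to Y$ whose cone lies in $\Ker\bloc_{\mcV(r)}$ and whose target is $\bloc_{\mcV(r)}$-local realises $Y$ as $\bloc_{\mcV(r)}X$, compatibly with the adjunction $\Id_{\sfT}\to\bloc_{\mcV(r)}$. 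To build the triangle, note that the maps $r^{n}\col X\to X_{n}$ form a morphism of sequences from the constant sequence on $X$, with identity transition maps, to $(X_{n},r)$, and that the columns of the diagram constitute a ladder of exact triangles $X\xra{r^{n}}X_{n}\to\kos X{r^{n}}\to\Si X$. Since the homotopy colimit of a ladder of exact triangles is again an exact triangle, and the homotopy colimit of the constant sequence on $X$ is $X$ (see \cite{Bokstedt/Neeman:1993a}), one obtains an exact triangle
\[
X\xra{\alpha}\hocolim X_{n}\lto\hocolim\kos X{r^{n}}\lto\Si X
\]
in which $\alpha$ is compatible with the canonical morphisms $X\xra{r^{n}}X_{n}\to\hocolim X_{n}$.

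Next, the cone of $\alpha$ lies in $\Ker\bloc_{\mcV(r)}$. Indeed, by Lemma~\ref{lem:kos-props} one has $\supp_{R}\kos X{r^{n}}=\mcV(r^{n})\cap\supp_{R}X=\mcV(r)\cap\supp_{R}X$, so $\supp_{R}\bloc_{\mcV(r)}(\kos X{r^{n}})=\emptyset$ by \cite[Theorem~5.6]{\bik:2008a}, and hence $\kos X{r^{n}}\in\Ker\bloc_{\mcV(r)}=\sfT_{\mcV(r)}$ by \cite[Theorem~5.2]{\bik:2008a}; as $\sfT_{\mcV(r)}$ is localizing, $\hocolim\kos X{r^{n}}$ belongs to it as well. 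In view of the characterisation recalled above it now suffices to prove that $\hocolim X_{n}$ is $\bloc_{\mcV(r)}$-local, that is, that $\gam_{\mcV(r)}(\hocolim X_{n})=0$. Granting this, $\alpha$ realises $\hocolim X_{n}$ as $\bloc_{\mcV(r)}X$, which gives the first isomorphism; rotating the triangle then exhibits $\hocolim\Si^{-1}(\kos X{r^{n}})=\Si^{-1}\hocolim\kos X{r^{n}}$ as the fibre $\gam_{\mcV(r)}X$, which gives the second. Both isomorphisms are compatible with the respective adjunction morphisms because $\alpha$ is.

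To prove the vanishing, recall from \cite[Corollary~6.5]{\bik:2008a} that $\gam_{\mcV(r)}$ commutes with coproducts, hence with homotopy colimits, so that $\gam_{\mcV(r)}(\hocolim X_{n})\cong\hocolim\gam_{\mcV(r)}X_{n}$ is the homotopy colimit of the sequence $\gam_{\mcV(r)}X\xra{r}\Si^{d}\gam_{\mcV(r)}X\xra{r}\cdots$. For any compact object $C$ the functor $\Hom^{*}_{\sfT}(C,-)$ carries this homotopy colimit to the colimit, in graded $R$-modules, of multiplication by $r$ on $\Hom^{*}_{\sfT}(C,\gam_{\mcV(r)}X)$, which (up to a grading shift) is the localization of $\Hom^{*}_{\sfT}(C,\gam_{\mcV(r)}X)$ obtained by inverting $r$. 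But $\gam_{\mcV(r)}X$ lies in $\sfT_{\mcV(r)}$, so the $R$-module $\Hom^{*}_{\sfT}(C,\gam_{\mcV(r)}X)$ is supported in $\mcV(r)$; equivalently, each of its elements is annihilated by a power of $r$, so inverting $r$ kills it. As $C$ ranges over a generating set of compact objects, this forces $\gam_{\mcV(r)}(\hocolim X_{n})=0$.

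The step I expect to demand the most care is the middle one: verifying that the homotopy-colimit triangle really is the localization triangle \eqref{eq:locseq}, and in particular that the isomorphisms it produces agree with the adjunction morphisms $X\to\bloc_{\mcV(r)}X$ and $\gam_{\mcV(r)}X\to X$ named in the statement. Once that is settled, the vanishing of $\gam_{\mcV(r)}(\hocolim X_{n})$ is a short computation using only the commutation of $\gam_{\mcV(r)}$ with coproducts and the support--torsion dictionary from \cite{\bik:2008a}.
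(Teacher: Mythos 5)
Your argument takes a genuinely different route from the paper's. You first assemble a candidate localization triangle
\[
X\lto\hocolim X_{n}\lto\hocolim\kos X{r^{n}}\lto\Si X
\]
out of the ladder of Koszul triangles, and then identify it with the localization triangle~\eqref{eq:locseq} by checking that the cone is $\mcV(r)$-torsion and that $\hocolim X_{n}$ is $L_{\mcV(r)}$-local. The paper never builds this triangle; it applies $L_{\mcV(r)}$ to the canonical map $X\to\hocolim X_{n}$ and $\gam_{\mcV(r)}$ to the canonical map $\hocolim\Si^{-1}(\kos Xr^{n})\to X$, and checks each time, on compact objects, that the relevant adjunction morphism becomes an isomorphism.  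Your route, if it goes through, has the virtue of producing both isomorphisms simultaneously and exhibiting the triangle rather than just its vertices; the paper's route avoids any $3\times3$-type manipulation and works purely with homological functors on compacts.

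There is, however, a genuine gap at the step \emph{``the homotopy colimit of a ladder of exact triangles is again an exact triangle.''}  This is not a statement one can simply cite from \cite{Bokstedt/Neeman:1993a}; B\"okstedt--Neeman give the triangle defining $\hocolim$ and results about totalizing bicomplexes, but not this.  The difficulty is that the cone is not functorial in a triangulated category: the transition morphisms $\kos Xr^{n}\to\kos Xr^{n+1}$ are only the \emph{non-canonical} ones induced by TR3, and it is not automatic that for the particular choice one has made, the hocolim of the cones coincides with the cone of $X\to\hocolim X_{n}$.  The $3\times3$ lemma only guarantees that \emph{some} compatible choice of connecting maps produces a $3\times3$ diagram whose bottom right corner is simultaneously the two cones; it does not say this holds for an arbitrary prescribed ladder.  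One can repair this — either by running the $3\times3$ argument carefully and letting it dictate the transition maps on the $\kos Xr^{n}$, or by observing that $\Loc_\sfT$ of the hocolim of any such ladder is independent of the choice — but as written the claim is asserted rather than proved.  The rest of your argument, in particular the vanishing $\gam_{\mcV(r)}(\hocolim X_{n})=0$ via the $(r)$-torsion colimit computation and the torsion property of the $\kos Xr^{n}$, is correct and is essentially the same computation that appears in the paper.
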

\begin{proof}
Applying the functor $\gam_{\mcV(r)}$ to the middle row of the diagram above yields a sequence of morphisms
$\gam_{\mcV(r)}X_{1} \to \gam_{\mcV(r)}X_{2} \to \cdots$. For each compact object $C$ in $\sfT$, this  induces a sequence of morphisms of $R$-modules
\[
\Hom^{*}_{\sfT}(C,\gam_{\mcV(r)}X_{1})\stackrel{g_{1}}\lto 
\Hom^{*}_{\sfT}(C,\gam_{\mcV(r)}X_{2})\stackrel{g_{2}}\lto \cdots
\]
Each $R$-module $\Hom_{\sfT}^{*}(C,\gam_{\mcV(r)}X_n)$ is $(r)$-torsion and, identifying this module with $\Hom_{\sfT}^{*}(C,\gam_{\mcV(r)}X)[nd]$, the map $g_{n}$ is given by multiplication with $r$. Thus the colimit of the sequence above, in the category of $R$-modules, satisfies:
\begin{equation}
\label{eq:colim}
\colim \Hom_\sfT^*(C,\gam_{\mcV(r)} X_n) =0
\end{equation}

Applying the functor $\bloc_{\mcV(r)}$ to the canonical morphism $\phi\col X\to\hocolim X_n$ yields the following commutative square.
\[
\xymatrix{ 
X\ar[rr]^-\phi\ar[d]_{\eta X} &&\hocolim X_n\ar[d]^{\eta\hocolim X_n}\\ 
\bloc_{\mcV(r)} X\ar[rr]^-{\bloc_{\mcV(r)}\phi}&&\bloc_{\mcV(r)}\hocolim X_n}
\]
The morphism $\eta \hocolim X_n$ is an isomorphism since $\gam_{\mcV(r)}\hocolim X_n=0$. The equality holds because, for each compact object $C$, there is a chain of isomorphisms
\begin{align*}
\Hom_\sfT^*(C,\gam_{\mcV(r)}\hocolim X_n) 
&\cong \Hom_\sfT^*(C,\hocolim\gam_{\mcV(r)} X_n)\\
&\cong \colim\Hom_\sfT^*(C,\gam_{\mcV(r)} X_n)\\
&\cong 0
\end{align*}
where the second one holds because $C$ is compact and the last one is by \eqref{eq:colim}.

On the other hand, $\bloc_{\mcV(r)}\phi$ is an isomorphism, since
\[
\bloc_{\mcV(r)}\hocolim X_n\cong \hocolim \bloc_{\mcV(r)}X_n
\]
and $r$ is invertible on $\bloc_{\mcV(r)}X$, by Lemma~\ref{lem:invertible}.  Thus $\hocolim X_n\cong
\bloc_{\mcV(r)}X$.

Now consider the canonical morphism $\psi\colon \hocolim \Si^{-1}(\kos Xr^{n})\to X$. Applying the functor $\gam_{\mcV(r)}$ to it yields a commutative square:
\[
\xymatrix{ 
\gam_{\mcV(r)}\hocolim \Si^{-1}(\kos Xr^{n}) \ar[d]_-{\theta \hocolim \Si^{-1}(\kos Xr^{n})}
     \ar[rr]^-{\gam_{\mcV(r)}\psi}&&\gam_{\mcV(r)}X\ar[d]^-{\theta X}\\ 
 \hocolim \Si^{-1}(\kos Xr^{n})\ar[rr]^-{\psi}&&X}
\]
By \cite[Lemma~5.11]{\bik:2008a}, each $\kos Xr^n$ is in $\sfT_{\mcV(r)}$ and hence so is $\hocolim \Si^{-1}(\kos Xr^{n})$. Thus the morphism $\theta \hocolim \Si^{-1}(\kos Xr^{n})$ is an isomorphism. It remains to
show that $\gam_{\mcV(r)}\psi$ is an isomorphism; equivalently, that the map $\Hom_\sfT^*(C,\gam_{\mcV(r)}\psi)$ is an isomorphism for each compact object $C$.  

The exact triangle $X\to X_n\to\kos Xr^n\to$ induces an exact sequence of $R$-modules:
\begin{multline*}
\Hom_\sfT^*(C,\Si^{-1}\gam_{\mcV(r)}X_n)\lto\Hom_\sfT^*(C,\Si^{-1}(\gam_{\mcV(r)} \kos Xr^{n}))\lto\\ 
\lto \Hom_\sfT^*(C,\gam_{\mcV(r)}X)\lto\Hom_\sfT^*(C,\gam_{\mcV(r)}X_n)
\end{multline*}
In view of \eqref{eq:colim}, passing to their colimits yields that $\Hom_\sfT^*(C,\gam_{\mcV(r)}\psi)$ is an isomorphism, as desired.
\end{proof}

\begin{proposition}
\label{prop:kosproperties}
Let $\fa$ be an ideal in $R$. For each object $X$ in $\sfT$ the
following statements hold:
\begin{enumerate}[\quad\rm(1)]
\item $\kos X{\fa}$ is in $\Thick_{\sfT}(\gam_{\mcV({\fa})}X)$ and
  $\gam_{\mcV({\fa})}X$ is in $\Loc_{\sfT}(\kos X{\fa})$;
\item $\Loc_{\sfT}(\kos X{\fa})= \Loc_{\sfT}(\gam_{\mcV({\fa})}X)$;
\item $\gam_{\mcV(\fa)}X$ and $L_{\mcV(\fa)}X$ are in
  $\Loc_{\sfT}(X)$.
\end{enumerate}
\end{proposition}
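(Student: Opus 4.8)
The plan is to reduce everything to the case of a principal ideal $\fa=(r)$ and iterate over a generating sequence; statements (2) and (3) are then essentially formal. For $\fa=(r)$, consider first the containment $\gam_{\mcV(r)}X\in\Loc_{\sfT}(\kos X r)$. By Proposition~\ref{prop:hocolim-local} one has $\gam_{\mcV(r)}X\cong\hocolim\Si^{-1}(\kos X{r^{n}})$, and the defining triangle of this homotopy colimit has first two terms $\bigoplus_{n}\Si^{-1}(\kos X{r^{n}})$, each a coproduct of terms of the sequence; hence it suffices to show $\kos X{r^{n}}\in\Loc_{\sfT}(\kos X r)$ for all $n$. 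In fact $\kos X{r^{n}}\in\Thick_{\sfT}(\kos X r)$: factoring $X\xra{r^{n}}\Si^{nd}X$ as $X\xra{r^{n-1}}\Si^{(n-1)d}X\xra{r}\Si^{nd}X$ and applying the octahedral axiom yields an exact triangle $\kos X{r^{n-1}}\to\kos X{r^{n}}\to\Si^{(n-1)d}(\kos X r)\to$, so induction on $n$ applies. For the reverse containment $\kos X r\in\Thick_{\sfT}(\gam_{\mcV(r)}X)$, apply the exact functor $\gam_{\mcV(r)}$ to the defining triangle \eqref{eq:kos}: since $\kos X r\in\sfT_{\mcV(r)}$ by \cite[Lemma~5.11]{\bik:2008a} (or Lemma~\ref{lem:kos-props}), one has $\gam_{\mcV(r)}(\kos X r)\cong\kos X r$, and the resulting triangle realizes $\kos X r$ as the cone of a morphism $\gam_{\mcV(r)}X\to\Si^{d}\gam_{\mcV(r)}X$.

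For a general ideal $\fa=(r_{1},\dots,r_{c})$, write $\fb=(r_{1},\dots,r_{c-1})$, so that a Koszul object $\kos X\fa$ may be taken to be $\kos Y{r_{c}}$ with $Y=\kos X\fb$, and $\mcV(\fa)=\mcV(\fb)\cap\mcV(r_{c})$. The principal case applied to $Y$ and $r_{c}$ gives $\kos X\fa\in\Thick_{\sfT}(\gam_{\mcV(r_{c})}Y)$ and $\gam_{\mcV(r_{c})}Y\in\Loc_{\sfT}(\kos X\fa)$; the inductive hypothesis on $c$ gives $\kos X\fb\in\Thick_{\sfT}(\gam_{\mcV(\fb)}X)$ and $\gam_{\mcV(\fb)}X\in\Loc_{\sfT}(\kos X\fb)$. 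Since $\gam_{\mcV(r_{c})}$ is exact and commutes with coproducts, it carries the last two containments to $\gam_{\mcV(r_{c})}(\kos X\fb)\in\Thick_{\sfT}(\gam_{\mcV(r_{c})}\gam_{\mcV(\fb)}X)$ and $\gam_{\mcV(r_{c})}\gam_{\mcV(\fb)}X\in\Loc_{\sfT}(\gam_{\mcV(r_{c})}(\kos X\fb))$; invoking the composition rule $\gam_{\mcV(r_{c})}\gam_{\mcV(\fb)}\cong\gam_{\mcV(r_{c})\cap\mcV(\fb)}=\gam_{\mcV(\fa)}$ for local cohomology functors (from \cite{\bik:2008a}) and chaining everything proves (1) in general. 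Statement (2) is then immediate, since (1) yields $\kos X\fa\in\Loc_{\sfT}(\gam_{\mcV(\fa)}X)$ and $\gam_{\mcV(\fa)}X\in\Loc_{\sfT}(\kos X\fa)$. For (3), the triangles \eqref{eq:kos} used to assemble $\kos X\fa$ show $\kos X\fa\in\Thick_{\sfT}(X)$, so $\gam_{\mcV(\fa)}X\in\Loc_{\sfT}(\kos X\fa)\subseteq\Loc_{\sfT}(X)$ by (1), and then the localization triangle \eqref{eq:locseq} for $\mcV(\fa)$ forces $L_{\mcV(\fa)}X\in\Loc_{\sfT}(X)$ too.

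The main obstacle is the passage from principal ideals to arbitrary $\fa$: the object $\kos X\fa$ is built by iterating the construction $\kos{(-)}{r_{i}}$, but $\gam_{\mcV(\fa)}X$ is \emph{not} obtained by naively iterating the $\gam_{\mcV(r_{i})}$ unless one knows $\gam_{\mcV}\gam_{\mcW}\cong\gam_{\mcV\cap\mcW}$, and keeping the auxiliary objects $\gam_{\mcV(r_{c})}(\kos X\fb)$ aligned with $\gam_{\mcV(\fa)}X$ up to thick and localizing equivalence is where care is needed; the individual ingredients (the octahedral triangle, the torsion property of $\kos X r$, and the preservation of $\Thick$ and $\Loc$ containments under exact coproduct-preserving functors) are all routine.
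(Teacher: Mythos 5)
Your proposal is correct and follows essentially the same route as the paper: the octahedral-axiom claim that $\kos X{a^{n}}\in\Thick_{\sfT}(\kos Xa)$, the homotopy colimit description of $\gam_{\mcV(r)}X$ from Proposition~\ref{prop:hocolim-local}, and the induction over a generating sequence using $\gam_{\mcV(a_{n})}\gam_{\mcV(\fa')}\cong\gam_{\mcV(\fa)}$ are exactly the paper's ingredients. The only cosmetic difference is in the first claim of (1), where the paper argues in one step for general $\fa$ (apply $\gam_{\mcV(\fa)}$ to $\kos X{\fa}\in\Thick_{\sfT}(X)$ and use $\gam_{\mcV(\fa)}(\kos X{\fa})\cong\kos X{\fa}$) rather than by induction via the principal case.
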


\begin{proof}
(1) By construction $\kos X{\fa}$ is in $\Thick_{\sfT}(X)$. As $\gam_{\mcV(\fa)}$ is an exact functor, one obtains that $\gam_{\mcV(\fa)}(\kos X{\fa})$ is in $\Thick_{\sfT}(\gam_{\mcV(\fa)}X)$. This justifies the first claim in (1), since $\kos X{\fa}$ is in $\sfT_{\mcV(\fa)}$ by \cite[Lemma~5.11]{\bik:2008a}.

Now we verify that $\gam_{\mcV(\fa)}X$ is in the localizing subcategory generated by $\kos X{\fa}$.

Consider the case where $\fa$ is generated by a single element, say $a$. 

\smallskip
\textit{Claim}: $\kos X{a^{n}}$ is in $\Thick_{\sfT}(\kos Xa)$, for each $n\geq 1$.
\smallskip

Indeed, this is clear for $n=1$. For any $n\geq 1$, the composition of maps
\[
X\stackrel{a^{n}}\lto \Si^{n|a|}X \stackrel{a}\lto \Si^{(n+1)|a|}X
\]
yields, by the octahedral axiom, an exact triangle
\[
\kos{X}{a^{n}}\lto \kos{X}{a^{n+1}}\lto \Si^{n|a|}\kos{X}{a}\lto\,.
\]
Thus, when $\kos{X}{a^{n}}$ is in $\Thick_{\sfT}(\kos Xa)$, so is $\kos{X}{a^{n+1}}$. This justifies the claim.

It follows from Proposition~\ref{prop:hocolim-local} that $\gam_{\mcV(a)}X$ is the homotopy colimit of objects $\Si^{-1}\kos X{a^{n}}$, and hence in $\Loc_{\sfT}(\kos Xa)$, by the claim above.

Now suppose $\fa=(a_1,\ldots,a_n)$, and set $\fa'=(a_{1},\dots,a_{n-1})$.  Then the equality $\mcV(\fa)=\mcV(a_1)\cap\mcV(\fa')$ yields $\gam_{\mcV(\fa)}=\gam_{\mcV(a_n)} \gam_{\mcV(\fa')}$ by \cite[Proposition~6.1]{\bik:2008a}.  By induction on $n$ one may assume that $\gam_{\mcV(\fa')}X$ is in $\Loc_{\sfT}(\kos X{\fa'})$. Therefore $\gam_{\mcV(\fa)}X$ is in $\Loc_{\sfT}(\gam_{\mcV(a_{n})}(\kos X{\fa'}))$. The basis of the induction implies that $\gam_{\mcV(a_{n})}(\kos X{\fa'})$ is in the localizing subcategory generated by $\kos{(\kos X{\fa'})}a_{n}$, that
is to say, by $\kos X{\fa}$. Therefore, $\gam_{\mcV(\fa)}X$ is in $\Loc_{\sfT}(\kos X{\fa})$, as claimed.

(2) is an immediate consequence of (1).

(3) Since $\kos X{\fa}$ is in $\Thick_{\sfT}(X)$, it follows from (1)
that $\gam_{\mcV(\fa)}X$ is in $\Loc_{\sfT}(X)$. The localization
triangle \eqref{eq:locseq} then yields that $\bloc_{\mcV(\fa)}X$ is
also in $\Loc_{\sfT}(X)$.
\end{proof}

\section{A local-global principle}
\label{sec:A local-global principle}
We introduce a local-global principle for $\sfT$ and explain how, when
it holds, the problem of classifying the localizing subcategories can
be reduced to one of classifying localizing subcategories supported at
a single point in $\Spec R$.

Recall that $\sfT$ is a compactly generated $R$-linear triangulated
category. If for each object $X$ in $\sfT$ there is an equality
\[
\Loc_\sfT(X)=\Loc_{\sfT}(\{\gam_{\fp}X\mid \fp\in\Spec R\})
\]
we say that the \emph{local-global principle holds} for $\sfT$.

\begin{theorem}
\label{thm:localglobal}
Let $\sfT$ be a compactly generated $R$-linear triangulated
category. The local-global principle is equivalent to each of the
following statements.
\begin{enumerate}[{\quad\rm(1)}]
\item For any $X\in\sfT$ and any localizing subcategory $\sfS$ of
  $\sfT$, one has
\[
X\in \sfS \iff \gam_{\fp}X\in \sfS \text{ for each }\fp\in\Spec R\,.
\]
\item For any $X\in\sfT$, one has $X\in \Loc_{\sfT}(\{\gam_{\fp}X\mid
  \fp\in\Spec R\})$.
\item For any $X\in\sfT$ and any specialization closed subset $\mcV$
  of $\Spec R$, one has
\[
\gam_{\mcV}X\in \Loc_{\sfT}(\{\gam_{\fp}X\mid \fp\in\mcV\})\,.
\]
\item For any $X\in\sfT$, one has
  $\Loc_\sfT(X)=\Loc_{\sfT}(\{X_{\fp}\mid \fp\in\Spec R\})$.
\item For any $X\in\sfT$ and any localizing subcategory $\sfS$ of
  $\sfT$, one has
\[
X\in \sfS \iff X_{\fp}\in \sfS \text{ for each }\fp\in\Spec R\,.
\]
\item For any $X\in\sfT$, one has $X\in \Loc_{\sfT}(\{X_{\fp}\mid
  \fp\in\Spec R\})$.
\end{enumerate}
\end{theorem}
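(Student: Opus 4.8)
The plan is to prove the equivalence of all six statements by establishing a cycle of implications, together with the obvious symmetry between the $\gam_{\fp}$-flavoured conditions $(1)$--$(3)$ and their localization counterparts $(4)$--$(6)$. First I would observe that the local-global principle, as stated, is literally the assertion that $\Loc_\sfT(X)=\Loc_\sfT(\{\gam_\fp X\mid \fp\in\Spec R\})$ for all $X$, and this trivially implies $(2)$ since $X\in\Loc_\sfT(X)$. Conversely, $(2)$ implies the local-global principle: one inclusion is immediate, and for the other, if $X\in\Loc_\sfT(\{\gam_\fp X\})$ then also $\gam_\fq X\in\Loc_\sfT(\{\gam_\fp X\})$ for every $\fq$ (apply the exact, coproduct-preserving functor $\gam_\fq$, noting $\gam_\fq\gam_\fp X$ is either $0$ or $\cong\gam_\fp X$ by \cite[Theorem~5.6]{\bik:2008a}), so the reverse containment holds as well. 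The implication $(1)\Rightarrow(2)$ is obtained by taking $\sfS=\Loc_\sfT(\{\gam_\fp X\mid\fp\in\Spec R\})$, since each $\gam_\fq X$ lies in $\sfS$ by construction, hence $X\in\sfS$. For $(2)\Rightarrow(1)$: the direction $X\in\sfS\Rightarrow\gam_\fp X\in\sfS$ is automatic because $\sfS$ is a localizing subcategory and $\gam_\fp X\in\Loc_\sfT(X)$ by Proposition~\ref{prop:kosproperties}(3) (applied with $\mcV=\mcV(\fp)$, combined with the definition $\gam_\fp X=\gam_{\mcV(\fp)}(X_\fp)$ and the fact that $X_\fp=L_{\mcZ(\fp)}X\in\Loc_\sfT(X)$); the converse direction is exactly $(2)$.

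Next I would insert $(3)$ into this cycle. Clearly $(3)\Rightarrow(2)$ by taking $\mcV=\Spec R$, whereupon $\gam_{\Spec R}X\cong X$. For the reverse, assuming $(2)$, I would apply the exact coproduct-preserving functor $\gam_\mcV$ to the containment $X\in\Loc_\sfT(\{\gam_\fp X\mid\fp\in\Spec R\})$ to conclude $\gam_\mcV X\in\Loc_\sfT(\{\gam_\mcV\gam_\fp X\mid\fp\in\Spec R\})$. By Lemma~\ref{lem:gamp} (or directly from \cite[Theorem~5.6]{\bik:2008a}), $\gam_\mcV\gam_\fp X$ is $\cong\gam_\fp X$ when $\fp\in\mcV$ and $0$ otherwise, so the right-hand side is $\Loc_\sfT(\{\gam_\fp X\mid\fp\in\mcV\})$, giving $(3)$. (Here I should double-check that $\gam_\mcV$ commutes with $\gam_\fp$; this is \cite[Proposition~6.1]{\bik:2008a} together with the torsion/local decomposition.)

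The remaining task is to match up $(1)$--$(3)$ with $(4)$--$(6)$. The cleanest route is to prove $(1)\Leftrightarrow(5)$, $(2)\Leftrightarrow(6)$, and $(4)\Leftrightarrow$ local-global principle by the same formal manipulations used above, but with $\gam_\fp$ replaced by $(-)_\fp=L_{\mcZ(\fp)}(-)$; the key structural inputs are that $(-)_\fp$ is exact and preserves coproducts, that $X_\fp\in\Loc_\sfT(X)$, and that $X\in\Loc_\sfT(\{\gam_\fp X\})$ if and only if $X\in\Loc_\sfT(\{X_\fp\})$. This last biconditional is the linchpin: in one direction, $\gam_\fp X=\gam_{\mcV(\fp)}(X_\fp)\in\Loc_\sfT(X_\fp)$ by Proposition~\ref{prop:kosproperties}(3), so $\Loc_\sfT(\{\gam_\fp X\})\subseteq\Loc_\sfT(\{X_\fp\})$; in the other direction, I would need to show each $X_\fp$ lies in $\Loc_\sfT(\{\gam_\fq X\mid\fq\in\Spec R\})$. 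Here I would write the localization triangle $\gam_{\mcZ(\fp)}X\to X\to X_\fp\to$, note $\gam_{\mcZ(\fp)}X\in\Loc_\sfT(X)$, and then argue — using $(3)$ or its analogue for $\mcZ(\fp)$ — that $\gam_{\mcZ(\fp)}X\in\Loc_\sfT(\{\gam_\fq X\mid\fq\in\mcZ(\fp)\})$, while $X\in\Loc_\sfT(\{\gam_\fq X\mid\fq\in\Spec R\})$ under the local-global hypothesis; combining these in the triangle shows $X_\fp$ lies in the localizing subcategory generated by the $\gam_\fq X$.

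I expect the main obstacle to be keeping the bookkeeping honest across the two families of conditions, and in particular verifying cleanly that $X\in\Loc_\sfT(\{X_\fp\mid\fp\in\Spec R\})$ forces $X_\fq$ into $\Loc_\sfT(\{X_\fp\})$ for all $\fq$ — the functor $(-)_\fq$ does not obviously fix the generators the way $\gam_\fq$ does (one has $(X_\fp)_\fq\cong X_\fp$ when $\fp\subseteq\fq$ but this need not be $0$ otherwise, only $\fp$-local). The resolution is that $(X_\fp)_\fq$ is always in $\Loc_\sfT(X_\fp)$ regardless, which suffices for the inclusion of localizing subcategories; so the argument goes through, but the naive "permutation of generators" reasoning must be replaced by the weaker but adequate "each image lies in the localizing subcategory generated by the originals." Once this is pinned down, all six statements close into a single web of equivalences.
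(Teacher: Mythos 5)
Your handling of the first group of conditions is mostly in line with the paper's, but there are two genuine gaps. First, you repeatedly invoke ``the fact that $X_{\fp}=L_{\mcZ(\fp)}X\in\Loc_{\sfT}(X)$'' as if it were automatic. It is not: Proposition~\ref{prop:kosproperties}(3) only gives $L_{\mcV(\fa)}X\in\Loc_{\sfT}(X)$ for \emph{closed} subsets $\mcV(\fa)$, whereas $\mcZ(\fp)$ is merely specialization closed (an infinite union of closed sets in general). Establishing $L_{\mcV}X\in\Loc_{\sfT}(X)$ for arbitrary specialization closed $\mcV$ is precisely Lemma~\ref{lem:lchlocal}, which the paper proves by a noetherian induction on primes \emph{and which takes condition (3) as a hypothesis}. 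This also undermines your argument that (2) implies the local-global principle: for the containment $\Loc_{\sfT}(\{\gam_{\fp}X\})\subseteq\Loc_{\sfT}(X)$ you must show $\gam_{\fq}X\in\Loc_{\sfT}(X)$, but applying $\gam_{\fq}$ to $X\in\Loc_{\sfT}(\{\gam_{\fp}X\})$ only yields $\gam_{\fq}X\in\Loc_{\sfT}(\{\gam_{\fp}X\})$, which is vacuously true and proves nothing. The paper routes this through $(2)\Rightarrow(3)\Rightarrow(1)$, using Lemma~\ref{lem:lchlocal} in the last step; you cannot shortcut it.

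Second, your web of equivalences never closes into a cycle. You correctly get $\Loc_{\sfT}(\{\gam_{\fp}X\})\subseteq\Loc_{\sfT}(\{X_{\fp}\})$ from $\gam_{\fp}X=\gam_{\mcV(\fp)}(X_{\fp})$, which passes from conditions (1)--(3) to (4)--(6); but your argument for the reverse passage explicitly assumes ``the local-global hypothesis,'' i.e., it is circular when used to prove $(6)\Rightarrow(2)$. This return implication is the one genuinely non-formal step of the theorem, and the paper's proof of it uses an idea absent from your proposal: since $\supp_{R}X_{\fp}\subseteq\{\fq\mid\fq\subseteq\fp\}$ and $R$ is noetherian, this support has finite dimension, so Theorem~\ref{thm:findim} applies to give $X_{\fp}\in\Loc_{\sfT}(\{\gam_{\fq}X_{\fp}\mid\fq\in\Spec R\})$ unconditionally, and Lemma~\ref{lem:gamp} then identifies $\gam_{\fq}X_{\fp}$ with $\gam_{\fq}X$. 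Without this finite-dimensionality input (or some substitute), condition (6) gives you no access back to the $\gam_{\fp}$-conditions, and the six statements remain split into two groups.
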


{}The proof uses some results which may also be useful elsewhere.

\begin{lemma}
\label{lem:lchlocal}
Let $X$ be an object in $\sfT$. Suppose that for any specialization
closed subset $\mcV$ of $\Spec R$, one has
\[
\gam_{\mcV}X\in \Loc_{\sfT}(\{\gam_{\fp}X\mid \fp\in\mcV\})\,.
\]
Then $\gam_{\mcV}X$ and $\bloc_\mcV X$ belong to $\Loc_{\sfT}(X)$ for
every specialization closed $\mcV\subseteq \Spec R$.
\end{lemma}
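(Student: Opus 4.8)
The plan is to use the hypothesis \emph{only} to reduce to the single-prime case, and then argue — without any further appeal to the hypothesis — that $\gam_{\fp}X$ lies in $\Loc_{\sfT}(X)$ for every $\fp\in\Spec R$; the main tool for this is the homotopy colimit description of the localization functors $\bloc_{\mcV(r)}$ from Proposition~\ref{prop:hocolim-local}.

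For the reduction, suppose for a moment that we know $\gam_{\mcV}X\in\Loc_{\sfT}(X)$ for every specialization closed $\mcV$. Then the localization triangle $\gam_{\mcV}X\to X\to\bloc_{\mcV}X\to$ shows $\bloc_{\mcV}X\in\Loc_{\sfT}(X)$ as well, so it is enough to handle $\gam_{\mcV}X$. The hypothesis gives $\gam_{\mcV}X\in\Loc_{\sfT}(\{\gam_{\fp}X\mid\fp\in\mcV\})$, so — and the hypothesis plays no further role — everything comes down to proving that $\gam_{\fp}X\in\Loc_{\sfT}(X)$ for each individual $\fp\in\Spec R$.

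To prove this, write $\gam_{\fp}X=\gam_{\mcV(\fp)}(X_{\fp})$ with $X_{\fp}=\bloc_{\mcZ(\fp)}X$. Since $R$ is noetherian, $\fp$ is finitely generated, so Proposition~\ref{prop:kosproperties}(3), applied to the object $X_{\fp}$ and the ideal $\fp$, yields $\gam_{\fp}X\in\Loc_{\sfT}(X_{\fp})$; hence it is enough to show $X_{\fp}\in\Loc_{\sfT}(X)$. Let $S$ denote the multiplicatively closed set of homogeneous elements of $R$ not lying in $\fp$; ordered by divisibility it is a directed set, and $\mcZ(\fp)=\bigcup_{r\in S}\mcV(r)$ (given $\fq\not\subseteq\fp$, the prime $\fq$ being homogeneous, some homogeneous element lies in $\fq\setminus\fp$). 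For $r\in S$ one has $\mcV(r)\subseteq\mcZ(\fp)$, and for $r\mid r'$ in $S$, writing $r'=rt$, one has $\mcV(r)\subseteq\mcV(r')$ because $\mcV(rt)=\mcV(r)\cup\mcV(t)$; consequently $\sfT_{\mcV(r)}\subseteq\sfT_{\mcV(r')}\subseteq\sfT_{\mcZ(\fp)}$, and the universal property of localization functors provides a compatible family of morphisms $\bloc_{\mcV(r)}X\to\bloc_{\mcV(r')}X$ and $\bloc_{\mcV(r)}X\to\bloc_{\mcZ(\fp)}X=X_{\fp}$. I would then check that the induced morphism $\hocolim_{r\in S}\bloc_{\mcV(r)}X\to X_{\fp}$ is an isomorphism by testing against a compact object $C$: by Proposition~\ref{prop:hocolim-local} the $R$-module $\Hom^{*}_{\sfT}(C,\bloc_{\mcV(r)}X)$ is the localization of $\Hom^{*}_{\sfT}(C,X)$ obtained by inverting $r$, so passing to the colimit over $S$ gives $\Hom^{*}_{\sfT}(C,X)_{\fp}$, which by Proposition~\ref{prop:cohlocal} is precisely $\Hom^{*}_{\sfT}(C,X_{\fp})$; and one checks the comparison is the canonical map. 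Since $\sfT$ is compactly generated, a morphism that is an isomorphism on $\Hom^{*}_{\sfT}(C,-)$ for every compact $C$ is itself an isomorphism. Finally each $\bloc_{\mcV(r)}X$ lies in $\Loc_{\sfT}(X)$ by Proposition~\ref{prop:kosproperties}(3), and $\Loc_{\sfT}(X)$ is closed under homotopy colimits, so $X_{\fp}\in\Loc_{\sfT}(X)$, as required.

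The step I expect to be the main obstacle is the identification of $X_{\fp}$ with the homotopy colimit of the $\bloc_{\mcV(r)}X$ over the — possibly uncountable — directed set $S$: one must know that this filtered homotopy colimit exists in $\sfT$, commutes with $\Hom_{\sfT}(C,-)$ for $C$ compact, and lies in every localizing subcategory containing its terms. This is routine in the present setting, since the construction and these properties reduce formally to coproducts and the sequential homotopy colimits already in use (compare the proof of Proposition~\ref{prop:hocolim-local}), but it is the only point of the argument that is not purely formal; should the statement $X_{\fp}\in\Loc_{\sfT}(X)$ already be available from \cite{\bik:2008a}, one may invoke it directly and omit this computation.
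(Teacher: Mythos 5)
Your reduction to the single statement ``$\gam_{\fp}X\in\Loc_{\sfT}(X)$ for every $\fp$'' is fine, but the second half of your argument claims to prove this \emph{unconditionally}, never using the hypothesis again, and that is where it breaks. The crux is the identification $X_{\fp}\cong\hocolim_{r\in S}\bloc_{\mcV(r)}X$ over the directed set $S$ of homogeneous elements outside $\fp$. In a bare triangulated category, homotopy colimits over an arbitrary (typically uncountable, non-sequential) directed poset are simply not available: the only construction at hand is the cone of a map between coproducts, and for a non-sequential filtered diagram the relevant map $\bigoplus_{r\le r'}\Hom^{*}_{\sfT}(C,\bloc_{\mcV(r)}X)\to\bigoplus_{r}\Hom^{*}_{\sfT}(C,\bloc_{\mcV(r)}X)$ is not injective (unlike $\id-\shift$ in the sequential case --- already a chain $r\le s\le t$ produces kernel elements), so its cone does not compute $\colim_{r}\Hom^{*}_{\sfT}(C,-)$; one would need the full bar resolution, which has no incarnation inside $\sfT$, or a countable cofinal chain in $S$, which need not exist. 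So the step you yourself flag as the ``main obstacle'' is not routine; it is a genuine gap, and it is exactly the point the lemma's hypothesis exists to circumvent. Note also that if your argument worked it would give $X_{\fp}\in\Loc_{\sfT}(X)$, hence $\gam_{\fp}X\in\Loc_{\sfT}(X)$, for every object and every prime with no hypothesis at all, rendering this lemma and its use in Theorem~\ref{thm:localglobal} (3)$\Rightarrow$(4) vacuous; the paper never asserts this, and Proposition~\ref{prop:kosproperties}(3) deliberately covers only Zariski-closed sets $\mcV(\fa)$, not general specialization-closed sets such as $\mcZ(\fp)$.

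The paper's proof instead uses the hypothesis essentially, via a noetherian induction: supposing some $\gam_{\fp}X\notin\Loc_{\sfT}(X)$, choose $\fp$ maximal with this property, set $\mcW=\mcV(\fp)\setminus\{\fp\}$, and apply the hypothesis to the specialization-closed set $\mcW$ to get $\gam_{\mcW}X\in\Loc_{\sfT}(\{\gam_{\fq}X\mid\fq\in\mcW\})\subseteq\Loc_{\sfT}(X)$, the inclusion holding by maximality of $\fp$; since $\gam_{\mcV(\fp)}X\in\Loc_{\sfT}(X)$ by Proposition~\ref{prop:kosproperties}, the triangle $\gam_{\mcW}X\to\gam_{\mcV(\fp)}X\to\gam_{\fp}X\to$ forces $\gam_{\fp}X\in\Loc_{\sfT}(X)$, a contradiction. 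You should adopt an argument of this kind, or else supply an honest construction of the filtered homotopy colimit, which would require structure on $\sfT$ beyond what the paper assumes.
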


\begin{proof}
It suffices to prove that $\gam_{\fp}X$ is in $\Loc_{\sfT}(X)$ for
each $\fp$, that is to say, that the set $\mcU =\{\fp\in\Spec
R\mid\gam_{\fp}X\not\in \Loc_{\sfT}(X)\}$ is empty. Assume $\mcU$ is
not empty and choose a maximal element, say $\fp$, with respect to
inclusion. This is possible since $R$ is noetherian. Set
$\mcW=\mcV(\fp)\setminus\{\fp\}$, and consider the localization
triangle
\[
\gam_{\mcW}X \lto \gam_{\mcV(\fp)}X\lto \gam_{\fp}X\lto
\] 
of $\gam_{\mcV(\fp)}X$ with respect to $\mcW$. The hypothesis implies
the first inclusion below
\[
\gam_{\mcW}X\in \Loc_{\sfT}(\{\gam_{\fq}X\mid \fq\in\mcW\})\subseteq
\Loc_{\sfT}(X)\,,
\]
and the second one follows from the choice of $\fp$. The object
$\gam_{\mcV(\fp)}X$ is in $\Loc_{\sfT}(X)$, by
Proposition~\ref{prop:kosproperties}, so the exact triangle above
yields that $\gam_{\fp}X$ is in $\Loc_{\sfT}(X)$. This contradicts
the choice of $\fp$, and hence $\mcU=\emptyset$, as desired.
\end{proof}

\subsection*{Finite dimension}
The \emph{dimension} of a subset $\mcU$ of $\Spec R$, denoted
$\dim\mcU$, is the supremum of all integers $n$ such that there exists
a chain $\fp_0\subsetneq\fp_1\subsetneq\cdots \subsetneq\fp_n$ in
$\mcU$.  The set $\mcU$ is called \emph{discrete} if $\dim\mcU=0$.

\begin{proposition}
\label{prop:discrete}
Let $X$ be an object of $\sfT$ and set $\mcU=\supp_RX$. If $\mcU$ is
discrete, then there are natural isomorphisms
\[
X \stackrel{\sim}\longleftarrow \coprod_{\fp\in\mcU}\gam_{\mcV(\fp)}
X\stackrel{\sim}\longrightarrow \coprod_{\fp\in\mcU}\gam_\fp X\,.
\]  
\end{proposition}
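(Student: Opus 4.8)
The plan is to prove the result by first treating the case of a discrete support and checking the two maps are isomorphisms using the support-detection criterion from \cite{\bik:2008a}, namely that an object $Y$ with $\supp_R Y=\emptyset$ is zero, together with the fact that $\gam_\fp$ is exact and commutes with coproducts.

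\textbf{Step 1: The second map.} For each $\fp\in\mcU$ there is a localization triangle
\[
\gam_{\mcV(\fp)\setminus\{\fp\}}\,\gam_{\mcV(\fp)}X \lto \gam_{\mcV(\fp)}X \lto \gam_\fp X\lto\,.
\]
I claim $\gam_{\mcV(\fp)\setminus\{\fp\}}\,\gam_{\mcV(\fp)}X=0$: applying $\gam_\fq$ for any $\fq$ and using Lemma~\ref{lem:gamp} twice, this object has empty support (if $\fq\in\mcV(\fp)\setminus\{\fp\}$ then $\fq\supsetneq\fp$ and $\gam_\fq\gam_{\mcV(\fp)}X=\gam_\fq X$, but $\fq\notin\mcU$ by discreteness of $\mcU$, so $\gam_\fq X=0$; if $\fq\in\{\fp\}$ the outer $\gam$ kills it; otherwise the inner $\gam_{\mcV(\fp)}$ part is already killed). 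Hence $\gam_{\mcV(\fp)}X\xra{\sim}\gam_\fp X$, and taking coproducts over $\mcU$ gives the second isomorphism.

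\textbf{Step 2: The first map.} The morphisms $\gam_{\mcV(\fp)}X\to X$ assemble into $\coprod_{\fp\in\mcU}\gam_{\mcV(\fp)}X\to X$; call its cone $Z$ (or work with the fibre). I compute $\supp_R Z$ by applying $\gam_\fq$ for an arbitrary $\fq\in\Spec R$. Since $\gam_\fq$ is exact and commutes with coproducts, $\gam_\fq\bigl(\coprod_{\fp\in\mcU}\gam_{\mcV(\fp)}X\bigr)\cong\coprod_{\fp\in\mcU}\gam_\fq\gam_{\mcV(\fp)}X$, and by Lemma~\ref{lem:gamp} the $\fp$-th summand is $\gam_\fq X$ when $\fq\in\mcV(\fp)$, i.e. $\fq\supseteq\fp$, and $0$ otherwise. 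By discreteness of $\mcU$, the only $\fp\in\mcU$ with $\fp\subseteq\fq$ is $\fp=\fq$ itself (and only if $\fq\in\mcU$); so the coproduct reduces to the single summand $\gam_\fq X$ (when $\fq\in\mcU$) or is $0$ (when $\fq\notin\mcU$, in which case $\gam_\fq X=0$ anyway). In either case the map $\gam_\fq\bigl(\coprod\gam_{\mcV(\fp)}X\bigr)\to\gam_\fq X$ is an isomorphism — the relevant structure morphism $\gam_\fq\gam_{\mcV(\fq)}X\to\gam_\fq X$ is the one appearing in Step 1, already shown to be an isomorphism. Hence $\gam_\fq Z=0$ for all $\fq$, so $\supp_R Z=\emptyset$ and therefore $Z=0$ by \cite[Theorem 5.2]{\bik:2008a}, giving the first isomorphism.

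\textbf{Main obstacle.} The delicate point is the bookkeeping in Step 1 and Step 2 showing that applying $\gam_\fq$ to the coproduct collapses it to a single summand and that the resulting map is precisely the isomorphism of Step 1 — this requires checking that the structure maps are compatible, i.e. that $\gam_\fq$ applied to the canonical map $\gam_{\mcV(\fp)}X\to X$ agrees with the canonical map $\gam_\fq X\to\gam_\fq X$ up to the identifications of Lemma~\ref{lem:gamp}, which follows from naturality of the localization triangles but should be stated carefully. Naturality of $\gam_\fp$ in $X$ and of the adjunction morphisms handles this; once that is in place, the support computation is routine.
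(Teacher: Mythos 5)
Your argument is correct and follows essentially the same route as the paper: your Step 1 is precisely the content of Lemma~\ref{lem:gamp} applied to the triangle $\gam_{\mcW}(\gam_{\mcV(\fp)}X)\to\gam_{\mcV(\fp)}X\to\gam_{\fp}X\to$ with $\mcW=\mcV(\fp)\setminus\{\fp\}$ (empty support by discreteness), and your cone-support computation in Step 2 is the argument the paper imports from \cite[Theorem~7.1]{\bik:2008a}. One cosmetic slip: the surviving component in Step 2, $\gam_{\fq}\gam_{\mcV(\fq)}X\to\gam_{\fq}X$, is induced by the adjunction $\gam_{\mcV(\fq)}X\to X$ and is not literally the map of Step 1, but it is an isomorphism directly by (the proof of) Lemma~\ref{lem:gamp}, so nothing is lost.
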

\begin{proof}
Arguing as in the proof of \cite[Theorem~7.1]{\bik:2008a} one gets
that the morphisms $\gam_{\mcV(\fp)}X\to X$ induce the isomorphism on
the left, in the statement above.  The isomorphism on the right holds
since for each $\fp\in\mcU$ the morphism $\gam_{\mcV(\fp)}X\to
\gam_\fp X$ is an isomorphism by Lemma~\ref{lem:gamp}.
\end{proof}

\begin{theorem}
\label{thm:findim}
Let $\sfT$ be a compactly generated $R$-linear triangulated category
and $X$ an object of $\sfT$. If $\dim\supp_R X<\infty$, then $X$
is in $\Loc_\sfT(\{\gam_{\fp}X\mid \fp\in\supp_RX\})$.
\end{theorem}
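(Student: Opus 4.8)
The plan is to induct on $d=\dim\supp_R X$, with the base case $d=0$ handled by Proposition~\ref{prop:discrete}: when $\supp_R X$ is discrete, the coproduct $\coprod_{\fp}\gam_{\mcV(\fp)}X\to X$ is an isomorphism and each $\gam_{\mcV(\fp)}X\cong\gam_\fp X$, so $X$ lies in $\Loc_\sfT(\{\gam_\fp X\mid\fp\in\supp_R X\})$. For the inductive step, I would set $\mcV$ to be the set of primes in $\supp_R X$ of positive height in $\supp_R X$ — more precisely, let $\mcV$ be the specialization closure of $\{\fp\in\supp_R X\mid \dim(\supp_R X\cap\mcZ(\fp)^{c})\ge 1\}$, i.e. the non-minimal part of $\supp_R X$; this is a specialization closed subset, and $\supp_R X\setminus\mcV$ consists of primes minimal in $\supp_R X$, hence is discrete. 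Consider the localization triangle $\gam_{\mcV}X\to X\to \bloc_{\mcV}X\to$.

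The strategy is to show that both outer terms lie in $\Loc_\sfT(\{\gam_\fp X\mid\fp\in\supp_R X\})$, so that $X$ does too. First, by Lemma~\ref{lem:gamp} one has $\supp_R(\bloc_{\mcV}X)=\supp_R X\setminus\mcV$, which is discrete, so the base case applies to $\bloc_{\mcV}X$: it lies in $\Loc_\sfT(\{\gam_\fp(\bloc_{\mcV}X)\mid\fp\in\supp_R X\setminus\mcV\})$, and Lemma~\ref{lem:gamp} again identifies $\gam_\fp(\bloc_{\mcV}X)\cong\gam_\fp X$ for $\fp\notin\mcV$. Thus $\bloc_{\mcV}X\in\Loc_\sfT(\{\gam_\fp X\mid\fp\in\supp_R X\})$. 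For $\gam_{\mcV}X$, I would like to apply the induction hypothesis, which requires $\dim\supp_R(\gam_{\mcV}X)<d$; indeed $\supp_R(\gam_{\mcV}X)=\mcV\cap\supp_R X=\mcV$ by Lemma~\ref{lem:gamp}, and by construction $\mcV$ has dimension strictly less than $d$ since we have removed the minimal primes of a space of dimension $d$. So $\gam_{\mcV}X\in\Loc_\sfT(\{\gam_\fp(\gam_{\mcV}X)\mid\fp\in\mcV\})$, and Lemma~\ref{lem:gamp} gives $\gam_\fp(\gam_{\mcV}X)\cong\gam_\fp X$ for $\fp\in\mcV$. Combining, $\gam_{\mcV}X\in\Loc_\sfT(\{\gam_\fp X\mid\fp\in\supp_R X\})$, and the triangle finishes the step.

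The main obstacle I anticipate is the bookkeeping around dimension: one must verify carefully that stripping off the minimal primes of $\supp_R X$ genuinely drops the dimension, i.e. that $\dim\mcV\le d-1$, using that every chain in $\mcV$ can be extended downward by a minimal prime of $\supp_R X$ — this needs that $\supp_R X$ is closed under specialization among the primes relevant here, or rather that $\mcV$ is defined as a genuine specialization closure, and that chains in a noetherian spectrum behave well. A secondary point requiring care is the base case invocation for $\bloc_{\mcV}X$: Proposition~\ref{prop:discrete} is stated for arbitrary objects with discrete support, so it applies directly, but one should confirm $\supp_R(\bloc_{\mcV}X)$ really equals $\supp_R X\setminus\mcV$ rather than merely being contained in it — this is exactly the content of Lemma~\ref{lem:gamp} applied termwise. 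Everything else is a formal consequence of exactness of $\gam_\fp$ and closure of localizing subcategories under the operations in a triangle.
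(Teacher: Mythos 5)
Your argument is correct and is essentially the paper's own proof: induction on $\dim\supp_RX$, base case by Proposition~\ref{prop:discrete}, and the inductive step via the localization triangle for $\mcV=\cl\big(\supp_RX\setminus\min(\supp_RX)\big)$ together with Lemma~\ref{lem:gamp}. One small slip: $\supp_R(\gam_{\mcV}X)=\mcV\cap\supp_RX$ equals $\supp_RX\setminus\min(\supp_RX)$ rather than $\mcV$ itself (the specialization closure can leave $\supp_RX$, so $\dim\mcV$ need not drop), but it is precisely this intersection that has dimension at most $d-1$, so the induction hypothesis applies exactly as you intend.
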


\begin{proof}
Set $\mcU=\supp_RX$ and $\sfS=\Loc_\sfT(\{\gam_{\fp}X\mid
\fp\in\mcU\})$. The proof is an induction on $n=\dim\mcU$.  The case
$n=0$ is covered by Proposition~\ref{prop:discrete}. For $n>0$ set
$\mcU'=\mcU\setminus\min\mcU$, where $\min\mcU$ is the set of minimal
elements with respect to inclusion in $\mcU$, and set $\mcV=\cl\mcU'$.
It follows from Lemma~\ref{lem:gamp} that $\supp_R\gam_\mcV
X=\mcU'$. Since $\dim\mcU'=\dim\mcU - 1$, the induction hypothesis
yields that $\gam_\mcV X$ is in $\sfS$. On the other hand, $\supp_R
L_\mcV X=\min\mcU$ is discrete and therefore $L_\mcV X$ belongs to
$\sfS$ by Proposition~\ref{prop:discrete} and
Lemma~\ref{lem:gamp}. Thus $X$ is in $\sfS$, in view of the
localization triangle~\eqref{eq:locseq}.
\end{proof}

\subsection*{Proof of Theorem~\ref{thm:localglobal}}
It is easy to check that the local-global principle is equivalent to
(1). Also, the implications (1)$\implies$(2) and
(4)$\iff$(5)$\implies$(6) are obvious.

(2)$\implies$(3): Fix $X\in\sfT$ and a specialization closed subset
   $\mcV$ of $\Spec R$.  Then
\[
\gam_\mcV X\in \Loc_{\sfT}(\{\gam_{\fp}\gam_\mcV X\mid \fp\in\Spec
R\})= \Loc_{\sfT}(\{\gam_{\fp} X\mid \fp\in\mcV\})\,
\]
hold, where the last equality follows from Lemma~\ref{lem:gamp}.

(3)$\implies$(1): Since
   $\gam_{\fp}=\gam_{\mcV(\fp)}\bloc_{\mcZ(\fp)}$, it follows from
   condition (3) and Lemma~\ref{lem:lchlocal} that $\gam_{\fp}X$ is in
   $\Loc_{\sfT}(X)$. This implies $\Loc_{\sfT}(X)\supseteq
   \Loc_{\sfT}(\{\gam_{\fp}X\mid\fp\in\Spec R\})$ and the reverse
   inclusion holds by condition (3) for $\mcV=\Spec R$. Thus the
   local-global principle, which is equivalent to condition (1),
   holds.

(3)$\implies$(4): We have $\gam_\fp
X=\gam_{\mcV(\fp)}X_\fp\in\Loc_\sfT(X_\fp)$ for each prime ideal $\fp$
by Proposition~\ref{prop:kosproperties} and hence the hypothesis
implies $X\in \Loc_{\sfT}(\{X_{\fp}\mid \fp\in\Spec R\})$. On the
other hand, $X_\fp\in\Loc_\sfT(X)$ for each prime ideal $\fp$ by
Lemma~\ref{lem:lchlocal}.

(6)$\implies$(2): Fix $X\in\sfT$. For every prime ideal $\fp$, one
   has, for example from Lemma~\ref{lem:gamp}, that $\supp_R X_\fp$ is
   a subset of $\{\fq\in\Spec R\mid \fq\subseteq \fp\}$. In
   particular, it is finite dimensional, since $R$ is noetherian, so
   $X_\fp\in \Loc_{\sfT}(\{\gam_{\fq}X_\fp\mid \fq\in\Spec R\})$
   holds, by Theorem~\ref{thm:findim}. Thus
\begin{align*} 
X\in\Loc_{\sfT}(\{X_{\fp}\mid \fp\in\Spec R\}) &\subseteq
   \Loc_{\sfT}(\{\gam_{\fq}X_\fp\mid \fp,\fq\in\Spec R\})\\ &=
   \Loc_{\sfT}(\{\gam_{\fq}X\mid \fq\in\Spec R\}),
\end{align*}
where the last equality follows from Lemma~\ref{lem:gamp}. \qed

\medskip

The result below is an immediate consequence of Theorems~\ref{thm:findim} and \ref{thm:localglobal}.

\begin{corollary}
\label{cor:findim}
When $\dim\Spec R$ is finite the local-global principle holds for
$\sfT$. \qed
\end{corollary}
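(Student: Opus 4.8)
The plan is to deduce this directly from the two preceding theorems, exactly as the statement advertises; there is no new argument to invent here, only a matter of assembling what is already in place. First I would fix an arbitrary object $X$ in $\sfT$ and note that $\supp_R X$ is a subset of $\Spec R$, so that $\dim\supp_R X \le \dim\Spec R < \infty$. This puts us in a position to apply Theorem~\ref{thm:findim}, which then yields that $X$ lies in $\Loc_\sfT(\{\gam_\fp X\mid \fp\in\supp_R X\})$.

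Next I would reconcile this with condition (2) of Theorem~\ref{thm:localglobal}, which asks that $X\in\Loc_\sfT(\{\gam_\fp X\mid \fp\in\Spec R\})$. The two generating families differ only in the objects $\gam_\fp X$ indexed by primes $\fp\notin\supp_R X$, and by the very definition of $\supp_R X$ these objects are zero; hence they contribute nothing to the localizing subcategory generated, and the two localizing subcategories coincide. Therefore $X\in\Loc_\sfT(\{\gam_\fp X\mid \fp\in\Spec R\})$. Since $X$ was arbitrary, this is precisely condition (2) of Theorem~\ref{thm:localglobal}, and that theorem asserts that (2) is equivalent to the local-global principle; hence the local-global principle holds for $\sfT$.

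As for the main obstacle: there is none of substance at this point. All the work has been front-loaded into Theorem~\ref{thm:findim}, whose proof carries out the genuine induction on Krull dimension, bottoming out in the discrete case treated by Proposition~\ref{prop:discrete}, and into the web of equivalences established in Theorem~\ref{thm:localglobal}. The only thing requiring a moment's care is the harmless change of index set from $\supp_R X$ to $\Spec R$ in the generating family, which is justified simply because the adjoined generators vanish.
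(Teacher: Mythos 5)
Your proof is correct and follows exactly the route the paper intends: the corollary is stated there as an immediate consequence of Theorems~\ref{thm:findim} and \ref{thm:localglobal}, and your assembly of the two (including the observation that $\gam_\fp X=0$ for $\fp\notin\supp_RX$, so the index set may be enlarged to $\Spec R$) is precisely the omitted argument.
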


\subsection*{Classifying localizing subcategories}
Localizing subcategories of $\sfT$ are related to subsets of
$\mcV=\supp_{R}\sfT$ via the following maps
\[
\left\{
\begin{gathered}
\text{Localizing}\\ \text{subcategories of $\sfT$}
\end{gathered}\; \right\} 
\xymatrix@C=3pc {\ar@<1ex>[r]^-{{\sigma}} & \ar@<1ex>[l]^-{{\tau}}}
\left\{
\begin{gathered}
  \text{Families $(\sfS(\fp))_{\fp\in\mcV}$ with $\sfS(\fp)$ a}\\
  \text{localizing subcategory of $\gam_{\fp}\sfT$}
\end{gathered}\;
\right\}
\] 
which are defined by ${\sigma}(\sfS)=(\sfS\cap\gam_{\fp}\sfT)_{\fp\in\mcV}$ and
${\tau}(\sfS(\fp))_{\fp\in\mcV}=\Loc_{\sfT}\big(\sfS(\fp)\mid \fp\in \mcV\big)$.

The next result expresses the local-global principle in terms of these maps.

\begin{proposition}
\label{prop:classification}
The following conditions are equivalent.
\begin{enumerate}[\quad\rm(1)]
\item The local-global principle holds for $\sfT$.
\item The map ${\sigma}$ is bijective, with inverse ${\tau}$.
\item The map ${\sigma}$ is one-to-one.
\end{enumerate}
\end{proposition}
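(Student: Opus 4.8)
The plan is to prove the cycle of implications (1)$\implies$(2)$\implies$(3)$\implies$(1), since (2)$\implies$(3) is immediate from the definition of bijectivity. The heart of the argument is establishing that when the local-global principle holds, the maps $\sigma$ and $\tau$ are mutually inverse, and conversely that injectivity of $\sigma$ already forces the local-global principle.

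For (1)$\implies$(2), I would first check that $\tau\sigma=\id$ and $\sigma\tau=\id$ separately. To see $\tau\sigma(\sfS)=\sfS$ for a localizing subcategory $\sfS$: the inclusion $\tau\sigma(\sfS)=\Loc_\sfT(\sfS\cap\gam_\fp\sfT\mid \fp\in\mcV)\subseteq\sfS$ is clear since each $\sfS\cap\gam_\fp\sfT$ is contained in $\sfS$ and $\sfS$ is localizing. For the reverse inclusion, take $X\in\sfS$; by the local-global principle $X\in\Loc_\sfT(\{\gam_\fp X\mid \fp\in\Spec R\})$, and by Theorem~\ref{thm:localglobal}(1) each $\gam_\fp X$ lies in $\sfS$. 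Since $\gam_\fp X\in\gam_\fp\sfT$, we get $\gam_\fp X\in\sfS\cap\gam_\fp\sfT$, and the primes with $\gam_\fp X\neq 0$ lie in $\mcV=\supp_R\sfT$, so $X\in\tau\sigma(\sfS)$. For $\sigma\tau=\id$: given a family $(\sfS(\fp))_{\fp\in\mcV}$, set $\sfS=\tau((\sfS(\fp)))=\Loc_\sfT(\sfS(\fp)\mid\fp\in\mcV)$; I must show $\sfS\cap\gam_\fq\sfT=\sfS(\fq)$ for each $\fq$. The inclusion $\sfS(\fq)\subseteq\sfS\cap\gam_\fq\sfT$ is obvious. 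For the reverse, the key point is that $\gam_\fq$ applied to objects of $\gam_\fp\sfT$ with $\fp\neq\fq$ vanishes — this follows from Lemma~\ref{lem:gamp} together with the description $\gam_\fp=\gam_{\mcV(\fp)}L_{\mcZ(\fp)}$: if $Z\in\gam_\fp\sfT$ then $\gam_\fq Z=0$ whenever $\fq\neq\fp$ (since $Z$ is supported only at $\fp$). Applying the local-global principle to an object $X\in\sfS\cap\gam_\fq\sfT$, I get $X\in\Loc_\sfT(\gam_\fq X)$, and then I need to argue $\gam_\fq X\in\sfS(\fq)$; this requires knowing that $\gam_\fq$ carries $\sfS$ into $\sfS(\fq)$, which should follow by observing that $\gam_\fq$ preserves localizing subcategories and kills the generators $\sfS(\fp)$ for $\fp\neq\fq$, while on $\sfS(\fq)$ it acts as the identity up to the localizing subcategory generated.

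The implication (3)$\implies$(1) is the one I expect to be the main obstacle. Suppose $\sigma$ is one-to-one; I want to deduce the local-global principle, equivalently Theorem~\ref{thm:localglobal}(2), that $X\in\Loc_\sfT(\{\gam_\fp X\mid \fp\in\Spec R\})$ for every $X$. The natural move is to compare the two localizing subcategories $\sfS_1=\Loc_\sfT(X)$ and $\sfS_2=\Loc_\sfT(\{\gam_\fp X\mid\fp\in\Spec R\})$. We always have $\sfS_2\subseteq\sfS_1$ by Proposition~\ref{prop:kosproperties}(3) (each $\gam_\fp X=\gam_{\mcV(\fp)}X_\fp$ lies in $\Loc_\sfT(X)$, using also that $X_\fp=L_{\mcZ(\fp)}X$ is in $\Loc_\sfT(X)$ — wait, that last point needs care, as $L_{\mcZ(\fp)}X\in\Loc_\sfT(X)$ is not automatic; I would instead invoke that $\gam_\fp X\in\Loc_\sfT(X)$ can be seen via Lemma~\ref{lem:lchlocal} applied after first checking condition (3), or more directly cite that $\gam_\fp X$ is built from Koszul objects). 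Granting $\sfS_2\subseteq\sfS_1$, it suffices to show $\sigma(\sfS_1)=\sigma(\sfS_2)$, i.e. $\sfS_1\cap\gam_\fp\sfT=\sfS_2\cap\gam_\fp\sfT$ for all $\fp$, and then injectivity of $\sigma$ forces $\sfS_1=\sfS_2$, giving $X\in\sfS_2$. Since $\sfS_2\subseteq\sfS_1$ the nontrivial inclusion is $\sfS_1\cap\gam_\fp\sfT\subseteq\sfS_2\cap\gam_\fp\sfT$; the point is that $\gam_\fp X$ generates $\sfS_1\cap\gam_\fp\sfT$ as a localizing subcategory — this is where I would need a lemma, perhaps provable using Proposition~\ref{prop:comp} or the properties of $\gam_\fp$, saying that any object of $\Loc_\sfT(X)$ supported at $\fp$ lies in $\Loc_\sfT(\gam_\fp X)$.

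Rather than fighting this directly, I suspect the cleaner route for (3)$\implies$(1) is: assume $\sigma$ injective and, for a fixed $X$, apply injectivity to the pair $\sfS_1=\Loc_\sfT(X)$ and $\sfS_2=\Loc_\sfT(\{\gam_\fp X\})$ after verifying $\sigma(\sfS_1)=\sigma(\sfS_2)$. The equality $\sigma(\sfS_1)(\fp)=\sfS_1\cap\gam_\fp\sfT$ and a parallel computation for $\sfS_2$ both reduce, by the structure of $\gam_\fp\sfT$, to showing each equals $\Loc_\sfT(\gam_\fp X)\cap\gam_\fp\sfT$; the containment $\sfS_1\cap\gam_\fp\sfT\subseteq\Loc_\sfT(\gam_\fp X)$ is the crux and would be extracted from the fact that for $Y\in\sfS_1\cap\gam_\fp\sfT$ one has $Y\cong\gam_\fp Y\in\gam_\fp(\Loc_\sfT X)$, and $\gam_\fp(\Loc_\sfT X)\subseteq\Loc_\sfT(\gam_\fp X)$ because $\gam_\fp$ is exact, coproduct-preserving, and thus sends $\Loc_\sfT(X)$ into $\Loc_\sfT(\gam_\fp X)$. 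That last structural fact — $\gam_\fp$ maps $\Loc_\sfT(X)$ into $\Loc_\sfT(\gam_\fp X)$ — is the key technical input, and once it is in hand both (1)$\implies$(2) and (3)$\implies$(1) fall out; proving it is essentially the observation that $\gam_\fp$ is a composite of coproduct-preserving exact functors, so $\{Y\mid \gam_\fp Y\in\Loc_\sfT(\gam_\fp X)\}$ is a localizing subcategory containing $X$.
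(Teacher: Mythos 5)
Your (1)$\implies$(2) is essentially the paper's argument, though you complicate the verification of $\sigma\tau=\id$ by routing through the local-global principle where it is not needed: once $Z\in\gam_\fq\sfT$, you already have $Z\cong\gam_\fq Z$ because $\gam_\fq$ is idempotent, so the observation that $\gam_\fq$ is exact, coproduct-preserving, and kills the generators $\sfS(\fp)$ with $\fp\ne\fq$ gives $\gam_\fq\tau((\sfS(\fp)))\subseteq\sfS(\fq)$ unconditionally. This matters: the paper establishes $\sigma\tau=\id$ \emph{without any hypothesis}, and this is precisely what powers its short proof of (3)$\implies$(1).

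Your (3)$\implies$(1), however, has a genuine gap, and you flag it yourself but do not close it. Your plan is to show $\sigma(\sfS_1)=\sigma(\sfS_2)$ with $\sfS_1=\Loc_\sfT(X)$ and $\sfS_2=\Loc_\sfT(\{\gam_\fp X\})$, claiming both have $\fp$-component $\Loc_\sfT(\gam_\fp X)\cap\gam_\fp\sfT$. The inclusion $\sfS_1\cap\gam_\fp\sfT\subseteq\Loc_\sfT(\gam_\fp X)$ indeed follows from $Y\cong\gam_\fp Y$ and $\gam_\fp\Loc_\sfT(X)\subseteq\Loc_\sfT(\gam_\fp X)$. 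But the reverse inclusion $\Loc_\sfT(\gam_\fp X)\cap\gam_\fp\sfT\subseteq\sfS_1\cap\gam_\fp\sfT$ is equivalent to $\gam_\fp X\in\Loc_\sfT(X)$, and this is \emph{not} automatic: $\gam_\fp=\gam_{\mcV(\fp)}L_{\mcZ(\fp)}$ involves localization at the specialization closed set $\mcZ(\fp)$, which is not of the form $\mcV(\fa)$, so Proposition~\ref{prop:kosproperties}(3) does not apply, and Lemma~\ref{lem:lchlocal} only yields it under the hypothesis of Theorem~\ref{thm:localglobal}(3)---exactly what you are trying to prove, so invoking it would be circular. (Your alternate suggestion ``built from Koszul objects'' also fails for the same reason: $X(\fp)=(\kos X\fp)_\fp$ again involves $L_{\mcZ(\fp)}$.)

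The paper sidesteps this entirely. Having proved $\sigma\tau=\id$ unconditionally, injectivity of $\sigma$ \emph{forces} $\tau\sigma=\id$; applying this to $\Loc_\sfT(X)$ gives
\[
\Loc_\sfT(X)=\Loc_\sfT\big(\{\Loc_\sfT(X)\cap\gam_\fp\sfT\mid\fp\in\Spec R\}\big)\subseteq\Loc_\sfT\big(\{\gam_\fp X\mid\fp\}\big),
\]
and only the \emph{one} inclusion $\Loc_\sfT(X)\cap\gam_\fp\sfT\subseteq\Loc_\sfT(\gam_\fp X)$ is used. That yields $X\in\Loc_\sfT(\{\gam_\fp X\mid\fp\})$, which is condition~(2) of Theorem~\ref{thm:localglobal} and hence the full local-global principle; the reverse inclusion $\gam_\fp X\in\Loc_\sfT(X)$ then drops out \emph{a posteriori}, not as an input. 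To repair your argument, replace the attempt to show $\sigma(\sfS_1)=\sigma(\sfS_2)$ by the formal deduction $\sigma\tau=\id$ and $\sigma$ injective $\implies\tau\sigma=\id$, and apply $\tau\sigma$ to $\Loc_\sfT(X)$ as above.
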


\begin{proof}
We repeatedly use the fact that $\gam_{\fp}$ is an exact functor preserving coproducts. For each localizing subcategory $\sfS$ of $\sfT$ and each $\fp$ in $\Spec R$ there is an inclusion
\begin{equation}
\label{eq:local}
\sfS \cap\gam_{\fp}\sfT \subseteq \gam_{\fp}\sfS\,.
\end{equation}

We claim that ${\sigma}{\tau}$ is the identity, that is to say, that for any family $(\sfS(\fp))_{\fp\in\mcV}$ of localizing subcategories with $S(\fp)\subseteq \gam_{\fp}\sfT$ the localizing subcategory generated by all the $S(\fp)$, call it $\sfS$, satisfies
\[
\sfS \cap \gam_{\fp}\sfT=\sfS(\fp)\,,\quad\text{for each  $\fp\in\mcV$.}
\]
To see this, note that $\gam_{\fp}\sfS = \sfS(\fp)$ holds, since $\gam_{\fp}\gam_{\fq}=0$ when $\fp\ne\fq$. Hence \eqref{eq:local} yields an inclusion $\sfS \cap \gam_{\fp}\sfT \subseteq \sfS(\fp)$. The reverse inclusion is obvious.

(1)$\implies$(2): It suffices to show that $\tau\sigma$ equals the identity, since $\sigma\tau=\id$ holds. Fix a localizing subcategory $\sfS$ of $\sfT$. It is clear that $\tau\sigma(\sfS)\subseteq \sfS$. As to the reverse inclusion: Fixing $X$ in $\sfS$, it follows from Theorem~\ref{thm:localglobal}(1) that $\gam_{\fp}X$ is in $\sfS\cap \gam_{\fp}\sfT$ and hence in $\tau\sigma(\sfS)$, for each $\fp\in\Spec R$. Thus, 
$X$ is in $\tau\sigma(\sfS)$, again by Theorem~\ref{thm:localglobal}(1).

(2)$\implies$(3): Clear.

(3)$\implies$(1): Since $\sigma\tau=\id$ and $\sigma$ is one-to-one, one gets $\tau\sigma=\id$.  For each object $X$ in $\sfT$ there is thus an equality:
\begin{align*}
\Loc_{\sfT}(X) &= \Loc_{\sfT}(\{\Loc_{\sfT}(X)\cap\gam_{\fp}\sfT\mid\fp\in\Spec R\})\\
&\subseteq \Loc_{\sfT}(\{\gam_{\fp}X \mid\fp\in\Spec R\})
\end{align*}
The inclusion follows from \eqref{eq:local}. Now apply Theorem~\ref{thm:localglobal}.
\end{proof}

The local-global principle focuses attention on the subcategory
$\gam_{\fp}\sfT$. Next we describe some of its properties, even though
these are not needed in the sequel.

\subsection*{Local structure}
Let $\fp$ be a point in $\Spec R$. In analogy with the case of $R$-modules,
we say that an object $X$ in $\sfT$ is \emph{$\fp$-local} if 
\[
\supp_R X\subseteq\{\fq\in\Spec R\mid\fq\subseteq\fp\}
\]
and that $X$ is \emph{$\fp$-torsion} if 
\[
\supp_R X\subseteq\{\fq\in\Spec R\mid\fq\supseteq\fp\}.
\]
The objects of $\gam_{\fp}\sfT$ are precisely those that are both
$\fp$-local and $\fp$-torsion; see \cite[Corollary 5.9]{\bik:2008a}
for alternative descriptions. Set
\[
X(\fp) = (\kos X\fp)_{\fp}\,.
\]
The subcategory $\Loc_{\sfT}(X(\fp))$ is independent of the choice of a generating set for the ideal $\fp$ used to construct $\kos X\fp$; this follows from the result below.

\begin{lemma}
\label{lem:residues}
The following statements hold for each $X\in\sfT$ and $\fp\in\Spec R$.
\begin{enumerate}[\quad\rm(1)]
\item $X(\fp)$ is $\fp$-local and $\fp$-torsion.
\item $\Loc_{\sfT}(X(\fp))=\Loc_{\sfT}(\gam_{\fp}X)$.
\item $\Hom_{\sfT}(W,X(\fp))=0$ for any object $W$ that is $\fq$-local and
$\fq$-torsion with $\fq\ne\fp$.
\end{enumerate}
\end{lemma}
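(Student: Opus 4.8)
The three statements concern the object $X(\fp)=(\kos X\fp)_\fp$, and the natural plan is to deduce each from results already established for Koszul objects and for the functors $\gam_{\mcV}$, $L_{\mcV}$, combined with the basic facts about $\fp$-local and $\fp$-torsion objects recalled just before the lemma. For part (1): by Lemma~\ref{lem:kos-props}, $\supp_R(\kos X\fp)=\mcV(\fp)\cap\supp_R X\subseteq\mcV(\fp)$, so $\kos X\fp$ is $\fp$-torsion; and localization at $\fp$ only shrinks support to primes contained in $\fp$ (this is the characterization of $\fp$-local modules applied via $\Hom^*_\sfT(C,-)$ for compact $C$, or more directly the support computation for $X_\fp=L_{\mcZ(\fp)}X$ using Lemma~\ref{lem:gamp}), so $X(\fp)$ is $\fp$-local. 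Since $\fp$-torsion is preserved under $L_{\mcZ(\fp)}$ as well — $\mcZ(\fp)$ and $\mcV(\fp)$ are disjoint except at $\fp$ — the object $X(\fp)$ is both $\fp$-local and $\fp$-torsion, i.e.\ it lies in $\gam_\fp\sfT$ by \cite[Corollary 5.9]{\bik:2008a}.

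For part (2): I would show $\Loc_\sfT(X(\fp))=\Loc_\sfT(\gam_\fp X)$ by proving each object lies in the localizing subcategory generated by the other. First, $X(\fp)\in\gam_\fp\sfT=\Im\gam_\fp$ by part (1), and since $\gam_\fp=\gam_{\mcV(\fp)}L_{\mcZ(\fp)}$, applying these functors to the defining triangle of $\kos X\fp$ together with Proposition~\ref{prop:kosproperties}(2) (which gives $\Loc_\sfT(\kos X\fp)=\Loc_\sfT(\gam_{\mcV(\fp)}X)$) and the fact that $\gam_{\fp}$ commutes with the relevant constructions should place $X(\fp)$ in $\Loc_\sfT(\gam_\fp X)$. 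Conversely, to see $\gam_\fp X\in\Loc_\sfT(X(\fp))$, note that $\gam_\fp X=\gam_{\mcV(\fp)}(X_\fp)$ and $X_\fp=L_{\mcZ(\fp)}X$; by Proposition~\ref{prop:kosproperties}(1) applied to the ideal $\fp$ and the object $X_\fp$, one has $\gam_{\mcV(\fp)}(X_\fp)\in\Loc_\sfT(\kos{(X_\fp)}{\fp})$, and $\kos{(X_\fp)}{\fp}\cong(\kos X\fp)_\fp=X(\fp)$ since forming Koszul objects commutes with the exact coproduct-preserving functor $L_{\mcZ(\fp)}$ (up to the non-uniqueness already built into the notation). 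So $\gam_\fp X\in\Loc_\sfT(X(\fp))$.

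For part (3): given $W$ that is $\fq$-local and $\fq$-torsion with $\fq\neq\fp$, I want $\Hom_\sfT(W,X(\fp))=0$. The cleanest route is an orthogonality argument: $X(\fp)$ is $\fp$-local, so by Lemma~\ref{lem:invertible} (applied with $\mcV=\mcZ(\fp)$, using $X(\fp)\cong L_{\mcZ(\fp)}(\kos X\fp)$) every $r\in R$ with $\mcV(r)\subseteq\mcZ(\fp)$, equivalently $r\notin\fp$, acts invertibly on $\Hom^*_\sfT(W,X(\fp))$. On the other hand $X(\fp)$ is $\fp$-torsion: since $\kos X\fp\in\sfT_{\mcV(\fp)}$ and this is preserved by $L_{\mcZ(\fp)}$, we have $X(\fp)\in\sfT_{\mcV(\fp)}$, so $\Hom^*_\sfT(W,X(\fp))$ is a $\fp$-torsion $R$-module — every element is killed by a power of $\fp$. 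Since $\fq\neq\fp$, pick $r\in\fp\setminus\fq$; it acts invertibly (from $\fp$-locality) but also locally nilpotently (from $\fp$-torsion) on $\Hom^*_\sfT(W,X(\fp))$ — wait, that conclusion needs $r\notin\fp$ for invertibility, so instead use that $\fq$-torsion of $W$ forces, via Proposition~\ref{prop:cohlocal} and Lemma~\ref{lem:invertible} applied to $W$, each $s\in\fq$ with $s\notin\fp$ to act both locally nilpotently (since $W$ is $\fq$-torsion) and invertibly (since $X(\fp)$ is $\fp$-local, so $s\notin\fp$ acts invertibly on the Hom module). Such an $s$ exists because $\fq\not\subseteq\fp$ would give it directly, while if $\fq\subsetneq\fp$ one instead uses that $X(\fp)$ being $\fp$-torsion makes some $r\in\fp$ act locally nilpotently while $W$ being $\fq$-local with $r\notin\fq$ makes $r$ invertible on $\Hom^*_\sfT(W,-)$. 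Either way, an element acting both invertibly and locally nilpotently on a graded module forces that module to vanish.

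**Main obstacle.** The technical heart is part (3): correctly splitting into the cases $\fq\not\subseteq\fp$ and $\fq\subsetneq\fp$ and, in each, producing a homogeneous element that simultaneously acts invertibly (from one of the local/torsion conditions) and locally nilpotently (from the other) on the graded $R$-module $\Hom^*_\sfT(W,X(\fp))$; one must invoke Lemma~\ref{lem:invertible} and the $\fp$-torsion characterization carefully, keeping track of which object contributes which property. Parts (1) and (2) are essentially bookkeeping with Lemma~\ref{lem:kos-props}, Lemma~\ref{lem:gamp}, and Proposition~\ref{prop:kosproperties}.
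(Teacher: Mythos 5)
Parts (1) and (2) are correct. For (1) you work with supports (Lemma~\ref{lem:kos-props} plus Lemma~\ref{lem:gamp}), whereas the paper notes that $(\kos X\fp)_{\fp}$ can be realized as $\kos{X_\fp}{\fp}$, so it lies in $\Thick_\sfT(X_\fp)$ and is $\fp$-torsion by \cite[Lemma~5.11]{\bik:2008a}; both routes are fine. For (2) your two inclusions are exactly the paper's one-line argument: apply the exact, coproduct-preserving functor $L_{\mcZ(\fp)}$ to the equality $\Loc_\sfT(\kos X\fp)=\Loc_\sfT(\gam_{\mcV(\fp)}X)$ from Proposition~\ref{prop:kosproperties}(2), using that $L_{\mcZ(\fp)}$ commutes with $\gam_{\mcV(\fp)}$ and with the Koszul construction.

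Part (3) has a genuine gap in the case $\fq\not\subseteq\fp$. There you pick $s\in\fq\setminus\fp$ and assert that $s$ acts locally nilpotently on $\Hom^{*}_{\sfT}(W,X(\fp))$ ``since $W$ is $\fq$-torsion''. But torsionness of the \emph{source} object does not pass to $\Hom$-modules: the Koszul lemma \cite[Lemma~5.11]{\bik:2008a} you are implicitly relying on gives torsionness when a Koszul object sits in the appropriate variable, not for an arbitrary $\fq$-torsion object $W$ (which is only a coproduct-infested colimit of such Koszul objects, and torsion modules are not closed under the resulting products). Concretely, in $\sfD(\bbZ)$ the object $\bbZ/p^{\infty}$ is $(p)$-torsion, yet $\Ext^{1}_{\bbZ}(\bbZ/p^{\infty},\bbZ)$ is the ring of $p$-adic integers, on which $p$ acts injectively; so the general principle you invoke is false. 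To close the case you need the colocalization adjunction, as the paper does: $W$ being $\fq$-torsion means $W\in\sfT_{\mcV(\fq)}$, whence $\Hom_{\sfT}(W,X(\fp))\cong\Hom_{\sfT}(W,\gam_{\mcV(\fq)}X(\fp))$, and $\gam_{\mcV(\fq)}X(\fp)=0$ because $\supp_{R}X(\fp)\subseteq\{\fp\}$ by part (1) while $\fp\notin\mcV(\fq)$. (Alternatively, first reduce to $W=\kos C\fq$ with $C$ compact, using Proposition~\ref{prop:comp} and the fact that $\{W'\mid\Hom^{*}_{\sfT}(W',X(\fp))=0\}$ is localizing; for such $W$ your invertible-plus-locally-nilpotent argument does apply.) Your other case, $\fq\subsetneq\fp$, is correct and agrees with the paper: the module $\Hom^{*}_{\sfT}(W,X(\fp))$ is $\fq$-local because $W$ is, and $\fp$-torsion because $X(\fp)$ is a localized Koszul object on $\fp$, hence zero.
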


\begin{proof}
The argument is based on the fact that the localization functor that
takes an object $X$ to $X_{\fp}$ is exact and preserves coproducts.

(1) Exactness of localization implies $(\kos X\fp)_{\fp}$ can be
realized as $\kos {X_\fp}\fp$.  Hence $X(\fp)$ belongs to
$\Thick_\sfT(X_\fp)$, so that it is $\fp$-local; it is $\fp$-torsion
by \cite[Lemma~5.11]{\bik:2008a}.

(2) Applying the localization functor to the equality
$\Loc_{\sfT}(\kos X{\fp})= \Loc_{\sfT}(\gam_{\mcV({\fp})}X)$ in
Proposition~\ref{prop:kosproperties} yields (2).

(3) If $\fq\not\subseteq\fp$ holds, then $\gam_{\mcV(\fq)}(X(\fp))=0$
and hence the desired claim follows from the adjunction isomorphism
$\Hom_{\sfT}(W,X(\fp)) \cong \Hom_{\sfT}(W,\gam_{\mcV(\fq)}X(\fp))$.
If $\fq\subseteq\fp$, then the $R$-module $\Hom_{\sfT}^{*}(W,X(\fp))$
is $\fq$-local, by Proposition~\ref{prop:cohlocal}, and $\fp$-torsion,
by \cite[Lemma 5.11]{\bik:2008a}, and hence zero since  $\fq\ne\fp$.
\end{proof}

\begin{proposition}
\label{prop:catgammap}
For each $\fp$ in $\Spec R$ and each compact object $C$ in $\sfT$, the
object $C(\fp)$ is compact in $\gam_{\fp}\sfT$, and both $\{C(\fp)
\mid C\in \sfT^{\sfc}\}$ and $\{\gam_{\fp}C \mid C\in \sfT^{\sfc}\}$
generate the triangulated category $\gam_{\fp}\sfT$. Furthermore, the
$R$-linear structure on $\sfT$ induces a natural structure of an
$R_{\fp}$-linear triangulated category on $\gam_{\fp}\sfT$.
\end{proposition}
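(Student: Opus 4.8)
The plan is to establish the four assertions in order, leveraging the descriptions of $\gam_{\fp}\sfT$ and $C(\fp)$ already in hand. First I would show that $C(\fp)$ is compact in $\gam_{\fp}\sfT$. By Lemma~\ref{lem:residues}(1), $C(\fp)$ lies in $\gam_{\fp}\sfT$. Given a coproduct $\coprod_i X_i$ of objects in $\gam_{\fp}\sfT$, one must check $\Hom_{\sfT}(C(\fp),\coprod_i X_i)$ computes as the coproduct of $\Hom_{\sfT}(C(\fp),X_i)$. Since $C$ is compact, the functor $\Hom^{*}_{\sfT}(C,-)$ commutes with coproducts, and for $X$ in $\gam_{\fp}\sfT$ the $R$-module $\Hom^{*}_{\sfT}(C,X)$ is $\fp$-local and $\fp$-torsion. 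Using the exact triangle $\kos C\fp\to C\to \cdots$ defining the Koszul object, together with Proposition~\ref{prop:cohlocal} relating $\Hom^{*}_{\sfT}(C,Y_{\fp})$ to $\Hom^{*}_{\sfT}(C,Y)_{\fp}$ for compact $C$, one identifies $\Hom^{*}_{\sfT}(C(\fp),X)$, for $X\in\gam_{\fp}\sfT$, with a module built functorially and in a colimit-preserving way out of $\Hom^{*}_{\sfT}(C,X)$; concretely, since $X$ is already $\fp$-local one has $\Hom^{*}_{\sfT}(C(\fp),X)\cong \Hom^{*}_{\sfT}(\kos C\fp, X)$, which is a finite iterated cone of copies of $\Hom^{*}_{\sfT}(C,X)$ and hence commutes with coproducts in $X$. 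This gives compactness of $C(\fp)$ in $\gam_{\fp}\sfT$.

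Next I would prove the generation statements. That $\{\gam_{\fp}C\mid C\in\sfT^{\sfc}\}$ generates $\gam_{\fp}\sfT$: if $Y$ in $\gam_{\fp}\sfT$ satisfies $\Hom^{*}_{\sfT}(\gam_{\fp}C,Y)=0$ for all compact $C$, then since $\gam_{\fp}=\gam_{\mcV(\fp)}\bloc_{\mcZ(\fp)}$ and $Y\cong\gam_{\fp}Y$ is itself $\fp$-local, adjunction-type manipulations with the localization triangles of Section~\ref{sec:Local cohomology and support} reduce this to $\Hom^{*}_{\sfT}(C,Y)=0$ for all compact $C$, forcing $Y=0$ because $\sfT$ is compactly generated. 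A cleaner route: $\gam_{\fp}$ applied to a set of compact generators of $\sfT$ gives a generating set of $\gam_{\fp}\sfT$ by a standard localization argument, since $\gam_{\fp}$ commutes with coproducts and its essential image is a localizing subcategory. Then by Lemma~\ref{lem:residues}(2), $\Loc_{\sfT}(C(\fp))=\Loc_{\sfT}(\gam_{\fp}C)$ for each compact $C$, so $\{C(\fp)\mid C\in\sfT^{\sfc}\}$ generates the same localizing subcategory, namely $\gam_{\fp}\sfT$.

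For the last assertion, the $R$-action $R\to Z^{*}(\sfT)$ restricts to an action on the localizing subcategory $\gam_{\fp}\sfT$; the point is that for $X,Y\in\gam_{\fp}\sfT$ the graded module $\Hom^{*}_{\sfT}(X,Y)$ is $\fp$-local (by Proposition~\ref{prop:cohlocal}, using that $X$, hence $X_\fp\simeq X$, has $\fp$-local Hom into anything), so every element of $R\setminus\fp$ acts invertibly on all Hom-groups in $\gam_{\fp}\sfT$. A morphism of graded rings $R\to Z^{*}(\gam_{\fp}\sfT)$ inverting $R\setminus\fp$ factors uniquely through the homogeneous localization $R_{\fp}$, giving $\gam_{\fp}\sfT$ the structure of an $R_{\fp}$-linear triangulated category, naturally in the evident sense.

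I expect the main obstacle to be the compactness argument: one must be careful that $\Hom^{*}_{\sfT}(C(\fp),-)$, restricted to $\gam_{\fp}\sfT$, genuinely commutes with coproducts, and the cleanest way is to observe that on $\gam_{\fp}\sfT$ — where all objects are already $\fp$-local — the localization $(-)_{\fp}$ is the identity, so $\Hom^{*}_{\sfT}(C(\fp),X)=\Hom^{*}_{\sfT}((\kos C\fp)_{\fp},X)\cong\Hom^{*}_{\sfT}(\kos C\fp,X)_{\fp}\cong\Hom^{*}_{\sfT}(\kos C\fp,X)$ by Proposition~\ref{prop:cohlocal} and $\fp$-locality of the target, and $\kos C\fp\in\Thick_{\sfT}(C)$ is compact in $\sfT$. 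Everything else is formal localization bookkeeping.
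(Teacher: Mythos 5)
Your proposal is correct and follows essentially the same route as the paper: the key isomorphism $\Hom^{*}_{\sfT}(C(\fp),X)\cong\Hom^{*}_{\sfT}(\kos C\fp,X)$ for $\fp$-local $X$ (best justified directly by the localization adjunction, since $X$ is $L_{\mcZ(\fp)}$-local, rather than by Proposition~\ref{prop:cohlocal}, which localizes the second argument), compactness of $\kos C\fp\in\Thick_{\sfT}(C)$, and $\fp$-locality of Hom-modules via Proposition~\ref{prop:cohlocal} for the $R_{\fp}$-structure. The only cosmetic difference is that you obtain generation by $\{C(\fp)\}$ from generation by $\{\gam_{\fp}C\}$ via Lemma~\ref{lem:residues}(2), whereas the paper gets it directly from Proposition~\ref{prop:comp}; both are fine.
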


\begin{proof}
Recall that $\gam_{\fp}T$ is a localizing subcategory of $\sfT$, so the coproduct in it is the same as the one in $\sfT$. Each object $X$ in $\gam_{\fp}\sfT$ is $\fp$-local, so there is an isomorphism
\[
\Hom_{\sfT}(C(\fp),X)\cong \Hom_{\sfT}(\kos C{\fp},X)\,.
\]
When $C$ is compact in $\sfT$, so is $\kos C{\fp}$. Thus the isomorphism above implies that $C(\fp)$ is compact in $\gam_{\fp}\sfT$. Furthermore, the collection of objects $\kos C{\fp}$ with $C$ compact in $\sfT$ generates $\sfT_{\mcV(\fp)}$ by Proposition~\ref{prop:comp}, and hence the $C(\fp)$ generate $\gam_{\fp}\sfT$.

The class of compact objects $C$ generates $\sfT$ hence the objects $\gam_\fp C$ generate $\gam_\fp\sfT$.

Proposition~\ref{prop:cohlocal} implies that for each pair of objects $X,Y$ in $\gam_\fp\sfT$ the $R$-module $\Hom^*_\sfT(X,Y)$ is $\fp$-local, so that they admit a natural $R_{\fp}$-module structure. This translates to an action of $R_{\fp}$ on $\gam_{\fp}\sfT$.
\end{proof}

\section{Stratification}
\label{sec:Stratification}

In this section we introduce a notion of stratification for
triangulated categories with ring actions. It is based on the concept
of a minimal subcategory introduced by Hovey, Palmieri, and
Strickland~\cite[\S6]{Hovey/Palmieri/Strickland:1997a}.

As before $\sfT$ is a compactly generated $R$-linear triangulated
category.

\subsection*{Minimal subcategories}
A localizing subcategory of $\sfT$ is said to be \emph{minimal} if it
is nonzero and has no proper nonzero localizing subcategories.

\begin{lemma}
\label{lem:loc-minimal}
A nonzero localizing subcategory $\sfS$ of $\sfT$ is minimal if and only if for all nonzero objects $X,Y$ in $\sfS$ one has $\Hom^*_{\sfT}(X,Y)\ne 0$.
\end{lemma}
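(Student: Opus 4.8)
The plan is to prove both implications. For the forward direction, suppose $\sfS$ is minimal and fix nonzero objects $X,Y$ in $\sfS$. First I would consider $\Loc_{\sfT}(Y)$, the localizing subcategory of $\sfT$ generated by $Y$; since it is nonzero and contained in $\sfS$, minimality of $\sfS$ forces $\Loc_{\sfT}(Y)=\sfS$. Hence $X\in\Loc_{\sfT}(Y)$. Now consider the full subcategory $\sfS'$ of all objects $W$ in $\sfT$ with $\Hom^{*}_{\sfT}(W,Y)=0$; this is easily seen to be a localizing subcategory (it is triangulated since $\Hom^{*}_{\sfT}(-,Y)$ is a cohomological functor, and closed under coproducts since $\Hom^{*}_{\sfT}(\coprod W_i,Y)\cong\prod\Hom^{*}_{\sfT}(W_i,Y)$). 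Since $X\in\Loc_{\sfT}(Y)$ and $Y\notin\sfS'$ (because $\End^{*}_{\sfT}(Y)\ne 0$ as $\id_Y\ne 0$), we cannot have $Y\in\sfS'$; more to the point, I want $X\notin\sfS'$. The cleanest route: the subcategory $\{W\in\sfT\mid \Hom^{*}_{\sfT}(W',W)=0 \text{ for all } W'\}$ — actually the right object to use is the subcategory of $Y'\in\sfT$ with $\Hom^{*}_{\sfT}(X,Y')=0$? No — I need $X$ on the left. Let me instead argue: the objects $Y'$ such that $\Hom^*_\sfT(X', Y')=0$ for every $X'\in\Loc_\sfT(Y)$...

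Cleaner approach for the forward direction: let $\sfR=\{W\in\sfT\mid\Hom^{*}_{\sfT}(W,Y)=0\}$. As noted this is localizing. If $X\in\sfR$, then since $\Loc_\sfT(Y)$ is the smallest localizing subcategory containing $Y$, and... hmm, that still doesn't immediately give $Y\in\sfR$. So instead: consider $\sfR\cap\sfS$. Since $\sfR$ is localizing and $\sfR\cap\sfS$ is a localizing subcategory of $\sfS$, minimality gives $\sfR\cap\sfS=0$ or $\sfR\cap\sfS=\sfS$. If $Y\in\sfR$ then $0=\Hom^{*}_{\sfT}(Y,Y)\ni\id_Y$, contradiction, so $Y\notin\sfR$, hence $\sfR\cap\sfS\ne\sfS$, hence $\sfR\cap\sfS=0$, hence $X\notin\sfR$, i.e. $\Hom^{*}_{\sfT}(X,Y)\ne 0$. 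That works.

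For the converse, suppose $\sfS$ is nonzero and $\Hom^{*}_{\sfT}(X,Y)\ne 0$ for all nonzero $X,Y\in\sfS$. Let $\sfS'\subseteq\sfS$ be a nonzero localizing subcategory; I must show $\sfS'=\sfS$. Pick a nonzero object $Y\in\sfS'$. For any object $X\in\sfS$, I want $X\in\sfS'$. Apply Lemma~\ref{lem:loc-set} to the localizing subcategory $\sfS'$ (note $\sfS'=\Loc_\sfT(\sfC)$ where $\sfC$ is the set of objects of $\sfS'$ — it is generated by a set since the class of objects can be taken as a set up to isomorphism, or one restricts to a small skeleton): there is a localization functor $L\col\sfT\to\sfT$ with $\Ker L=\sfS'$, and the associated localization triangle $\gam X\to X\to LX\to$ with $\gam X\in\Im\gam=\Ker L=\sfS'$ and $LX\in\Im L$, where $\Ker\gam=\Im L$. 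Now $\gam X\in\sfS'\subseteq\sfS$ and $X\in\sfS$, so $LX\in\sfS$ (it is the cone of a map between objects of the triangulated subcategory $\sfS$). Also $\Hom^{*}_{\sfT}(Y, LX)=0$: indeed $Y\in\sfS'=\Ker L=\Im\gam$, and $\Hom_\sfT(\Im\gam,\Im L)=0$ is a standard property of such localization pairs (morphisms from $\Ker L$ to objects in the image of $L$ vanish — this follows from $\Hom_\sfT(\gam X, LY')=0$, which holds since applying $\Hom_\sfT(-,LY')$ to $\gam Y'\to Y'\to LY'$ and using that $\eta Y'\col Y'\to LY'$ becomes... actually simplest: $\Hom_\sfT(Z,LY')\cong\Hom_\sfT(LZ,LY')$ for $Z$ arbitrary via the localization, and $LZ=0$ when $Z\in\Ker L$). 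Since $Y\ne 0$ and the hypothesis says any two nonzero objects of $\sfS$ have nonzero Hom, $LX$ cannot be a nonzero object of $\sfS$; but $LX\in\sfS$, so $LX=0$, i.e. $X\in\Ker L=\sfS'$. Thus $\sfS\subseteq\sfS'$, proving minimality.

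The main obstacle is getting the bookkeeping right around the localization functor in the converse: one needs to know that $\sfS'$ is generated by a set to invoke Lemma~\ref{lem:loc-set}, and one needs the vanishing $\Hom_{\sfT}(\Ker L,\Im L)=0$, which is standard for localization functors but should be cited or proved in one line. Neither is deep, but these are the points where the argument could go wrong if stated carelessly. I would handle the set-generation point by noting that a localizing subcategory, regarded on its own, has a small skeleton and is generated by it; alternatively this is exactly the setup already used in the proof of Lemma~\ref{lem:loc-set}.
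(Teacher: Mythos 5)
Your overall strategy coincides with the paper's: the forward direction via the left-orthogonal subcategory $\{W\mid\Hom^{*}_{\sfT}(W,Y)=0\}$ being localizing, and the converse via the localization triangle associated to the sub-subcategory $\sfS'$. The forward direction as you state it is correct (the paper phrases it via $\Loc_{\sfT}(X)=\sfS$ rather than via the intersection $\sfR\cap\sfS$, but these are the same argument).

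There is, however, a genuine gap in your converse, precisely at the point you flagged: to invoke Lemma~\ref{lem:loc-set} you need $\sfS'$ to be generated by a \emph{set} of objects, and your proposed justification --- that a localizing subcategory ``has a small skeleton and is generated by it'' --- is false. A nonzero localizing subcategory of $\sfT$ is closed under arbitrary coproducts and therefore has a proper class of isomorphism classes of objects (already $\sfD(\bbZ)$ has no small skeleton); and whether an arbitrary localizing subcategory of a compactly generated triangulated category is generated by a set is in general not known. The paper avoids this entirely by the following reduction, which you should adopt: a nonzero localizing $\sfS'\subsetneq\sfS$ contains a nonzero object $X$, and $\Loc_{\sfT}(X)$ is again a nonzero localizing subcategory contained in $\sfS'$, hence proper in $\sfS$; so one may assume $\sfS'=\Loc_{\sfT}(X)$, which \emph{is} generated by a set. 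With that replacement the rest of your argument (the triangle $\gam W\to W\to LW$, membership of $LW$ in $\sfS$, the vanishing $\Hom^{*}_{\sfT}(\Ker L,\Im L)=0$, and the conclusion $LW=0$) is correct and is exactly the paper's proof, run directly rather than contrapositively.
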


\begin{proof}
When $\sfS$ is minimal and $X$ a nonzero object in it
$\Loc_{\sfT}(X)=\sfS$, by minimality, so if $\Hom_{\sfT}^{*}(X,Y)=0$
for some $Y$ in $\sfS$, then $\Hom_{\sfT}^{*}(Y,Y)=0$, that is to say,
$Y=0$.

Suppose $\sfS$ contains a nonzero proper localizing subcategory
$\sfS'$; we may assume $\sfS'=\Loc_{\sfT}(X)$ for some nonzero object
$X$. For each object $W$ in $\sfT$ there is then an exact triangle
$W'\xra{\theta} W\xra{\eta}W''\to$ with $W'\in\sfS'$,
$\Hom_\sfT^*(X,W'')=0$, and $\theta$ invertible if and only if and $W$
is in $\sfS'$; see Lemma~\ref{lem:loc-set}. It remains to pick an
object $W$ in $\sfS\setminus\sfS'$, set $Y=W''$, and note that $Y$ is
in $\sfS$ and nonzero.
\end{proof}

\subsection*{Stratification}
We say that $\sfT$ is \emph{stratified by $R$} if the following
conditions hold:
\begin{enumerate}[{\quad\rm(S1)}]
\item The local-global principle, discussed in Section~\ref{sec:A
local-global principle}, holds for $\sfT$.
\item For each $\fp\in \Spec R$ the localizing subcategory
$\gam_{\fp}\sfT$ is either zero or minimal.
\end{enumerate}
The crucial condition here is (S2); for example, (S1) holds when the
dimension of $\Spec R$ is finite, by Corollary~\ref{cor:findim}. Since
the objects in $\gam_\fp\sfT$ are precisely the $\fp$-local and
$\fp$-torsion ones in $\sfT$, condition (S2) is that each nonzero
$\fp$-local $\fp$-torsion object builds every other such object.

Given a localizing subcategory $\sfS$ of $\sfT$ and a subset $\mcU$ of
$\Spec R$ set
\[
\supp_R\sfS = \bigcup_{X\in\sfS}\supp_{R}X \quad\text{and}\quad
\supp_R^{-1}\mcU = \{X\in\sfT\mid \supp_RX\subseteq\mcU\}.
\]
Observe that $\supp_R$ and $\supp_R^{-1}$ both preserve inclusions.

\begin{theorem}
\label{thm:classifying}
Let $\sfT$ be a compactly generated $R$-linear triangulated
category. If $\sfT$ is stratified by $R$, then there are inclusion
preserving inverse bijections:
\[
\left\{
\begin{gathered}
\text{Localizing}\\ \text{subcategories of $\sfT$}
\end{gathered}\;
\right\} \xymatrix@C=3pc{ \ar@<1ex>[r]^-{\supp_R} &
  \ar@<1ex>[l]^-{\supp_R^{-1}}} \Big\{
\begin{gathered}
  \text{Subsets of $\supp_{R} \sfT$}
\end{gathered}\;
\Big\}
\]
Conversely, if the map $\supp_R$ is injective, then $\sfT$ must be
stratified by $R$.
\end{theorem}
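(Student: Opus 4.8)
The plan is to show, assuming stratification, that the two composites $\supp_R^{-1}\comp\supp_R$ and $\supp_R\comp\supp_R^{-1}$ are the identity, and then to extract conditions (S1) and (S2) from injectivity of $\supp_R$ alone. Throughout I would lean on a handful of elementary bookkeeping facts about support, all immediate from the functors $\gam_{\fp}$ being exact and coproduct-preserving: $\supp_R$ carries coproducts to unions of supports and cones into unions of supports (and is shift-invariant), so each $\supp_R^{-1}\mcU$ is genuinely a localizing subcategory of $\sfT$; if $\gam_{\fp}X=0$ then $\gam_{\fp}Y=0$ for every $Y\in\Loc_{\sfT}(X)$, whence $\supp_R\Loc_{\sfT}(X)=\supp_RX$; a nonzero object of $\gam_{\fp}\sfT$, being $\fp$-local and $\fp$-torsion and nonzero, has support exactly $\{\fp\}$ (support detects vanishing, cf.\ \cite[Theorem~5.2]{\bik:2008a}); consequently $\supp_R(\gam_{\fp}\sfT)$ is $\{\fp\}$ when $\gam_{\fp}\sfT\ne0$ and is empty otherwise, and $\supp_R\sfS\subseteq\supp_R\sfT$ for every localizing $\sfS$, so $\supp_R$ does land in subsets of $\supp_R\sfT$.

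For $\supp_R\comp\supp_R^{-1}=\id$: given $\mcU\subseteq\supp_R\sfT$, put $\sfS=\supp_R^{-1}\mcU$; then $\supp_R\sfS\subseteq\mcU$ by definition, and for $\fp\in\mcU$ the nonzero category $\gam_{\fp}\sfT$ contains a nonzero $W$ with $\supp_RW=\{\fp\}\subseteq\mcU$, so $W\in\sfS$ and $\fp\in\supp_R\sfS$. For $\supp_R^{-1}\comp\supp_R=\id$: fix a localizing $\sfS$; the inclusion $\sfS\subseteq\supp_R^{-1}(\supp_R\sfS)$ is trivial, so take $X$ with $\supp_RX\subseteq\supp_R\sfS$. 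By (S1) in the form of Theorem~\ref{thm:localglobal}(1) it suffices to show $\gam_{\fp}X\in\sfS$ for each $\fp$, which is clear unless $\fp\in\supp_RX\subseteq\supp_R\sfS$; in that case choose $Y\in\sfS$ with $\fp\in\supp_RY$, so $\gam_{\fp}Y$ is a nonzero object of $\gam_{\fp}\sfT$ and, again by Theorem~\ref{thm:localglobal}(1), lies in $\sfS$. Now (S2) says $\gam_{\fp}\sfT$ is minimal, hence equals $\Loc_{\sfT}(\gam_{\fp}Y)\subseteq\sfS$, so $\gam_{\fp}X\in\gam_{\fp}\sfT\subseteq\sfS$, as needed. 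Both maps visibly preserve inclusions, which completes the first statement.

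For the converse, assume $\supp_R$ is injective. To obtain (S1), fix $X$ and compare $\sfS=\Loc_{\sfT}(X)$ with $\sfS'=\Loc_{\sfT}(\{\gam_{\fp}X\mid\fp\in\Spec R\})$: the bookkeeping facts give $\supp_R\sfS=\supp_RX=\bigcup_{\fp}\supp_R(\gam_{\fp}X)=\supp_R\sfS'$, so injectivity yields $\sfS=\sfS'$, which is exactly the local-global principle. To obtain (S2), suppose $\gam_{\fp}\sfT\ne0$ and let $\sfS$ be a nonzero localizing subcategory of $\gam_{\fp}\sfT$; since coproducts in $\gam_{\fp}\sfT$ agree with those in $\sfT$, it is also localizing in $\sfT$, and $\emptyset\ne\supp_R\sfS\subseteq\supp_R(\gam_{\fp}\sfT)=\{\fp\}$ (nonempty because $\sfS\ne0$), whence $\supp_R\sfS=\{\fp\}=\supp_R(\gam_{\fp}\sfT)$ and injectivity forces $\sfS=\gam_{\fp}\sfT$. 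Thus $\gam_{\fp}\sfT$ is minimal and $\sfT$ is stratified by $R$.

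I do not anticipate a real obstacle: the substantive work has already been absorbed into Theorem~\ref{thm:localglobal} and the definition of minimality, leaving only a careful matching of the two composites together with routine support bookkeeping. The subtlest point is the converse's verification of (S1) — one must notice that injectivity of $\supp_R$ is precisely the leverage needed to identify $\Loc_{\sfT}(X)$ with $\Loc_{\sfT}(\{\gam_{\fp}X\mid\fp\in\Spec R\})$, since these two localizing subcategories manifestly have the same support.
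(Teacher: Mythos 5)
Your proposal is correct and follows essentially the same route as the paper: the forward direction reduces to the local-global principle (Theorem~\ref{thm:localglobal}(1)) plus minimality of each $\gam_{\fp}\sfT$, and the converse extracts (S1) and (S2) separately from injectivity of $\supp_R$. The only difference is presentational: the paper packages this as the factorization $\supp_R=\sigma'\sigma$ and cites Proposition~\ref{prop:classification}, whereas you inline that content by verifying the two composites directly, which is equally valid.
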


\begin{proof}
For each $\fp\in\Spec R$ the subcategory $\Ker\gam_{\fp}$ is
localizing. This implies that for any subset $\mcU$ of $\Spec R$ the
subcategory $\supp_R^{-1}\mcU$ is localizing, for
\[
\supp_R^{-1}\mcU=\bigcap_{\fp\not\in\mcU}\Ker\gam_\fp\,.
\]
Moreover, it is clear that $\supp_R(\supp_R^{-1}\mcU)=\mcU$ for each subset $\mcU$ of $\supp_R\sfT$, and that $ \sfS\subseteq \supp_R^{-1}(\supp_R\sfS)$ holds for any localizing subcategory $\sfS$. The moot point is whether $\sfS$ contains $\supp_R^{-1}(\supp_R\sfS)$; equivalently, whether $\supp_R$ is one-to-one.

The map $\supp_{R}$ factors as $\sigma'\sigma$ with $\sigma$ as in Proposition~\ref{prop:classification} and $\sigma'$ the map
\[
\left\{
\begin{gathered}
  \text{Families $(\sfS(\fp))_{\fp\in\supp_{R}\sfT}$ with
  $\sfS(\fp)$}\\ \text{a localizing subcategory of $\gam_{\fp}\sfT$}
\end{gathered}\;
\right\} \lto \Big\{
\begin{gathered}
  \text{Subsets of $\supp_{R} \sfT$}
\end{gathered}\;
\Big\}
\]
where $\sigma'(\sfS(\fp))=\{\fp\in\Spec R\mid
S(\fp)\ne\{0\}\}$. Evidently $\sigma'$ is one-to-one if and only if it
is bijective, if and only if the minimality condition (S2) holds. The
map $\sigma$ is also one-to-one if and only if it is bijective;
moreover this holds precisely when the local-global principle holds for
$\sfT$, by Proposition~\ref{prop:classification}. The desired result
follows.
\end{proof}

\begin{corollary}
If $R$ stratifies $\sfT$ and $\sfG$ is a set of generators for $\sfT$,
then each localizing subcategory $\sfS$ of $\sfT$ is generated by the
set $\sfS\cap\{\gam_{\fp}X\mid X\in \sfG,\, \fp\in\Spec R\}$.  In
particular, there exists a localization functor $L\col\sfT\to\sfT$
such that $\sfS=\Ker L$.
\end{corollary}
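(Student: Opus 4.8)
\emph{Proof sketch.} The plan is to establish the sharper claim that $\sfS$ equals $\sfS':=\Loc_{\sfT}\big(\sfS\cap\{\gam_{\fp}X\mid X\in\sfG,\ \fp\in\Spec R\}\big)$. The last sentence of the corollary then follows immediately from Lemma~\ref{lem:loc-set}, once one observes that the indexing collection is a genuine set: it is the intersection of $\sfS$ with the image of $\sfG\times\Spec R$ under the map $(X,\fp)\mapsto\gam_{\fp}X$.

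The inclusion $\sfS'\subseteq\sfS$ is clear, since $\sfS$ is localizing and contains the generating set. For the reverse inclusion I would fix $Y\in\sfS$ and use the local-global principle in the form of Theorem~\ref{thm:localglobal}(1), which reduces the task to showing $\gam_{\fp}Y\in\sfS'$ for every $\fp\in\Spec R$; this is clear when $\gam_{\fp}Y=0$, so assume $\gam_{\fp}Y\neq0$. Applying Theorem~\ref{thm:localglobal}(1) to the localizing subcategory $\Loc_{\sfT}(Y)$ shows $\gam_{\fp}Y\in\Loc_{\sfT}(Y)\subseteq\sfS$. Next, since $\gam_{\fp}$ is exact and preserves coproducts and $\sfG$ generates $\sfT$, the full subcategory $\{W\in\sfT\mid\gam_{\fp}W\in\Loc_{\sfT}(\{\gam_{\fp}X\mid X\in\sfG\})\}$ is localizing and contains $\sfG$, hence equals $\sfT$; in particular $\gam_{\fp}\sfT=\Loc_{\sfT}(\{\gam_{\fp}X\mid X\in\sfG\})$, and as $\gam_{\fp}Y$ witnesses that this category is nonzero, some $\gam_{\fp}X_{0}$ with $X_{0}\in\sfG$ is nonzero. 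By condition (S2) the category $\gam_{\fp}\sfT$ is minimal, so $\Loc_{\sfT}(\gam_{\fp}Y)=\gam_{\fp}\sfT=\Loc_{\sfT}(\gam_{\fp}X_{0})$. The first equality gives $\gam_{\fp}X_{0}\in\gam_{\fp}\sfT=\Loc_{\sfT}(\gam_{\fp}Y)\subseteq\sfS$, so $\gam_{\fp}X_{0}$ belongs to $\sfS\cap\{\gam_{\fp}X\mid X\in\sfG,\ \fp\in\Spec R\}$ and hence to $\sfS'$; the second equality then gives $\gam_{\fp}Y\in\Loc_{\sfT}(\gam_{\fp}X_{0})\subseteq\sfS'$. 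Having checked $\gam_{\fp}Y\in\sfS'$ for all $\fp$, Theorem~\ref{thm:localglobal}(1) yields $Y\in\sfS'$, so $\sfS\subseteq\sfS'$ and equality holds.

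I do not anticipate a genuine obstacle: the argument simply marries the local-global principle (S1) with the minimality clause (S2), the latter used exactly as in Lemma~\ref{lem:loc-minimal}, namely that any nonzero object of a minimal localizing subcategory generates it. The two points that deserve a word of care are the set-theoretic remark in the first paragraph and the observation that $\{\gam_{\fp}X\mid X\in\sfG\}$ generates $\gam_{\fp}\sfT$ for an arbitrary generating set $\sfG$, since $\sfG$ need not consist of compact objects and so Proposition~\ref{prop:catgammap} cannot be quoted directly.
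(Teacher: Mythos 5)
Your argument is correct, and it takes a mildly but genuinely different route from the paper's. The paper disposes of the first assertion in one line by invoking the classification theorem (Theorem~\ref{thm:classifying}): one checks that $\sfS$ and the localizing subcategory generated by $\sfS\cap\{\gam_{\fp}X\mid X\in\sfG,\ \fp\in\Spec R\}$ have the same support, and then injectivity of $\supp_{R}$ forces them to coincide. You bypass the support classification entirely and re-derive the statement from the two stratification axioms: Theorem~\ref{thm:localglobal}(1) reduces the problem to one prime at a time, and (S2), used exactly as in Lemma~\ref{lem:loc-minimal}, transfers generation between $\gam_{\fp}Y$ and a nonzero $\gam_{\fp}X_{0}$ with $X_{0}\in\sfG$. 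Since Theorem~\ref{thm:classifying} is itself proved from precisely these two ingredients, the mathematical content is the same, but your version is more self-contained and makes explicit a point that the paper's ``immediate'' glosses over, namely that $\{\gam_{\fp}X\mid X\in\sfG\}$ generates $\gam_{\fp}\sfT$ for an arbitrary (not necessarily compact) generating set $\sfG$; your argument for this, via the localizing subcategory $\{W\mid\gam_{\fp}W\in\Loc_{\sfT}(\{\gam_{\fp}X\mid X\in\sfG\})\}$, is exactly right, and a version of it is also needed to see that the two supports agree in the paper's proof. The paper's route is shorter once the classification theorem is in hand; yours trades brevity for transparency. Both conclude identically via Lemma~\ref{lem:loc-set}.
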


\begin{proof}
The first assertion is an immediate consequence of
Theorem~\ref{thm:classifying}, since $\sfS$ and the localizing
subcategory generated by the given set of objects have the same
support.  Given this, the second one follows from
Lemma~\ref{lem:loc-set}.
\end{proof}

Other consequences of stratification are given in
Sections~\ref{sec:Orthogonality} and \ref{sec:Classifying thick
subcategories}. Now we provide examples of triangulated categories
that are stratified; see also Example~\ref{eg:kg}.

\begin{example}
\label{ex:derivedcategory}
Let $A$ be a commutative noetherian ring and $\sfD(A)$ the derived
category of the category of $A$-modules. The category $\sfD(A)$ is
compactly generated, $A$-linear, and triangulated. This example is
discussed in \cite[\S8]{\bik:2008a}, where it is proved that the
notion of support introduced in \cite{\bik:2008a} coincides with the
usual one, due to Foxby and Neeman; see \cite[Theorem
9.1]{\bik:2008a}. In view of Theorem~\ref{thm:classifying}, one can
reformulate \cite[Theorem 2.8]{Neeman:1992a} as: The $A$-linear
triangulated category $\sfD(A)$ is stratified by $A$.  This example
will be subsumed in Theorem~\ref{thm:cdga}.
\end{example}

\begin{example}
Let $k$ be a field and $\Lambda$ an exterior algebra over $k$ in finitely many indeterminates of negative odd degree; the grading is upper. We view $\Lambda$ as a dg algebra, with differential zero. In \cite[\S6]{\bik:2008b} we introduced the homotopy category of graded-injective dg $\Lambda$-modules and proved that it is stratified by a natural action of its cohomology algebra, $\Ext^{*}_{\Lambda}(k,k)$.
\end{example}

The next example shows that there are triangulated categories which cannot be stratified by any ring action.

\begin{example}
\label{ex:quiver}
Let $k$ be a field and $Q$ a quiver of Dynkin type; see, for example, \cite[Chapter 4]{Benson:1991a}. The path algebra $kQ$ is a finite dimensional hereditary algebra of finite representation type. It is easily checked that the graded center of the derived category $\sfD(kQ)$ is isomorphic to $k$. In fact, each object in $\sfD(kQ)$ is a direct sum of indecomposable objects, and $\End^*_{\sfD(kQ)}(X)\cong k$ for each indecomposable object $X$.  The localizing subcategories of $\sfD(kQ)$ are parameterized by the noncrossing partitions associated to $Q$; this can be deduced from work of Ingalls and Thomas \cite{Ingalls/Thomas:2009}. Thus the triangulated category $\sfD(kQ)$ is stratified by some ring acting on it if and only if the quiver consists of one vertex and has no arrows.
\end{example}

\section{Orthogonality}
\label{sec:Orthogonality}
Let $X$ and $Y$ be objects in $\sfT$. The discussion below is motivated by the question: when is $\Hom^{*}_{\sfT}(X,Y)=0$? The orthogonality property \cite[Corollary 5.8]{\bik:2008a} says that if $\cl(\supp_{R}X)$ and $\supp_{R}Y$ are disjoint, then one has the vanishing. What we seek are converses to this statement, ideally in terms of the supports of $X$ and $Y$. Lemma~\ref{lem:loc-minimal} suggests that one can expect satisfactory answers only when $\sfT$ is stratified. In this section we establish some results addressing this question and give examples which indicate that these may be the best possible.

For any graded $R$-module $M$ set $\Supp_R M=\{\fp\in\Spec R\mid M_\fp\ne 0\}$. This subset is sometimes referred to as the `big support' of $M$ to distinguish it from its `homological' support, $\supp_{R}M$. Analogously, for any object $X$ in $\sfT$, we set
\[
\Supp_R X=\bigcup_{C\in\sfT^c}\Supp_R\Hom^*_\sfT(C,X)\,.
\]
It follows from \cite[Theorem 5.15(1) and Lemma~2.2(1)]{\bik:2008a}
that there is an equality:
\[
\Supp_{R}X=\cl(\supp_R X)\,.
\]
We use this equality without further comment.

\begin{theorem}
\label{thm:intersection}
Let $\sfT$ be a compactly generated $R$-linear triangulated
category. If $R$ stratifies $\sfT$, then for each compact object $C$
and each object $Y$, there is an equality
\[
\Supp_{R} \Hom^{*}_{\sfT}(C,Y) = \Supp_{R}C \cap \Supp_{R}Y\,.
\]
\end{theorem}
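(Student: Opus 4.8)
The containment $\Supp_R \Hom^*_\sfT(C,Y) \subseteq \Supp_R C \cap \Supp_R Y$ is the formal direction and needs no stratification: for a prime $\fp$, Proposition~\ref{prop:cohlocal} identifies $\Hom^*_\sfT(C,Y)_\fp$ with $\Hom^*_\sfT(C,Y_\fp)$, and $Y_\fp = L_{\mcZ(\fp)}Y$, so if $\fp \notin \Supp_R Y$ then $Y_\fp$ vanishes (since $\supp_R Y_\fp \subseteq \{\fq \subseteq \fp\} \cap \supp_R Y$, and $\fp \notin \cl(\supp_R Y)$ forces this set to be empty, whence $Y_\fp = 0$ by the faithfulness of support). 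If instead $\fp \notin \Supp_R C = \cl(\supp_R C)$, then $\fp \notin \supp_R C$, and one can argue that $\Hom^*_\sfT(C,Y)_\fp$ still vanishes — this is where a little care is needed, and I would deduce it from the fact that $\gam_\fp C = 0$ together with the local-global decomposition of $C$, or more directly by noting $\supp_R C \cap \{\fq \subseteq \fp\} = \emptyset$ since $\fp$ is not a specialization of any point of $\supp_R C$. So the first half of the plan is these two routine localization arguments.

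\textbf{The reverse containment} is the substantive one and is where stratification enters. Fix $\fp \in \Supp_R C \cap \Supp_R Y$; I must produce a compact $C'$ with $\Hom^*_\sfT(C',Y)_\fp \ne 0$. By Proposition~\ref{prop:cohlocal} it is equivalent to show $\Hom^*_\sfT(C',Y_\fp) \ne 0$ for some compact $C'$, i.e.\ that $Y_\fp \ne 0$ and moreover is "seen" by a compact object — but since compact objects generate $\sfT$, $Y_\fp \ne 0$ already gives some compact $C'$ with $\Hom^*_\sfT(C',Y_\fp)\ne 0$, hence $\fp \in \Supp_R Y_\fp \subseteq \Supp_R Y$; that only recovers membership in $\Supp_R Y$. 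To get the intersection I instead work with $\gam_\fp$. The point is: $\fp \in \Supp_R C$ means $\fp \in \cl(\supp_R C)$, so there is $\fq \subseteq \fp$ with $\gam_\fq C \ne 0$; similarly $\fp \in \cl(\supp_R Y)$ gives $\fq' \subseteq \fp$ with $\gam_{\fq'}Y \ne 0$. The plan is to reduce to showing that a single prime in $\supp_R C \cap \supp_R Y$ contributes to $\supp_R \Hom^*_\sfT(C,Y)$, then take specialization closures. Concretely: first prove the "small support" statement $\supp_R \Hom^*_\sfT(C,Y) = \supp_R C \cap \supp_R Y$ when $\fp$ is replaced by a minimal such prime — no, better: prove directly that if $\fq \in \supp_R C \cap \supp_R Y$ then $\fq \in \supp_R \Hom^*_\sfT(C,Y)$, and separately handle $\Supp$ vs $\supp$ via $\cl$.

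\textbf{Key step via minimality.} Here is the crux. Suppose $\fq \in \supp_R C \cap \supp_R Y$, so $\gam_\fq C \ne 0 \ne \gam_\fq Y$. Both lie in $\gam_\fq \sfT$, which by (S2) is minimal, so by Lemma~\ref{lem:loc-minimal} we get $\Hom^*_\sfT(\gam_\fq C, \gam_\fq Y) \ne 0$. Now I relate this to $\Hom^*_\sfT(C,Y)$: using $\gam_\fq C = \gam_{\mcV(\fq)}(C_\fq)$, adjunction $\Hom^*_\sfT(\gam_\fq C,\gam_\fq Y) \cong \Hom^*_\sfT(C_\fq, \gam_\fq Y)$ (since $\gam_\fq Y \in \sfT_{\mcV(\fq)}$, the unit of the $\gam_{\mcV(\fq)}$-colocalization is invertible on maps into it), then $\cong \Hom^*_\sfT(C, \gam_\fq Y)$ — wait, $C_\fq$ vs $C$: since $\gam_\fq Y$ is $\fq$-local, $\Hom^*_\sfT(C_\fq, \gam_\fq Y) \cong \Hom^*_\sfT(C, \gam_\fq Y)$ by the localization adjunction (the map $C \to C_\fq$ becomes an iso after $\Hom(-,Z)$ for $Z$ $\fq$-local, as in the proof of Proposition~\ref{prop:cohlocal}). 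So $\Hom^*_\sfT(C,\gam_\fq Y) \ne 0$. Since $C$ is compact, Proposition~\ref{prop:cohlocal} and Lemma~\ref{lem:gamp}-type reasoning identify this (up to localization/torsion) with a subquotient of $\Hom^*_\sfT(C,Y)$ localized and torsioned at $\fq$; the Koszul exact sequence in Lemma~\ref{lem:kos-props} then forces $\Hom^*_\sfT(C,Y)_\fq \ne 0$, i.e.\ $\fq \in \Supp_R \Hom^*_\sfT(C,Y)$. Finally, taking $\cl$ of both sides of $\supp_R \Hom^*_\sfT(C,Y) \supseteq \supp_R C \cap \supp_R Y$ and using $\cl(\mcU_1) \cap \cl(\mcU_2) = \cl(\cl\mcU_1 \cap \cl\mcU_2)$ together with the general fact $\Supp_R M = \cl(\supp_R M)$ should upgrade this to the $\Supp_R$ equality. \textbf{The main obstacle} I anticipate is the bookkeeping in this last paragraph: carefully passing between $\gam_\fq C$ and $C$ through the chain of adjunctions, and then from the nonvanishing of $\Hom^*_\sfT(C,\gam_\fq Y)$ back to nonvanishing of a localization of $\Hom^*_\sfT(C,Y)$ itself — this requires invoking compactness of $C$ at the right moment (so that $\gam_\fq$ commutes past $\Hom^*_\sfT(C,-)$ appropriately) and the torsion/local analysis of Lemma~\ref{lem:kos-props}. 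The minimality input itself is short; it is the translation between the three flavors of support ($\gam_\fq$-support, homological support $\supp_R$, big support $\Supp_R$) that demands attention.
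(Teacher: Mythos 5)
Your argument is, at bottom, the paper's proof run in the opposite logical direction: the paper fixes $\fp$ with $\Hom^{*}_{\sfT}(C,Y)_{\fp}=0$ and pushes \emph{vanishing} forward through the chain $\Hom^{*}(C,Y)_{\fq}\cong\Hom^{*}(C,Y_{\fq})\Rightarrow\Hom^{*}(C,\gam_{\fq}Y)\Rightarrow\Hom^{*}(C_{\fq},\gam_{\fq}Y)\Rightarrow\Hom^{*}(\gam_{\fq}C,\gam_{\fq}Y)$ and then uses minimality (Lemma~\ref{lem:loc-minimal}) to kill one of $\gam_{\fq}C$, $\gam_{\fq}Y$; you start from minimality and push \emph{non}-vanishing backwards through the same chain, using the same three ingredients (compactness of $C$ via Proposition~\ref{prop:cohlocal}, $\gam_{\fq}Y\in\Loc_{\sfT}(Y_{\fq})$ and $\gam_{\fq}C\in\Loc_{\sfT}(C_{\fq})$ via Proposition~\ref{prop:kosproperties}, and $\fq$-locality of $\gam_{\fq}Y$). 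Two corrections. First, the step $\Hom^{*}(\gam_{\fq}C,\gam_{\fq}Y)\ne 0\Rightarrow\Hom^{*}(C_{\fq},\gam_{\fq}Y)\ne 0$ is not an ``adjunction isomorphism'': the map $\Hom^{*}(C_{\fq},Z)\to\Hom^{*}(\gam_{\mcV(\fq)}C_{\fq},Z)$ for $Z\in\sfT_{\mcV(\fq)}$ need not be invertible, since morphisms from $L_{\mcV}$-local objects into $\mcV$-torsion objects do not vanish in general (compare Example~\ref{ex:orthogonality}). What holds, and is all you need, is the one-way implication coming from $\gam_{\fq}C\in\Loc_{\sfT}(C_{\fq})$ together with the fact that $\{W\mid\Hom^{*}_{\sfT}(W,Z)=0\}$ is a localizing subcategory; the invocation of Lemma~\ref{lem:kos-props} at the end of your chain is likewise unnecessary once the four one-way implications are lined up. Second, your closing move does not yield the stated equality: you prove $\supp_{R}C\cap\supp_{R}Y\subseteq\Supp_{R}\Hom^{*}_{\sfT}(C,Y)$, and taking specialization closures gives only $\cl(\supp_{R}C\cap\supp_{R}Y)$ on the left, which is in general strictly smaller than $\cl(\supp_{R}C)\cap\cl(\supp_{R}Y)=\Supp_{R}C\cap\Supp_{R}Y$. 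The identity you quote, $\cl\mcU_{1}\cap\cl\mcU_{2}=\cl(\cl\mcU_{1}\cap\cl\mcU_{2})$, is true but is not the one you need, namely $\cl\mcU_{1}\cap\cl\mcU_{2}=\cl(\mcU_{1}\cap\mcU_{2})$, and that already fails for $\mcU_{1}=\{(x)\}$, $\mcU_{2}=\{(y)\}$ in $\Spec k[x,y]$. Be aware, though, that the paper's own proof of the reverse inclusion reduces to exactly the same pointwise claim --- for every $\fq\subseteq\fp$, one of $\gam_{\fq}C$, $\gam_{\fq}Y$ is zero --- so your argument establishes precisely what the paper's argument establishes; in the noetherian setting of Corollary~\ref{cor:intersection}, where $\Supp$ and $\supp$ agree for the modules and compact objects involved, the distinction disappears.
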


The proof requires only stratification condition (S2), never (S1).

\begin{proof}
Fix a prime ideal $\fp\in\Spec R$.  Suppose
$\Hom^{*}_{\sfT}(C,Y)_{\fp}\ne 0$; by definition, one then has
$\fp\in\Supp_R Y$. Moreover $\End^{*}_{\sfT}(C)_{\fp}\ne 0$ since the
$R$-action on $\Hom^{*}_{\sfT}(C,Y)_{\fp}$ factors through it, hence
$\fp$ is also in $\Supp_{R}C$. Thus there is an inclusion
\[
\Supp_{R} \Hom^{*}_{\sfT}(C,Y) \subseteq \Supp_{R} C \cap
\Supp_{R}Y\,.
\]

Now suppose $\Hom^{*}_{\sfT}(C,Y)_{\fp}=0$. One has to verify that
that for any prime ideal $\fq\subseteq\fp$ either $\gam_{\fq}C=0$ or
$\gam_{\fq}Y=0$. By \cite[Theorem~4.7]{\bik:2008a}, see also
Proposition~\ref{prop:cohlocal}, since $C$ is compact the adjunction
morphism $Y\to Y_{\fq}$ induces an isomorphism
\[
0=\Hom^{*}_{\sfT}(C,Y)_{\fq} \cong \Hom^{*}_{\sfT}(C,Y_{\fq})\,.
\]
As $\gam_{\mcV(\fq)}Y$ is in $\Loc_{\sfT}(Y)$, by
Proposition~\ref{prop:kosproperties}, one obtains that $\gam_{\fq}Y$
is in $\Loc_{\sfT}(Y_{\fq})$, hence the calculation above yields
$\Hom^{*}_{\sfT}(C,\gam_{\fq}Y)=0$. As $\gam_{\fq}Y$ is $\fq$-local
the adjunction morphism $C\to C_{\fq}$ induces the isomorphism below
\[
\Hom^{*}_{\sfT}(C_{\fq},\gam_{\fq}Y)\cong
\Hom^{*}_{\sfT}(C,\gam_{\fq}Y)=0\,.
\]
Using now the fact that $\gam_{\fq}C$ is in $\Loc_\sfT(C_{\fq})$ one
gets $\Hom_\sfT^{*}(\gam_{\fq}C,\gam_{\fq}Y)=0$. Our hypothesis was
that $R$ stratifies $\sfT$. Thus one of $\gam_{\fq}C$ or $\gam_{\fq}Y$
is zero.
\end{proof}

The example below shows that the conclusion of the preceding theorem
need not hold when $C$ is not compact.  See also
Example~\ref{ex:orthogonality}

\begin{example}
\label{ex:intersection}
Let $A$ be a commutative noetherian ring with Krull dimension at least
one and $\fm$ a maximal ideal of $A$ that is not also a minimal
prime. For example, take $A=\bbZ$ and $\fm=(p)$, where $p$ is a prime
number.

Let $\sfT$ be the derived category of $A$-modules, viewed as an
$A$-linear category; see Example~\ref{ex:derivedcategory}. Let $E$ be
the injective hull of $A/\fm$. The $A$-module $\Hom^{*}_{\sfT}(E,E)$
is then the $\fm$-adic completion of $A$, so it follows that
\[
\Supp_{A}\Hom^{*}_\sfT(E,E) = \{\fp\subseteq \fm\mid \fp\in \Spec
R\}\supsetneq \{\fm\} = \Supp_{A}E\,.
\]
Observe that $\supp_{A}\Hom^{*}_\sfT(E,E)=
\Supp_{A}\Hom^{*}_\sfT(E,E)$ and $\supp_{A}E=\Supp_{A}E$.
\end{example}

One drawback of Theorem~\ref{thm:intersection} is that it involves the
big support $\Supp_{R}$, while one is mainly interested in
$\supp_{R}$. Next we identify a rather natural condition on $\sfT$
under which one can obtain results in the desired form.

\subsection*{Noetherian categories}
We call a compactly generated $R$-linear triangulated category
\emph{noetherian} if for any compact object $C$ in $\sfT$ the
$R$-module $\End^{*}_{\sfT}(C)$ is finitely generated. This is
equivalent to the condition that for all compact objects $C,D$ the
$R$-module $\Hom_{\sfT}^{*}(C,D)$ is finitely generated: consider
$\End^{*}_{\sfT}(C\oplus D)$. If $C$ generates $\sfT$, then $\sfT$ is
noetherian if and only if the $R$-module $\End^{*}_{\sfT}(C)$ is
noetherian.

As a consequence of Theorem~\ref{thm:intersection} one gets:
 
\begin{corollary}
\label{cor:intersection}
If $\sfT$ is noetherian and stratified by $R$, then for each pair of
compact objects $C,D$ in $\sfT$ there is an equality
\[
\supp_{R}\Hom^{*}_{\sfT}(C,D) = \supp_{R}C\cap \supp_{R}D\,.
\]
When in addition $R^{i}=0$ holds for $i<0$, one has
$\Hom^{n}_{\sfT}(C,D)=0$ for $n\gg 0$ if and only if
$\Hom^{n}_{\sfT}(D,C)=0$ for $n\gg 0$.
\end{corollary}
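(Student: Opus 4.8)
The plan is to deduce this corollary from Theorem~\ref{thm:intersection} by converting the statement about big supports into one about homological supports, using the noetherian hypothesis, and then reading off the finiteness consequence.

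First I would recall that for a finitely generated graded $R$-module $M$ the two notions of support agree, i.e. $\Supp_R M = \supp_R M$; this is standard commutative algebra (for $M$ finitely generated, $M_\fp = 0$ iff $\fp \notin \mcV(\ann_R M)$, and the same Zariski-closed set describes $\supp_R M$). Since $\sfT$ is noetherian and $C,D$ are compact, the $R$-module $\Hom^*_\sfT(C,D)$ is finitely generated, so $\Supp_R\Hom^*_\sfT(C,D) = \supp_R\Hom^*_\sfT(C,D)$. Likewise, for a compact object $C$, the equality $\Supp_R C = \cl(\supp_R C)$ recorded in the excerpt combines with the fact that $\End^*_\sfT(C)$ is finitely generated — so $\supp_R C = \Supp_R \End^*_\sfT(C)$ is already specialization closed — to give $\Supp_R C = \supp_R C$, and similarly for $D$. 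Plugging all three equalities into Theorem~\ref{thm:intersection} (applied with $Y = D$, which is compact hence in particular an object of $\sfT$) yields $\supp_R\Hom^*_\sfT(C,D) = \supp_R C \cap \supp_R D$.

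For the second assertion, assume $R^i = 0$ for $i < 0$, so that $R$ is non-negatively graded and noetherian, hence $R^0$ is noetherian and each $R^i$ is a finitely generated $R^0$-module. The point is that a finitely generated graded $R$-module $M$ satisfies $M^n = 0$ for $n \gg 0$ if and only if every prime in $\supp_R M$ contains the irrelevant ideal $R^{\ges 1} = \bigoplus_{i\ges 1}R^i$; equivalently, $\supp_R M \subseteq \mcV(R^{\ges 1})$. Indeed if $\supp_R M \subseteq \mcV(R^{\ges 1})$ then $\mcV(\ann_R M) \subseteq \mcV(R^{\ges 1})$, so $R^{\ges 1} \subseteq \sqrt{\ann_R M}$, and since $M$ is finitely generated some power of $R^{\ges 1}$ annihilates $M$, forcing $M$ to live in bounded degrees; the converse is immediate. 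Applying this to $M = \Hom^*_\sfT(C,D)$ and to $M = \Hom^*_\sfT(D,C)$, and using the equality just proved together with its symmetric counterpart $\supp_R\Hom^*_\sfT(D,C) = \supp_R D \cap \supp_R C$, we see that both conditions $\Hom^n_\sfT(C,D) = 0$ for $n \gg 0$ and $\Hom^n_\sfT(D,C) = 0$ for $n \gg 0$ are equivalent to the single symmetric condition $\supp_R C \cap \supp_R D \subseteq \mcV(R^{\ges 1})$, whence they are equivalent to each other.

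The routine part is the commutative-algebra lemmas about finitely generated graded modules; the only mild subtlety is making sure the noetherian hypothesis is invoked correctly in each place so that all the relevant supports are specialization closed and the ``bounded in high degrees'' criterion applies — but there is no real obstacle here, since the substance is entirely contained in Theorem~\ref{thm:intersection}.
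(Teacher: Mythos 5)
Your argument is correct and follows essentially the same route as the paper's proof: both deduce the support equality from Theorem~\ref{thm:intersection} by using the noetherian hypothesis to identify $\Supp_R$ with $\supp_R$ for the finitely generated modules involved, and both obtain the Ext-vanishing symmetry from the resulting equality $\supp_R\Hom^*_\sfT(C,D)=\supp_R\Hom^*_\sfT(D,C)$ together with the criterion that a noetherian module over a non-negatively graded ring vanishes in high degrees iff its support lies in $\mcV(R^{\ges 1})$. The only difference is that you spell out the commutative-algebra facts which the paper delegates to citations.
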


\begin{proof}
In view of the noetherian hypothesis and \cite[Lemma 2.2(1),
Theorem~5.5(2)]{\bik:2008a}, the desired equality follows from
Theorem~\ref{thm:intersection}. It implies in particular that
\[
\supp_{R}\Hom^{*}_{\sfT}(C,D) = \supp_{R}\Hom^{*}_{\sfT}(D,C) \,.
\]
When $R^{i}=0$ holds for $i<0$ and $M$ is a noetherian $R$-module one
has $M^{n}=0$ for $n\gg0$ if and only if $\supp_{R}M\subseteq
\{\fp\in\Spec R\mid \fp\supseteq R^{\ges 1}\}$; see
\cite[Proposition 2.4]{Bergh/Iyengar/Krause/Oppermann:2009a}. The last
part of the corollary now follows from the equality above.
\end{proof}

There is a version of the preceding result where the objects $C$ and
$D$ need not be compact. This is the topic of the next theorem. As
preparation for its proof, and for later applications, we further
develop the material in \cite[Definition 4.8]{\bik:2008a}. Let $C$ be
a compact object in $\sfT$. For each injective $R$-module $I$, the
Brown representability theorem \cite{Keller:1994a,Neeman:1996} yields
an object $T_{C}(I)$ in $\sfT$ such that there is a natural
isomorphism:
\[
\Hom_{\sfT}(-,T_{C}(I)) \cong \Hom_R(\Hom^*_\sfT(C,-),I)\,.
\]
Moreover, the assignment $I\mapsto T_{C}(I)$ defines a functor
$T_{C}\col\Inj R\to\sfT$ from the category of injective $R$-modules to
$\sfT$.

\begin{proposition}
\label{prop:iobject}
Let $C$ be a compact object in $\sfT$. The functor $T_{C}\col\Inj
R\to\sfT$ preserves products. If the $R$-linear category $\sfT$ is
noetherian, each $I\in\Inj R$ satisfies:
\[
\supp_{R}T_{C}(I) = \supp_{R}C \cap \supp_{R} I =
\supp_{R}\End^{*}_{\sfT}(C) \cap \supp_{R} I\,.
\]
In particular, for each $\fp\in\Spec R$ the object $T_{C}(E(R/\fp))$
is in $\gam_{\fp}\sfT$.
\end{proposition}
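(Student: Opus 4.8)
The plan is to establish the three claims in order: first that $T_C$ preserves products, then the support formula, then the corollary about $T_C(E(R/\fp))$ lying in $\gam_\fp\sfT$.

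For the first claim, I would argue directly from the defining natural isomorphism $\Hom_\sfT(-,T_C(I))\cong\Hom_R(\Hom^*_\sfT(C,-),I)$. Given a family $(I_\lambda)$ of injective $R$-modules, apply $\Hom_\sfT(-,\prod_\lambda T_C(I_\lambda))$ and $\Hom_\sfT(-,T_C(\prod_\lambda I_\lambda))$ to an arbitrary object and compare: since $\Hom_R(-,\prod_\lambda I_\lambda)\cong\prod_\lambda\Hom_R(-,I_\lambda)$ and $\Hom_\sfT(-,\prod_\lambda T_C(I_\lambda))\cong\prod_\lambda\Hom_\sfT(-,T_C(I_\lambda))$, both sides represent the same functor, so Yoneda gives a natural isomorphism $T_C(\prod_\lambda I_\lambda)\cong\prod_\lambda T_C(I_\lambda)$.

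For the support formula, the main tool should be to reduce to the case $I=E(R/\fp)$. Every injective $R$-module is a coproduct (equivalently, since $R$ is noetherian, a direct sum that splits off summands) of modules $E(R/\fp)$, and $\supp_R$ behaves well with respect to coproducts on one side and $\Supp_R I=\bigcup$ over the indecomposable summands on the other; since $T_C$ is additive, $T_C(\bigoplus_\lambda I_\lambda)$ will relate to $\bigoplus_\lambda T_C(I_\lambda)$—though one must be careful, as $T_C$ is only a priori a product-preserving functor, so I would instead note that $\Hom_\sfT(C,-)$ commutes with coproducts ($C$ compact) and thus $\Hom_R(\Hom^*_\sfT(C,-),\bigoplus I_\lambda)$... actually here is the subtlety. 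The cleanest route: for a \emph{single} indecomposable $I=E(R/\fp)$, compute $\gam_\fq T_C(E(R/\fp))$ directly. One has $\Hom_\sfT(D,\gam_\fq T_C(E(R/\fp)))$ governed, via adjunction and the local cohomology machinery, by $\Hom_R(\Hom^*_\sfT(C,\gam_\fq D)$-type expressions paired into $E(R/\fp)$; using that $\Hom^*_\sfT(C,\gam_\fq D)$ is $\fq$-local and $\fq$-torsion (by \cite[Cor. 4.10]{\bik:2008a}), this pairing is nonzero for some $D$ precisely when $\fq=\fp$ and $\fp\in\supp_R C$. This identifies $T_C(E(R/\fp))\in\gam_\fp\sfT$ and shows $\supp_R T_C(E(R/\fp))=\{\fp\}$ if $\fp\in\supp_R C$ and $\emptyset$ otherwise, i.e. $\supp_R C\cap\mcV(\fp)\cap\{\fq\mid\fq\subseteq\fp\}=\supp_R C\cap\{\fp\}$. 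Then for a general injective $I=\bigoplus_\lambda E(R/\fp_\lambda)$, I would use that $T_C$, being representable-built via Brown representability and additive, sends this to $\bigoplus_\lambda T_C(E(R/\fp_\lambda))$ (the coproduct exists and the natural map is an isomorphism because $\Hom_\sfT(C,-)$ commutes with coproducts and each piece is handled by the single-summand case), whence $\supp_R T_C(I)=\bigcup_\lambda(\supp_R C\cap\{\fp_\lambda\})=\supp_R C\cap\Supp_R I$. Finally the noetherian hypothesis lets one replace $\supp_R C$ by $\supp_R\End^*_\sfT(C)$ via \cite[Theorem 5.5(2)]{\bik:2008a}, and $\Supp_R I$ by $\supp_R I$ since for injective—hence, here, direct sums of $E(R/\fp)$'s—the two notions of support coincide.

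The main obstacle I anticipate is the commutation of $T_C$ with coproducts of injectives: $T_C$ is constructed via Brown representability and is only obviously product-preserving, so verifying $T_C(\bigoplus_\lambda I_\lambda)\cong\bigoplus_\lambda T_C(I_\lambda)$ requires care. The key point to exploit is compactness of $C$: $\Hom^*_\sfT(C,-)$ commutes with coproducts, so applying $\Hom_\sfT(D,-)$ for $D$ compact to the natural comparison map $\bigoplus_\lambda T_C(I_\lambda)\to T_C(\bigoplus_\lambda I_\lambda)$ and using that $\Hom_R(M,-)$ commutes with filtered colimits when $M$ is finitely presented (available by the noetherian hypothesis) should show the map is an isomorphism after applying $\Hom_\sfT(D,-)$ for all compact $D$, hence an isomorphism since such $D$ generate $\sfT$. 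The last sentence of the proposition is then immediate: taking $I=E(R/\fp)$, the support computation gives $\supp_R T_C(E(R/\fp))\subseteq\{\fp\}$, and by \cite[Corollary 5.9]{\bik:2008a} (or the characterization of $\gam_\fp\sfT$ as $\fp$-local $\fp$-torsion objects) this forces $T_C(E(R/\fp))\in\gam_\fp\sfT$.
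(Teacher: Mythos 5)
Your proposal is essentially correct but follows a genuinely different route from the paper. The paper never decomposes $I$: it applies the defining isomorphism with a compact test object $D$ to get $\Hom^{*}_{\sfT}(D,T_{C}(I))\cong\Hom^{*}_{R}(\Hom^{*}_{\sfT}(C,D),I)$, uses the module-theoretic identity $\supp_{R}\Hom_{R}(M,I)=\supp_{R}M\cap\supp_{R}I$ for $M$ finitely generated and $I$ injective together with Corollary~\ref{cor:intersection} to rewrite this as $\supp_{R}C\cap\supp_{R}D\cap\supp_{R}I$, and then concludes by Lemma~\ref{lem:support-test}. This is shorter and uniform in $I$, requiring neither Matlis decomposition nor any commutation of $T_{C}$ with coproducts; on the other hand it invokes Corollary~\ref{cor:intersection}, and hence stratification, whereas your argument uses only the noetherian hypothesis actually stated in the proposition. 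Your worry about coproducts is resolved exactly as you suggest: for compact $D$ the comparison map $\bigoplus_{\lambda}T_{C}(I_{\lambda})\to T_{C}(\bigoplus_{\lambda}I_{\lambda})$ becomes an isomorphism after $\Hom_{\sfT}(D,-)$ because $\Hom^{*}_{\sfT}(C,D)$ is finitely generated, and compacts generate.

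Two small corrections to your computation. First, the pairing $\Hom_{R}(\Hom^{*}_{\sfT}(C,D),E(R/\fp))$ is nonzero for some compact $D$ precisely when $\fp\in\Supp_{R}C$, not a priori when $\fp\in\supp_{R}C$; you need the observation (valid under the noetherian hypothesis, since $\supp_{R}C=\supp_{R}\End^{*}_{\sfT}(C)$ is Zariski closed) that $\Supp_{R}C=\cl(\supp_{R}C)=\supp_{R}C$ for compact $C$. Second, your claim that $\Supp_{R}I$ and $\supp_{R}I$ coincide for injectives is false (for instance $\Supp_{\bbZ}E(\bbZ/p)=\{(0),(p)\}$ while $\supp_{\bbZ}E(\bbZ/p)=\{(p)\}$); but the union $\bigcup_{\lambda}\{\fp_{\lambda}\}$ you compute is exactly $\supp_{R}I$, not $\Supp_{R}I$, so the formula you want drops out directly and the detour through $\Supp_{R}I$ should simply be deleted.
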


\begin{proof}
It follows by construction that $T_{C}$ preserves products. For each
compact object $D$ in $\sfT$, there is an isomorphism of $R$-modules
\[
\Hom^{*}_{\sfT}(D,T_{C}(I))\cong
\Hom^{*}_{R}(\Hom^{*}_{\sfT}(C,D),I)\,.
\]
When $\sfT$ is noetherian, so that the $R$-module
$\Hom^{*}_{\sfT}(C,D)$ is finitely generated, the isomorphism above
gives the first equality below:
\begin{align*}
\supp_{R}\Hom^{*}_{\sfT}(D,T_{C}(I)) &=
\supp_{R}\Hom^{*}_{\sfT}(C,D)\cap \supp_{R}I\\ &=
\supp_{R}C\cap\supp_{R}D\cap \supp_{R}I\,.
\end{align*}
The second equality holds by
Corollary~\ref{cor:intersection}. Lemma~\ref{lem:support-test} below
then yields the first of the desired equalities; the second one holds
by \cite[Theorem 5.5(2)]{\bik:2008a}.
\end{proof}

The following lemma provides an alternative description of the support
of an object in $\sfT$. Note that $\sfT$ need not  be noetherian.

\begin{lemma}
\label{lem:support-test}
Let $X$ be an object in $\sfT$ and $\mcU$ a subset of $\supp_{R}\sfT$. If
\[
\supp_{R}\Hom^{*}_{\sfT}(C,X)=\mcU\cap \supp_{R}C
\]
holds for each compact object $C$, then $\supp_{R}X = \mcU$.
\end{lemma}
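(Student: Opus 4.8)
The plan is to deduce the equality $\supp_{R}X=\mcU$ from the identity
\[
\supp_{R}Y=\bigcup_{C\in\sfT^{\sfc}}\supp_{R}\Hom^{*}_{\sfT}(C,Y)\,,
\]
valid for every object $Y$ of $\sfT$. This is the homological (small support) analogue of the identity $\Supp_{R}Y=\bigcup_{C\in\sfT^{\sfc}}\Supp_{R}\Hom^{*}_{\sfT}(C,Y)$ used above, and it is part of the theory of \cite[\S5]{\bik:2008a}; cf. \cite[Theorem~5.15]{\bik:2008a}. Granting it, the lemma follows by a short computation.

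First I would record the auxiliary equality $\bigcup_{C\in\sfT^{\sfc}}\supp_{R}C=\supp_{R}\sfT$. Here $\subseteq$ is immediate, and for the reverse inclusion one uses that $\Ker\gam_{\fp}$ is a localizing subcategory of $\sfT$, since $\gam_{\fp}$ is exact and commutes with coproducts: if $\gam_{\fp}C=0$ for every compact object $C$, then $\Ker\gam_{\fp}$ contains a set of generators of $\sfT$ and hence equals $\sfT$, that is to say $\fp\notin\supp_{R}\sfT$. Applying the displayed identity to $X$, then the hypothesis on $\supp_{R}\Hom^{*}_{\sfT}(C,X)$, then distributing the intersection, and finally the auxiliary equality together with the assumption $\mcU\subseteq\supp_{R}\sfT$, one obtains
\begin{gather*}
\supp_{R}X=\bigcup_{C\in\sfT^{\sfc}}\supp_{R}\Hom^{*}_{\sfT}(C,X)=\bigcup_{C\in\sfT^{\sfc}}\bigl(\mcU\cap\supp_{R}C\bigr)\\
=\mcU\cap\Bigl(\,\bigcup_{C\in\sfT^{\sfc}}\supp_{R}C\,\Bigr)=\mcU\cap\supp_{R}\sfT=\mcU\,.
\end{gather*}

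The substance of the argument is thus concentrated in the displayed identity, and this is where I expect the main obstacle to lie for anyone seeking a self-contained proof. One can begin as follows. Fix a prime $\fp$. By Lemma~\ref{lem:residues}(1)--(2) one has $\fp\in\supp_{R}X$ if and only if $X(\fp)\ne0$, and since $X(\fp)$ lies in $\gam_{\fp}\sfT$, for each compact $C$ the module $\Hom^{*}_{\sfT}(C,X(\fp))$ is $\fp$-local and $\fp$-torsion, hence nonzero exactly when $\fp$ lies in its support. Using Proposition~\ref{prop:cohlocal} one identifies this support with $\supp_{R}\Hom^{*}_{\sfT}(C,\kos X{\fp})\cap\{\fq\subseteq\fp\}$, so that $\fp\in\supp_{R}X$ iff $\fp\in\supp_{R}\Hom^{*}_{\sfT}(C,\kos X{\fp})$ for some compact $C$. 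From $\kos X{\fp}\in\Thick_{\sfT}(X)$ and standard properties of homological support of graded $R$-modules one gets the inclusion $\supp_{R}\Hom^{*}_{\sfT}(C,\kos X{\fp})\subseteq\mcV(\fp)\cap\supp_{R}\Hom^{*}_{\sfT}(C,X)$, and this already delivers $\supp_{R}X\subseteq\mcU$ with no further input. The hard part is the matching reverse containment: establishing $\mcU\subseteq\supp_{R}X$ requires knowing that a prime of $\Hom^{*}_{\sfT}(C,X)$ persists into $\Hom^{*}_{\sfT}(C,\kos X{\fp})$ --- equivalently, that the Koszul construction in $\sfT$ is compatible, up to equality of supports, with the one on the $R$-module $\Hom^{*}_{\sfT}(C,X)$. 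Without a noetherian hypothesis on $\sfT$ this is precisely the nonformal input one imports from \cite{\bik:2008a}, and it is what keeps the lemma from being a formality.
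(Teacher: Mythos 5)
Your first half is fine as far as the inclusion $\supp_{R}X\subseteq\mcU$ is concerned: that containment only needs $\supp_{R}X\subseteq\bigcup_{C\in\sfT^{\sfc}}\supp_{R}\Hom^{*}_{\sfT}(C,X)$, which is what \cite[Theorem~5.2]{\bik:2008a} supplies, and your sketch via $X(\fp)$ and the Koszul objects recovers it. The gap is in the reverse inclusion. Your computation rests on the full identity $\supp_{R}Y=\bigcup_{C}\supp_{R}\Hom^{*}_{\sfT}(C,Y)$, and in particular on the containment $\supp_{R}\Hom^{*}_{\sfT}(C,X)\subseteq\supp_{R}X$ for \emph{every} compact $C$. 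That containment is not part of the general theory: \cite[Theorem~5.15]{\bik:2008a}, which you cite, only controls specialization closures (it gives $\Supp_{R}X=\cl(\supp_{R}X)$, i.e.\ the ``big support'' statement), and the small-support statement you need is essentially Corollary~\ref{cor:intersection} of this paper, which is proved only under the additional hypotheses that $\sfT$ is noetherian and stratified --- hypotheses deliberately absent from this lemma. Your closing paragraph concedes that this is ``the hard part'' and ``the nonformal input one imports,'' but no such input exists at this level of generality, so the argument is not complete. The obstruction is concrete: for a single generator $r$ of $\fp$ the module $\Hom^{*}_{\sfT}(C,\kos Xr)$ and the Koszul homology of $r$ on $\Hom^{*}_{\sfT}(C,X)$ are only related by an extension of the same kernel and cokernel, and small support can drop under extensions, so a prime of $\Hom^{*}_{\sfT}(C,X)$ need not persist into $\Hom^{*}_{\sfT}(\kos C{\fp},X)$ when $\fp$ has several generators.

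The paper avoids this by never asking for the containment for arbitrary $C$. Given $\fp\in\mcU\subseteq\supp_{R}\sfT$, choose a compact $D$ with $\fp\in\supp_{R}D$ (your argument that $\bigcup_{C}\supp_{R}C=\supp_{R}\sfT$ is correct and is exactly what is needed here), pass to the compact object $\kos D{\fp}$, which satisfies $\fp\in\supp_{R}(\kos D{\fp})$ by Lemma~\ref{lem:kos-props}, and apply the hypothesis of the lemma to \emph{this} object: it yields $\fp\in\supp_{R}\Hom^{*}_{\sfT}(\kos D{\fp},X)$. For these particular test objects the implication ``$\fp\in\supp_{R}\Hom^{*}_{\sfT}(\kos D{\fp},X)\Rightarrow\fp\in\supp_{R}X$'' is precisely \cite[Proposition~5.12]{\bik:2008a}, with no stratification or noetherian hypothesis. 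So the fix is to exploit the freedom to choose $C$ in the hypothesis, rather than to seek a support identity valid for all compact objects at once.
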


\begin{proof}
It follows from \cite[Theorem 5.2]{\bik:2008a} that
$\supp_{R}X\subseteq \mcU$.

Fix $\fp$ in $\mcU$ and choose a compact object $D$ with
$\fp$ in $\supp_{R}D$. Then $\fp$ is in $\supp_{R}(\kos D\fp)$, so the
hypothesis yields that $\fp$ is in $\supp_{R}\Hom^{*}_{\sfT}(\kos
D\fp,X)$. Hence $\fp$ belongs to $\supp_{R}X$, by \cite[Proposition
5.12]{\bik:2008a}.
\end{proof}

For a compact object $C$, the functor $\Hom^{*}_{\sfT}(C,-)$ vanishes
on $\Loc_{\sfT}(Y)$ if and only if $\Hom^{*}(C,Y)=0$. Using this
observation, it is easy to verify that the theorem below is an
extension of Corollary~\ref{cor:intersection}. Compare it also with
\cite[Corollary 5.8]{\bik:2008a}.

\begin{theorem}
\label{thm:orthogonality}
Let $\sfT$ be an $R$-linear triangulated category that is noetherian
and stratified by $R$.  For any $X$ and $Y$ in $\sfT$ the conditions
below are equivalent:
\begin{enumerate}[{\quad\rm(1)}]
\item $\Hom^{*}_{\sfT}(X,Y')=0$ for any $Y'$ in $\Loc_{\sfT}(Y)$;
\item $\cl(\supp_{R}X) \cap \supp_{R} Y =\emptyset$.
\end{enumerate}
\end{theorem}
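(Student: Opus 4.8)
The plan is to prove the two implications separately, exploiting the fact that, for $X$ not necessarily compact, we still have good control via the functors $T_C$ of Proposition~\ref{prop:iobject} and via the local-global principle. First I would dispose of (1)$\implies$(2) by contraposition: suppose $\fp$ lies in $\cl(\supp_R X)\cap\supp_R Y$; since $\cl(\supp_R X)=\Supp_R X$, there is some $\fq\subseteq\fp$ with $\fq\in\supp_R X$, and I would like to produce an object $Y'\in\Loc_\sfT(Y)$ receiving a nonzero map from $X$. The natural candidate is built from $\gam_\fq Y$, which is nonzero since $\fq\subseteq\fp$ lies in $\supp_R Y$ (supports are specialization closed downward in the sense relevant here — more precisely $\fp\in\supp_R Y$ forces some minimal $\fq'\subseteq\fp$ in $\supp_R Y$, and one can further arrange $\fq$ so that $\gam_\fq Y\neq 0$; I would take $\fq$ to be in $\supp_R X\cap\supp_R Y$ with $\fq\subseteq\fp$, using that both supports meet $\mcV$ below $\fp$). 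Then $\gam_\fq X$ and $\gam_\fq Y$ are both nonzero objects of $\gam_\fq\sfT$, so by stratification (S2) and Lemma~\ref{lem:loc-minimal} one has $\Hom^*_\sfT(\gam_\fq X,\gam_\fq Y)\neq 0$. To convert this into a nonzero map out of $X$ itself, I would use the chain of adjunctions from the proof of Theorem~\ref{thm:intersection} in reverse: $\gam_\fq Y\in\Loc_\sfT(Y)$ by Proposition~\ref{prop:kosproperties}, and $\Hom^*_\sfT(\gam_\fq X,\gam_\fq Y)\cong\Hom^*_\sfT(X_\fq,\gam_\fq Y)\cong\Hom^*_\sfT(X,\gam_\fq Y)$ since $\gam_\fq Y$ is $\fq$-local and $\fq$-torsion (the first isomorphism needs $\gam_\fq X\in\Loc_\sfT(X_\fq)$; the second needs $\Hom^*_\sfT(X,L_{\mcZ(\fq)}X\text{-complement})$ vanishing, i.e.\ that $\gam_{\mcV(\fq)}$ and $L_{\mcZ(\fq)}$ behave well — this is exactly the argument already run for compact $C$, and it goes through for arbitrary $X$ because only $Y$-side localization is used). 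Setting $Y'=\gam_\fq Y$ gives the failure of (1).

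For (2)$\implies$(1), I would argue directly that $\Hom^*_\sfT(X,Y')=0$ for all $Y'\in\Loc_\sfT(Y)$. It suffices to treat $Y'=Y$, since $\supp_R Y'\subseteq\supp_R Y$ for $Y'\in\Loc_\sfT(Y)$ (supports of objects in a localizing subcategory are contained in the support of the subcategory, hence here in $\supp_R Y$ — more carefully, $\Loc_\sfT(Y)$ has support contained in $\cl(\supp_R Y)$, but one checks $\cl(\supp_R X)\cap\supp_R Y=\emptyset$ already gives $\cl(\supp_R X)\cap\cl(\supp_R Y)=\emptyset$ by symmetry of the disjointness of two specialization-closed sets? no — I must be careful: $\cl(\supp_R X)\cap\supp_R Y=\emptyset$ does imply $\supp_R X\cap\cl(\supp_R Y)=\emptyset$ is false in general, so I would instead replace $Y$ by $Y'$ throughout and note the hypothesis is inherited because $\cl(\supp_R X)\cap\supp_R Y'\subseteq\cl(\supp_R X)\cap\supp_R Y=\emptyset$). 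Now fix $Y'\in\Loc_\sfT(Y)$ with $\cl(\supp_R X)\cap\supp_R Y'=\emptyset$. Apply the local-global principle (S1): $X\in\Loc_\sfT(\{\gam_\fp X\mid\fp\in\Spec R\})$, so it is enough to show $\Hom^*_\sfT(\gam_\fp X,Y')=0$ for each $\fp$. If $\gam_\fp X=0$ this is trivial; otherwise $\fp\in\supp_R X$. For such $\fp$, I claim $\Hom^*_\sfT(\gam_\fp X,Y')=0$. Since $\gam_\fp X$ is $\fp$-torsion, i.e.\ in $\sfT_{\mcV(\fp)}$, and hence $\gam_\fp X\cong\gam_{\mcV(\fp)}\gam_\fp X$, we get $\Hom^*_\sfT(\gam_\fp X,Y')\cong\Hom^*_\sfT(\gam_\fp X,\gam_{\mcV(\fp)}Y'\text{-related object})$ — more precisely, using the localization triangle $\gam_{\mcV(\fp)}Y'\to Y'\to L_{\mcV(\fp)}Y'\to$, it suffices to know $\Hom^*_\sfT(\gam_\fp X,L_{\mcV(\fp)}Y')=0$ and $\Hom^*_\sfT(\gam_\fp X,\gam_{\mcV(\fp)}Y')=0$. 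The first holds by \cite[Corollary 5.8]{\bik:2008a} since $\gam_\fp X$ is $\fp$-torsion and $L_{\mcV(\fp)}Y'$ has support in $\Spec R\setminus\mcV(\fp)$. For the second, $\gam_\fp X$ is also $\fp$-local, so $\Hom^*_\sfT(\gam_\fp X,\gam_{\mcV(\fp)}Y')$ is computed after applying $(-)_\fp$, i.e.\ $\Hom^*_\sfT(\gam_\fp X,(\gam_{\mcV(\fp)}Y')_\fp)=\Hom^*_\sfT(\gam_\fp X,\gam_\fp Y')$; and $\gam_\fp Y'=0$ because $\fp\in\supp_R X\subseteq\cl(\supp_R X)$ which is disjoint from $\supp_R Y'$. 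Hence $\Hom^*_\sfT(\gam_\fp X,Y')=0$ for all $\fp$, and the local-global principle gives $\Hom^*_\sfT(X,Y')=0$.

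The main obstacle I anticipate is the bookkeeping around which localization one is allowed to apply when $X$ is not compact: the clean statement $\Hom^*_\sfT(X,Y)_\fp\cong\Hom^*_\sfT(X,Y_\fp)$ of Proposition~\ref{prop:cohlocal} requires $X$ compact, so in the (1)$\implies$(2) direction I must route everything through localization on the $Y$-variable and torsion/local properties of $\gam_\fq Y$ (which are unconditional), never localizing $X$ itself except to pass from $\gam_\fq X$ to $X_\fq$ via the containment $\gam_\fq X\in\Loc_\sfT(X_\fq)$ rather than via a Hom-isomorphism. Likewise in (2)$\implies$(1) the reduction to $Y'=Y$ needs the correct monotonicity statement for supports inside a localizing subcategory, which I would extract from Theorem~\ref{thm:classifying} or directly from the fact that $\supp_R$ sends $\Loc_\sfT(Y)$ into subsets of $\supp_R\sfT$ compatibly; and the disjointness hypothesis must be verified to pass to each $Y'$, which it does since $\supp_R Y'\subseteq\supp_R\Loc_\sfT(Y)$ and the latter, when intersected with the specialization-closed set $\cl(\supp_R X)$, is empty iff $\cl(\supp_R X)\cap\supp_R Y=\emptyset$ — this last equivalence is the one delicate point, and I would prove it by observing $\supp_R\Loc_\sfT(Y)$ and $\supp_R Y$ have the same specialization closure (both equal $\Supp_R$ of the subcategory), so disjointness from a specialization-closed set is equivalent for the two.
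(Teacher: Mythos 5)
The direction (1)$\implies$(2) of your proposal has a genuine gap. Your contrapositive argument hinges on producing a prime $\fq\subseteq\fp$ lying in $\supp_{R}X\cap\supp_{R}Y$, so that minimality of $\gam_{\fq}\sfT$ and Lemma~\ref{lem:loc-minimal} yield a nonzero map. But such a common prime need not exist: the hypothesis $\fp\in\cl(\supp_{R}X)\cap\supp_{R}Y$ only gives some $\fq\subseteq\fp$ in $\supp_{R}X$ and gives $\fp$ itself in $\supp_{R}Y$; since support is not closed under generalization, your assertion that ``$\gam_{\fq}Y$ is nonzero since $\fq\subseteq\fp$ lies in $\supp_{R}Y$'' is a non sequitur, and the hedge ``both supports meet below $\fp$'' does not produce a \emph{common} prime. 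Concretely, in $\sfD(\bbZ)$ take $X=\bbQ$ and $Y=\bbZ/p$: then $\cl(\supp_{\bbZ}X)\cap\supp_{\bbZ}Y=\{(p)\}\neq\emptyset$ while $\supp_{\bbZ}X\cap\supp_{\bbZ}Y=\emptyset$, so no prime carries both $\gam_{\fq}X\neq0$ and $\gam_{\fq}Y\neq0$ and your minimality argument never gets started; the object witnessing the failure of (1) is the Pr\"ufer group $E(\bbZ/p)$, which lies in $\gam_{(p)}\sfD(\bbZ)$ where $\gam_{(p)}X=0$. The paper's proof works instead at the prime $\fp\in\supp_{R}Y$ itself, via the Brown-representability objects $T_{C}(E(R/\fp))$ of Proposition~\ref{prop:iobject}: these lie in $\gam_{\fp}\sfT\subseteq\Loc_{\sfT}(Y)$ by Theorem~\ref{thm:classifying}, and $\Hom^{*}_{\sfT}(X,T_{C}(E(R/\fp)))\cong\Hom_{R}(\Hom^{*}_{\sfT}(C,X),E(R/\fp))$, so vanishing for every compact $C$ forces $\Hom^{*}_{\sfT}(C,X)_{\fp}=0$ for all $C$, i.e.\ $\fp\notin\Supp_{R}X=\cl(\supp_{R}X)$. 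This is also exactly where the noetherian hypothesis enters (to place $T_{C}(E(R/\fp))$ in $\gam_{\fp}\sfT$), a hypothesis your outline never uses.

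In (2)$\implies$(1) your conclusion is reachable but one step is wrongly justified: for a non-compact $\fp$-local object $W$ it is false that $\Hom^{*}_{\sfT}(W,Z)\cong\Hom^{*}_{\sfT}(W,Z_{\fp})$ --- the same example shows $\Hom_{\sfD(\bbZ)}(\bbQ,E(\bbZ/p))\neq0$ although $E(\bbZ/p)_{(0)}=0$ --- and this is precisely the pitfall you flag in your closing paragraph and then invoke anyway. The step is fortunately unnecessary: under (2), for $\fp\in\supp_{R}X$ one has $\mcV(\fp)\cap\supp_{R}Y'=\emptyset$, hence $\gam_{\mcV(\fp)}Y'=0$ outright. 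More efficiently, the paper disposes of this whole direction in one line: $\supp_{R}Y'\subseteq\supp_{R}Y$ for $Y'\in\Loc_{\sfT}(Y)$ because each $\Ker\gam_{\fp}$ is localizing, and then the orthogonality property \cite[Corollary~5.8]{\bik:2008a} applied directly to $X$ and $Y'$ gives the vanishing; no local-global decomposition, stratification, or noetherianity is needed here.
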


\begin{proof}
(1)$\implies$(2): Let $\fp$ be a point in $\supp_{R}Y$ and $C$ a
compact object in $\sfT$. Proposition~\ref{prop:iobject} yields that
$T_{C}(E(R/\fp))$ is in $\gam_{\fp}\sfT$, and hence also in
$\Loc_{\sfT}(Y)$; the last assertion holds by
Theorem~\ref{thm:classifying}. This explains the equality below:
\[
\Hom^{*}_{R}(\Hom^{*}_{\sfT}(C,X),E(R/\fp))\cong
\Hom^{*}_{\sfT}(X,T_{C}(E(R/\fp)))= 0\,,
\]
while the isomorphism follows from the definition of $T_{C}$. Thus
$\Hom^{*}_{\sfT}(C,X)_{\fp}=0$.  Since $C$ was arbitrary, this means
that $\fp$ is not in $\cl(\supp_{R}X)$.

(2)$\implies$(1): One has $\supp_{R}Y'\subseteq \supp_{R}Y$ for $Y'$
   in $\Loc_{\sfT}(Y)$, since the functor $\gam_{\fp}$ is exact and
   preserves coproducts. The orthogonality property of supports,
   \cite[Corollary 5.8]{\bik:2008a} thus implies that if condition (2)
   holds, then $\Hom^{*}_{\sfT}(X,Y')=0$.
\end{proof}

Recall that the left orthogonal subcategory of $\sfS$, denoted
${}^{\perp}\sfS$, is the localizing subcategory $\{X\in \sfT\mid
\Hom^{*}_{\sfT}(X,Y)=0\text{ for all $Y\in\sfS$}\}$. As a
straightforward consequence of Theorem~\ref{thm:orthogonality} one
obtains a description of the support of the left orthogonal of a
localizing category, answering a question raised by
Rickard.\footnote{After a talk by Iyengar at the workshop `Homological
methods in group theory', MSRI 2008.}

\begin{corollary}
\label{cor:rickard}
For each localizing subcategory $\sfS$ of $\sfT$ the following
equality holds: $\supp_{R}({}^{\perp}\sfS) = \{\fp\in\supp_{R}\sfT\mid
\mcV(\fp)\cap \supp_{R}\sfS=\emptyset\}$.\qed
\end{corollary}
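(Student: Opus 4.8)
The plan is to combine the orthogonality criterion of Theorem~\ref{thm:orthogonality} with the classification of localizing subcategories in Theorem~\ref{thm:classifying}; both are available since $\sfT$ is noetherian and stratified by $R$. Set
\[
\mcW=\{\fp\in\supp_{R}\sfT\mid \mcV(\fp)\cap \supp_{R}\sfS=\emptyset\}\,,
\]
the subset of $\supp_{R}\sfT$ appearing on the right-hand side of the asserted equality. The strategy is to prove that ${}^{\perp}\sfS=\supp_R^{-1}\mcW$, and then to apply Theorem~\ref{thm:classifying} to obtain $\supp_R({}^{\perp}\sfS)=\supp_R(\supp_R^{-1}\mcW)=\mcW$, since $\mcW$ is a subset of $\supp_{R}\sfT$.

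To identify ${}^{\perp}\sfS$ with $\supp_R^{-1}\mcW$, fix $X\in\sfT$. First I would observe that, because $\sfS$ is a localizing subcategory, $\Loc_\sfT(Y)\subseteq\sfS$ for every $Y\in\sfS$; hence $X\in{}^{\perp}\sfS$ if and only if $\Hom^{*}_{\sfT}(X,Y')=0$ for all $Y\in\sfS$ and all $Y'\in\Loc_\sfT(Y)$. By Theorem~\ref{thm:orthogonality}, for a fixed $Y$ this vanishing is equivalent to $\cl(\supp_{R}X)\cap \supp_{R}Y=\emptyset$. Letting $Y$ range over $\sfS$ and using $\supp_{R}\sfS=\bigcup_{Y\in\sfS}\supp_{R}Y$, it follows that $X\in{}^{\perp}\sfS$ if and only if $\cl(\supp_{R}X)\cap \supp_{R}\sfS=\emptyset$. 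Since $\cl(\supp_{R}X)=\bigcup_{\fp\in\supp_{R}X}\mcV(\fp)$, this last condition holds if and only if $\mcV(\fp)\cap\supp_{R}\sfS=\emptyset$ for every $\fp\in\supp_{R}X$, i.e.\ $\supp_{R}X\subseteq\mcW$, where one uses that $\supp_{R}X\subseteq\supp_{R}\sfT$ by definition of $\supp_{R}\sfT$. Thus ${}^{\perp}\sfS=\supp_R^{-1}\mcW$, which is what was needed.

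I do not expect a genuine obstacle here: the argument is purely formal once Theorems~\ref{thm:orthogonality} and~\ref{thm:classifying} are in hand. The two points deserving a little care are the reduction from ``$Y$ in $\sfS$'' to ``$Y'$ in $\Loc_\sfT(Y)$'' (needed in order to apply Theorem~\ref{thm:orthogonality}, and immediate since $\sfS$ is localizing) and the unwinding of the specialization closure $\cl(\supp_{R}X)$ as the union of the closed sets $\mcV(\fp)$ with $\fp\in\supp_{R}X$, which is exactly what converts the orthogonality condition into membership in $\supp_R^{-1}\mcW$.
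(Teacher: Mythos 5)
Your argument is correct and is exactly the intended one: the paper labels this a ``straightforward consequence of Theorem~\ref{thm:orthogonality}'' and your proof simply carries out that deduction, applying the orthogonality criterion to each $Y\in\sfS$ (legitimate since $\Loc_\sfT(Y)\subseteq\sfS$), unwinding $\cl(\supp_RX)=\bigcup_{\fp\in\supp_RX}\mcV(\fp)$, and finishing with $\supp_R(\supp_R^{-1}\mcW)=\mcW$ from Theorem~\ref{thm:classifying}. No issues.
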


\begin{remark}
In the context of Theorem~\ref{thm:orthogonality}, for any compact
object $C$ one has
\[
\Hom^{*}_{\sfT}(C,Y)=0 \text{ if and only if }\supp_{R}C\cap \supp_{R}
Y =\emptyset\,.
\]
The next example shows that one cannot do away entirely with the
hypothesis that $C$ is compact; the point being that
$\Hom^{*}_{\sfT}(X,Y)=0$ does not imply that $\Hom^{*}(X,-)$ is zero
on $\Loc_{\sfT}(Y)$, unless $X$ is compact.
\end{remark}

\begin{example}
\label{ex:orthogonality}
Let $A$ be a complete local domain and $Q$ its field of fractions. For
example, take $A$ to be the completion of $\bbZ$ at a prime $p$. It
follows from a result of Jensen~\cite[Theorem 1]{Jensen:1972a} that
$\Ext^{*}_{A}(Q,A)=0$. Thus, with $\sfT$ the derived category of $A$,
one gets $\supp_{A}\Hom^{*}_{\sfT}(Q,A)=\emptyset$ while
$\supp_{A}Q\cap \supp_{A}A$ consists of the zero ideal.  Note that $Q$
is in $\Loc_{\sfT}(A)$, so there is no contradiction with
Theorem~\ref{thm:orthogonality}.
\end{example}

\section{Classifying thick subcategories}
\label{sec:Classifying thick subcategories}
In this section we prove that when $\sfT$ is noetherian and stratified
by $R$ its thick subcategories of compact objects are parameterized by
specialization closed subsets of $\supp_{R}\sfT$. As before, $R$ is a
graded-commutative noetherian ring and $\sfT$ is a compactly generated
$R$-linear triangulated category.

\subsection*{Thick subcategories}
One can deduce the next result from the classification of localizing
subcategories, Theorem~\ref{thm:classifying}, as in
\cite[\S3]{Neeman:1992a}. We give a different proof.

\begin{theorem}
\label{thm:classifying-thick}
Let $\sfT$ be a compactly generated $R$-linear triangulated category
that is noetherian and stratified by $R$. The map
\[
\left\{
\begin{gathered}
  \text{Thick subcategories}\\ \text{of $\sfT^c$}
\end{gathered}\;
\right\} \xymatrix@C=3pc{ \ar[r]^-{\supp_{R}} &{}} \left\{
\begin{gathered}
\text{Specialization closed}\\ \text{subsets of $\supp_R\sfT$}
\end{gathered}\;
\right\}
\]
is bijective. The inverse map sends a specialization closed subset
$\mcV$ of $\Spec R$ to the subcategory $\{C\in \sfT^{c}\mid
\supp_{R}C\subseteq \mcV\}$.
\end{theorem}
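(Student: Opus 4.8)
The plan is to factor the map through the classification of localizing subcategories already established in Theorem~\ref{thm:classifying}. Given a thick subcategory $\sfC$ of $\sfT^{c}$, the first step is to pass to the localizing subcategory $\Loc_{\sfT}(\sfC)$ it generates in $\sfT$, and set $\mcV = \supp_{R}\sfC = \supp_{R}\Loc_{\sfT}(\sfC)$. Since $\sfT$ is noetherian, for each compact $C$ the set $\supp_{R}C = \supp_{R}\End^{*}_{\sfT}(C)$ is Zariski closed in $\Spec R$ (by \cite[Lemma~2.2(1), Theorem~5.5(2)]{\bik:2008a}), so $\mcV$ is a union of closed sets, hence specialization closed; thus the map is well-defined into the stated target. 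It remains to produce a two-sided inverse, namely $\mcV\mapsto \sfT^{c}\cap \supp_{R}^{-1}\mcV = \{C\in\sfT^{c}\mid \supp_{R}C\subseteq\mcV\}$.

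The easy direction is that starting from a specialization closed $\mcV\subseteq\supp_{R}\sfT$ and forming $\sfC_{\mcV} := \{C\in\sfT^{c}\mid \supp_{R}C\subseteq\mcV\}$, one recovers $\supp_{R}\sfC_{\mcV} = \mcV$: the inclusion $\subseteq$ is immediate, and for $\supseteq$ one fixes $\fp\in\mcV$, picks a compact $D$ with $\fp\in\supp_{R}D$ (possible since $\fp\in\supp_{R}\sfT$ and the compacts generate), and replaces $D$ by a Koszul object $\kos{D}{\fa}$ for a finite generating sequence $\fa$ of an ideal with $\mcV(\fa)\subseteq\mcV$ and $\fp\in\mcV(\fa)$; by Lemma~\ref{lem:kos-props} this Koszul object is compact, lies in $\sfC_{\mcV}$, and has $\fp$ in its support. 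The harder direction is that $\sfC = \sfC_{\mcV}$ when $\mcV=\supp_{R}\sfC$, i.e.\ that a compact object $D$ with $\supp_{R}D\subseteq\supp_{R}\sfC$ already lies in $\sfC$. This is where the work is, and the strategy is: by Theorem~\ref{thm:classifying} (classification of localizing subcategories) one has $D\in\supp_{R}^{-1}(\supp_{R}\Loc_{\sfT}(\sfC)) = \Loc_{\sfT}(\sfC)$, so $D$ belongs to the \emph{localizing} subcategory generated by $\sfC$; the point is to upgrade this to membership in the \emph{thick} subcategory $\Thick_{\sfT^{c}}(\sfC)$. For this I would invoke the classical Neeman/Ravenel–style fact that if $\sfC$ is a thick subcategory of $\sfT^{c}$ closed under the relevant operations, then $\Loc_{\sfT}(\sfC)\cap\sfT^{c} = \Thick_{\sfT^{c}}(\sfC)$ — equivalently, a compact object in the localizing subcategory generated by a set of compacts is built from them in finitely many steps. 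Concretely, $\Loc_{\sfT}(\sfC)$ is a compactly generated (even smashing) localizing subcategory whose compact objects are exactly $\Thick_{\sfT^{c}}(\sfC)$; since $D$ is compact in $\sfT$ and lies in $\Loc_{\sfT}(\sfC)$, it is compact there, hence in $\Thick_{\sfT^{c}}(\sfC) = \sfC$.

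The main obstacle I anticipate is precisely justifying this last compactness/thickness reduction cleanly in the present abstract setting. The subtlety is that $\Loc_{\sfT}(\sfC)$ being generated by a \emph{set} of compact objects of $\sfT$ makes it compactly generated with $(\Loc_{\sfT}(\sfC))^{c} = \Thick_{\sfT^{c}}(\sfC)$ by Neeman's localization theory — but one must be careful that an object compact in the subcategory is genuinely compact in $\sfT$ and conversely; this is standard (Neeman, \cite{Neeman:2001a}) when the subcategory is generated by objects that are compact in the ambient category, which is our case since $\sfC\subseteq\sfT^{c}$. Once that is in place, injectivity of $\supp_{R}$ on thick subcategories follows too: if $\supp_{R}\sfC = \supp_{R}\sfC'$ then $\sfC = \sfC_{\mcV} = \sfC'$ for $\mcV$ the common value. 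A secondary check, needed for the target to be $\supp_{R}\sfT$ rather than all of $\Spec R$, is that $\supp_{R}C\subseteq\supp_{R}\sfT$ automatically for compact $C$, which is clear. Assembling these pieces gives the bijection with the asserted inverse.
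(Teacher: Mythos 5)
Your proof is correct, but for the injectivity step it follows exactly the route that the authors mention and then deliberately decline: just before the theorem they remark that the result ``can be deduced from the classification of localizing subcategories, Theorem~\ref{thm:classifying}, as in \cite[\S3]{Neeman:1992a}. We give a different proof.'' Concretely, you show that a compact $D$ with $\supp_{R}D\subseteq\supp_{R}\sfC$ lies in $\Loc_{\sfT}(\sfC)$ by invoking the bijection of Theorem~\ref{thm:classifying}, hence the full stratification hypothesis (S1) and (S2); the paper instead obtains the same membership from Theorem~\ref{thm:intersection}, applied to the localization functor $L$ with $\Ker L=\Loc_{\sfT}(\sfC)$: from $\Hom^{*}_{\sfT}(C,LD)=0$ for all $C\in\sfC$ one gets $\Supp_{R}C\cap\Supp_{R}LD=\emptyset$, which forces $LD=0$ because $\Supp_{R}LD\subseteq\cl(\supp_{R}\sfC)$. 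The payoff of the paper's route is the remark recorded after the statement: injectivity of $\supp_{R}$ needs only condition (S2), while surjectivity needs only the noetherian hypothesis; your route does not separate the hypotheses in this way. Both arguments terminate identically, with Neeman's lemma (\cite[Lemma~2.2]{Neeman:1992a}, i.e.\ the localization-theorem fact you cite that $\Loc_{\sfT}(\sfC)\cap\sfT^{c}=\sfC$ for a thick subcategory $\sfC$ of $\sfT^{c}$) to upgrade $D\in\Loc_{\sfT}(\sfC)$ to $D\in\sfC$, and you correctly identify this as the delicate point and justify it adequately. Your surjectivity argument, producing $\kos{D}{\fp}$ and applying Lemma~\ref{lem:kos-props}, is a minor variant of the paper's, which instead quotes $\Loc_{\sfT}(\{\kos{C}{\fp}\})=\sfT_{\mcV}$ and $\supp_{R}\sfT_{\mcV}=\mcV\cap\supp_{R}\sfT$; both are fine.
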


Observe that in the proof the injectivity of the map $\supp_{R}$
requires only that $\sfT$ satisfies the stratification condition (S2),
while the surjectivity uses only the hypothesis that $\sfT$ is
noetherian.

\begin{proof}
First we verify that $\supp_{R}\sfC$ is specialization closed for any
thick subcategory $\sfC$ of $\sfT^{c}$. For any compact object $C$ the
$R$-module $\End^*_\sfT(C)$ is finitely generated, and this implies
$\supp_R C=\supp_R\End_\sfT^*(C)$, by \cite[Theorem~5.5]{\bik:2008a}.
Thus $\supp_RC$ is a closed subset of $\Spec R$, and therefore
$\supp_R\sfC$ is specialization closed.

To verify that the map $\supp_{R}$ is surjective, let $\mcV$ be a
specialization closed subset of $\supp_R\sfT$ and set $\sfC=\{\kos
C\fp\mid C\in \sfT^c,\, \fp\in\mcV\}$. One then has that
$\Loc_\sfT(\sfC)=\sfT_\mcV$ by \cite[Theorem~6.4]{\bik:2008a}, and
therefore the following equalities hold
\[
\supp_{R}\sfC=\supp_R\sfT_\mcV=\mcV\cap\supp_R\sfT=\mcV\,.
\]

It remains to prove that $\supp_{R}$ is injective. Let $\sfC$ be a
thick subcategory of $\sfT$ and set $\sfD=\{D\in\sfT^c\mid\supp_R
D\subseteq\supp_{R}\sfC\}$. We need to show that
$\sfC=\sfD$. Evidently, an inclusion $\sfC\subseteq\sfD$ holds. To
establish the other inclusion, let $L\col\sfT\to\sfT$ be the
localization functor with $\Ker L=\Loc_\sfT(\sfC)$; see
Lemma~\ref{lem:loc-set} for its existence. Let $D$ be an object in
$\sfD$. Each object $C$ in $\sfC$ satisfies $\Hom_\sfT^*(C,LD)=0$, so
Theorem~\ref{thm:intersection} implies $\Supp_R C\cap\Supp_R
LD=\emptyset$. Hence $LD=0$, that is to say, $D$ belongs to
$\Loc_\sfT(\sfC)$.  It then follows from
\cite[Lemma~2.2]{Neeman:1992a} that $D$ is in $\sfC$.
\end{proof}

\subsection*{Smashing subcategories}
Next we prove that when $\sfT$ is stratified and noetherian, the
telescope conjecture~\cite{Ravenel:1987a} holds for $\sfT$.  In
preparation for its proof, we record an elementary observation.

\begin{lemma}
\label{lem:smash}
Let $\fp\subseteq\fq$ be prime ideals in $\Spec R$. The injective hull
$E(R/\fp)$ of $R/\fp$ is a direct summand of a product of shifted
copies of $E(R/\fq)$.
\end{lemma}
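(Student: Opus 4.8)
The statement to prove is Lemma~\ref{lem:smash}: for $\fp\subseteq\fq$ in $\Spec R$, the injective hull $E(R/\fp)$ is a direct summand of a product of shifted copies of $E(R/\fq)$. This is a purely commutative-algebra statement about injective modules over the graded-commutative noetherian ring $R$, so I would work entirely inside the category of graded $R$-modules and forget about $\sfT$.

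\medskip

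First I would reduce to the question of the support of $E(R/\fp)$ as computed against products of shifts of $E(R/\fq)$. The key point is that over a (graded-)noetherian ring, an injective module is determined up to isomorphism by its decomposition into indecomposable injectives $E(R/\fr)$ (with shifts), by Matlis theory in the graded setting; and a direct summand of a product of copies of $E(R/\fq)[n]$ is again injective, being a summand of an injective. So it suffices to show that $E(R/\fp)$ is a \emph{direct summand} of \emph{some} product $\prod_{i} E(R/\fq)[n_i]$ — equivalently, that there is a split monomorphism $E(R/\fp)\to \prod_i E(R/\fq)[n_i]$. Since $E(R/\fp)$ is injective, it is enough to produce a monomorphism $E(R/\fp)\hookrightarrow \prod_i E(R/\fq)[n_i]$ at all: any monomorphism out of an injective module splits.

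\medskip

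To produce such a monomorphism, I would argue as follows. Because $\fp\subseteq\fq$, the module $R/\fp$ has $\fq$ in its support, so $\Hom^{*}_{R}(R/\fp, E(R/\fq))\neq 0$; more precisely, a nonzero homogeneous element $x\in E(R/\fp)$ generates a submodule on which we can find, after localizing/completing appropriately, enough maps into $E(R/\fq)$ to separate points. Concretely: for each nonzero homogeneous $x\in E(R/\fp)$, the submodule $Rx$ is a nonzero finitely generated graded module whose support meets $\mcV(\fq)$ (since $\supp_R(Rx)\subseteq \supp_R(R/\fp)=\mcV(\fp)\ni\fq$), hence $\Hom^{*}_R(Rx,E(R/\fq))\neq 0$; extending along the inclusion $Rx\hookrightarrow E(R/\fp)$ (possible since $E(R/\fq)$ is injective) gives a homogeneous map $f_x\colon E(R/\fp)\to E(R/\fq)[n_x]$ with $f_x(x)\neq 0$. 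Taking the product of all the $f_x$ over the set of nonzero homogeneous elements $x$ gives a map $E(R/\fp)\to\prod_x E(R/\fq)[n_x]$ that is injective on homogeneous elements, hence injective. Since its source is injective, it splits, and $E(R/\fp)$ is a direct summand of that product, as required.

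\medskip

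The main obstacle I anticipate is bookkeeping with the \emph{grading}: one must make sure that the needed nonzero maps $Rx\to E(R/\fq)$ exist as \emph{homogeneous} maps of some internal degree $n_x$, and that an arbitrary product of shifts $\prod_x E(R/\fq)[n_x]$ really is again injective in $\GrMod R$ (true, since products of injectives are injective and shifts of injectives are injective). There is also the small point that the index set of all nonzero homogeneous elements of $E(R/\fp)$ is a proper set and not a proper class, which is automatic. None of this is deep; the essential content is just "injective $\Rightarrow$ any monomorphism out of it splits" together with the observation that $\fq\in\supp_R(R/\fp)$ guarantees nonzero homogeneous maps into $E(R/\fq)$ from every cyclic submodule of $E(R/\fp)$.
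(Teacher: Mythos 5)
Your proof is correct and is essentially the paper's argument: the paper simply invokes that the shifted copies of $E(R/\fq)$ form a set of injective cogenerators for the $\fq$-local graded modules and that a monomorphism out of an injective splits, which is exactly the content you verify by hand. The one step worth tightening is the claim that $\Hom^{*}_{R}(Rx,E(R/\fq))\neq 0$: what is actually needed is $(Rx)_{\fq}\neq 0$, which follows because $\ann(x)\subseteq\fp\subseteq\fq$ (the unique associated prime of the nonzero submodule $Rx\subseteq E(R/\fp)$ is $\fp$), rather than merely that $\Supp_{R}(Rx)$ ``meets $\mcV(\fq)$'' --- the big support of a finitely generated module is specialization closed, not generalization closed, so meeting $\mcV(\fq)$ would not by itself put $\fq$ in the support.
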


\begin{proof}
The shifted copies of $E(R/\fq)$ form a set of injective cogenerators
for the category of $\fq$-local modules.  This implies the desired
result.
\end{proof}

A subset $\mcU$ of $\Spec R$ is said to be \emph{closed under
generalization} if $\Spec R\setminus\mcU$ is specialization
closed. More explicitly: $\fq\in \mcU$ and $\fp\subseteq\fq$ imply
$\fp\in \mcU$.

\begin{theorem}
\label{thm:smash}
Let $\sfT$ be an $R$-linear triangulated category that is noetherian
and stratified by $R$.  There is then a bijection
\[
\left\{
\begin{gathered}
  \text{Localizing subcategories of $\sfT$}\\ \text{closed under all
  products}
\end{gathered}\;
\right\} \xymatrix@C=3pc{ \ar[r]^-{\supp_{R}}&{}} \left\{
\begin{gathered}
  \text{Subsets of $\supp_{R}\sfT$}\\ \text{closed under
  generalization}
\end{gathered}\;
\right\}
\]
Moreover, if $L\col\sfT\to\sfT$ is a localization functor that
preserves arbitrary coproducts, then the localizing subcategory $\Ker
L$ is generated by objects that are compact in $\sfT$.
\end{theorem}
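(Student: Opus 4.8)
The plan is to deduce the bijection by restricting the classification of \emph{all} localizing subcategories furnished by Theorem~\ref{thm:classifying}. Concretely, I claim that a localizing subcategory $\sfS$ of $\sfT$ is closed under products if and only if $\supp_R\sfS$ is closed under generalization as a subset of $\supp_R\sfT$. Once this is established, the maps $\supp_R$ and $\supp_R^{-1}$ of Theorem~\ref{thm:classifying} restrict to mutually inverse bijections between the two displayed classes, and the smashing statement then follows by applying the equivalence to $\sfS=\Ker L$, together with a support computation that invokes minimality.

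One direction of the claim is easy. If $\mcU=\supp_R\sfS$ is closed under generalization, put $\mcV=\Spec R\setminus\mcU$, which is specialization closed. Lemma~\ref{lem:gamp} gives $\supp_R(\gam_\mcV X)=\supp_R X\cap\mcV$ for every $X$, so $\supp_R X\subseteq\mcU$ exactly when $\gam_\mcV X=0$, that is, when $X\in\Im L_\mcV=\Ker\gam_\mcV$. Hence $\sfS=\supp_R^{-1}\mcU=\Im L_\mcV$; since the essential image of a localization functor is closed under products (see \cite[\S3]{\bik:2008a}), so is $\sfS$.

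For the converse, assume $\sfS$ is closed under products, fix $\fq\in\supp_R\sfS$ and a prime $\fp\subseteq\fq$ lying in $\supp_R\sfT$, and aim to show $\fp\in\supp_R\sfS$. Choose $Y\in\sfS$ with $\gam_\fq Y\neq0$; since $\gam_\fq Y$ is a nonzero object of $\gam_\fq\sfT$ it is supported exactly at $\fq$, so $\gam_\fq Y\in\supp_R^{-1}(\supp_R\sfS)=\sfS$ by Theorem~\ref{thm:classifying}, and then condition (S2) forces $\gam_\fq\sfT=\Loc_\sfT(\gam_\fq Y)\subseteq\sfS$. Because $\sfT$ is noetherian, $\supp_R C=\supp_R\End^*_\sfT(C)$ is a closed, hence specialization closed, subset of $\Spec R$ for every compact $C$; choose a compact $C$ with $\fp\in\supp_R C$, which exists as $\fp\in\supp_R\sfT$, so that $\fq\in\supp_R C$ as well. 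By Proposition~\ref{prop:iobject} the object $T_C(E(R/\fq))$ lies in $\gam_\fq\sfT\subseteq\sfS$ and is nonzero, since $\supp_R T_C(E(R/\fq))=\supp_R C\cap\supp_R E(R/\fq)$ contains $\fq$. Lemma~\ref{lem:smash} exhibits $E(R/\fp)$ as a direct summand of a product of shifted copies of $E(R/\fq)$; applying the product-preserving functor $T_C$ shows that $T_C(E(R/\fp))$ is a direct summand of a product of shifted copies of $T_C(E(R/\fq))$, hence lies in $\sfS$ since $\sfS$ is thick and closed under products. Finally $\fp\in\supp_R C\cap\supp_R E(R/\fp)=\supp_R T_C(E(R/\fp))$ by Proposition~\ref{prop:iobject}, so $\fp\in\supp_R\sfS$.

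For the smashing statement, let $L\col\sfT\to\sfT$ preserve coproducts and let $\gam$ be the functor fitting into the localization triangle $\gam X\to X\to LX\to$; then $\gam$ preserves coproducts too, so $\Im L=\Ker\gam$ is a localizing subcategory, and it is closed under products. For each prime $\fp$ the localization triangle shows $\supp_R\sfT=\supp_R(\Ker L)\cup\supp_R(\Im L)$, and if $\fp$ belonged to both sets then $\gam_\fp\sfT$ would meet each of $\Ker L$ and $\Im L$ nontrivially, whence by minimality $\gam_\fp\sfT\subseteq\Ker L\cap\Im L=0$, contradicting $\gam_\fp\sfT\neq0$; so the union is disjoint. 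By the equivalence just proved, $\supp_R(\Im L)$ is closed under generalization, hence $\supp_R(\Ker L)$ is specialization closed inside $\supp_R\sfT$; letting $\mcW$ be its closure in $\Spec R$, one has $\supp_R\sfT_\mcW=\mcW\cap\supp_R\sfT=\supp_R(\Ker L)$, so $\Ker L=\sfT_\mcW$ by injectivity of $\supp_R$, and $\sfT_\mcW$ is generated by objects compact in $\sfT$ by \cite[Theorem~6.4]{\bik:2008a}. I expect the main obstacle to be the converse half of the key equivalence: turning an abstract product-closure hypothesis on $\sfS$ into information about primes lying below those already present in $\supp_R\sfS$. The device for this --- transporting membership in $\sfS$ from $\gam_\fq\sfT$ down to $\gam_\fp\sfT$ through products of injective hulls and the functors $T_C$ --- is precisely where the noetherian hypothesis (to make each $\supp_R C$ specialization closed) and the minimality condition (S2) (to draw all of $\gam_\fq\sfT$ into $\sfS$) are genuinely used; the remainder is bookkeeping built on Proposition~\ref{prop:iobject}, Lemma~\ref{lem:smash}, and Theorem~\ref{thm:classifying}.
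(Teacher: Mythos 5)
Your proposal is correct and follows essentially the same route as the paper: the key forward implication (product-closed $\Rightarrow$ support closed under generalization) is exactly the paper's argument via $\gam_\fq\sfT\subseteq\sfS$, the functors $T_C$ applied to the injective hulls $E(R/\fq)$, Lemma~\ref{lem:smash}, and Proposition~\ref{prop:iobject}, while the converse identifies $\supp_R^{-1}\mcU$ with the $L_\mcV$-local objects just as in the paper. Your treatment of the smashing statement is a minor rearrangement (you compute $\supp_R(\Ker L)$ directly and invoke injectivity of $\supp_R$, where the paper identifies $L$ with $L_\mcV$ via its local objects), but both reduce to $\Ker L=\sfT_\mcW$ and the compact generation of $\sfT_\mcW$.
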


\begin{remark}
The inverse map of $\supp_{R}$ takes a generalization closed subset
$\mcU$ of $\Spec R$ to the category of objects $X$ of $\sfT$ with
$\supp_R X\subseteq\mcU$; in other words, the category of
$L_\mcV$-local objects, where $\mcV=\Spec R\setminus\mcU$.
\end{remark}

\begin{proof}
Let $\sfS$ be a localizing subcategory of $\sfT$ that is closed under
arbitrary products.  We know from Theorem~\ref{thm:classifying} that
$\sfS$ is determined by its support $\supp_R\sfS$. Thus we need to
show that it is closed under generalization.

Fix prime ideals $\fp\subseteq\fq$ in $\supp_{R}\sfT$ and suppose that
$\fq$ is in $\supp_R\sfS$.  It follows from
Theorem~\ref{thm:classifying} that $\gam_{\fq}\sfT\subseteq \sfS$
holds. Pick a compact object $C$ such that $\supp_{R}C$ contains
$\fp$; this is possible since $\supp_{R}\sfT^{c}=\supp_{R}\sfT$. Since
$\sfT$ is noetherian, $\supp_{R}C$ is a closed subset of $\Spec R$, by
\cite[Theorem 5.5]{\bik:2008a}, and hence contains also $\fq$. Let
$E(R/\fq)$ be the injective hull of the $R$-module $R/\fq$. Since
$\sfT$ is noetherian, Proposition~\ref{prop:iobject} yields that
$T_{C}(E(R/\fq))$ is in $\gam_{\fq}\sfT$ and hence in $\sfS$.  The
functor $T_{C}$ preserves products, so Lemma~\ref{lem:smash} implies
that $T_{C}(E(R/\fp))$ is a direct summand of $T_{C}(E(R/\fq))$ and
hence it is also in $\sfS$, because the latter is a localizing
subcategory closed under products. Another application of
Proposition~\ref{prop:iobject} shows that
$\supp_{R}T_{C}(E(R/\fp))=\{\fp\}$, so that $\fp\in\supp_{R}\sfS$
holds, as desired

Next let $\mcU$ be a generalization closed subset of $\Spec R$ and set $\mcV=\Spec R\setminus\mcU$. Let $\sfS$ be the category of $L_\mcV$-local objects, so that $\supp_R \sfS=\mcU$ holds, by \cite[Corollary~5.7]{\bik:2008a}. By construction, the category $\sfS$ is triangulated and closed under arbitrary products; it is localizing because the localization functor $L_\mcV$ preserves arbitrary coproducts, by \cite[Corollary~6.5]{\bik:2008a}.

This completes the proof that $\supp_{R}$ induces the stated bijection.

Finally, let $L\col\sfT\to\sfT$ be a localization functor that
preserves arbitrary coproducts. The category of $L$-local objects,
which always is closed under products, is then also a localizing
subcategory of $\sfT$.  The first part of this proof shows that
$L\cong L_\mcV$ for some specialization closed subset $\mcV$ of $\Spec
R$, because the localization functor $L$ is determined by the category
of $L$-local objects. It remains to note that $\Ker L$, which is the
category $\sfT_\mcV$, is generated by compact objects, by
\cite[Theorem~6.4]{\bik:2008a}.
\end{proof}

\section{Tensor triangulated categories}
\label{sec:Tensor triangulated categories}
In this section we discuss special properties of triangulated
categories which hold when they have a tensor structure. The main
result here is Theorem~\ref{thm:ttlg}, which says that the
local-global principle holds for such categories, when the
action of the tensor product is also taken into account.

Let $\sfT=(\sfT,\otimes,\one)$ be a tensor triangulated category as
defined in \cite[\S8]{\bik:2008a}. In particular, $\sfT$ is a
compactly generated triangulated category endowed with a symmetric
monoidal structure; $\otimes$ is its tensor product and $\one$ the
unit of the tensor product. It is assumed that $\otimes$ is exact in
each variable, preserves coproducts, and that $\one$ is compact.

The symmetric monoidal structure ensures that the endomorphism ring
$\End^{*}_{\sfT}(\one)$ is graded commutative. This ring acts on
$\sfT$ via homomorphisms
\[
\End^{*}_{\sfT}(\one)\xra{\ X\otimes-\ }\End^{*}_{\sfT}(X)\,,
\]
In particular, any homomorphism $R\to \End^{*}_{\sfT}(\one)$ of rings
with $R$ graded commutative induces an action of $R$ on $\sfT$. We say
that an $R$ action on $\sfT$ is \emph{canonical} if it arises from
such a homomorphism. In that case there are for each specialization
closed subset $\mcV$ and point $\fp$ of $\Spec R$ natural isomorphisms
\begin{equation}
\label{eq:loc-tensor}
\gam_{\mcV}X\cong X\otimes \gam_{\mcV}\one\,,\quad \bloc_{\mcV}X\cong
X\otimes \bloc_{\mcV}\one\,, \quad\text{and}\quad \gam_{\fp}X\cong
X\otimes \gam_{\fp}\one\,.
\end{equation}
These isomorphisms are from \cite[Theorem~8.2,
Corollary~8.3]{\bik:2008a}.\footnote{For these results to hold, the
$R$ action should be canonical, for the $R$-linearity of the
adjunction isomorphism $\Hom_{\sfT}(X\otimes Y,Z)\cong
\Hom_{\sfT}(X,\fHom(Y,Z))$ is used in the arguments.}

\subsection*{Tensor ideal localizing subcategories}

A localizing subcategory $\sfS$ of $\sfT$ said to be \emph{tensor
ideal} if for each $X\in \sfT$ and $Y\in \sfS$, the object $X\otimes
Y$, hence also $Y\otimes X$, is in $\sfS$. The smallest tensor ideal
localizing subcategory containing a subcategory $\sfS$ is denoted
$\Loc^{\otimes}_{\sfT}(\sfS)$. Evidently there is always an inclusion
$\Loc_{\sfT}(\sfS)\subseteq \Loc^{\otimes}_{\sfT}(\sfS)$; equality
holds when the unit $\one$ generates $\sfT$.

The following result is proved in \cite[Theorem 3.6]{\bik:2008b} under
the additional assumption that $\sfT$ has a single compact
generator. The same argument carries over; except that, instead of
\cite[Proposition 3.5]{\bik:2008b} use Proposition~\ref{prop:comp}
above. We omit details.

\begin{theorem}
\label{thm:ttlg}
Let $\sfT$ be a tensor triangulated category with a canonical
$R$-action. For each object $X$ in $\sfT$ there is an equality
\[
\Loc_{\sfT}^{\otimes}(X) = \Loc_{\sfT}^{\otimes}\big(\gam_\fp
X\mid\fp\in\Spec R\big)\,.
\]
In particular, when $\one$ generates $\sfT$, the local global
principle holds for $\sfT$.\qed
\end{theorem}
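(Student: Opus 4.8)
The plan is to mimic the proof of \cite[Theorem~3.6]{\bik:2008b}, with Proposition~\ref{prop:comp} in place of \cite[Proposition~3.5]{\bik:2008b}. Two formal consequences of the canonical action and the isomorphisms \eqref{eq:loc-tensor} will be used freely: each $\sfT_\mcV$ is a \emph{tensor ideal} localizing subcategory, since $L_\mcV X=0$ forces $L_\mcV(X\otimes Y)\cong Y\otimes L_\mcV X=0$ for all $Y$; and for specialization closed $\mcV$ one has $\supp_R(\gam_\mcV Z)=\mcV\cap\supp_R Z$ and $\supp_R(L_\mcV Z)=\supp_R Z\setminus\mcV$, both immediate from Lemma~\ref{lem:gamp}, so that $\sfT_\mcV=\{Z\mid\supp_R Z\subseteq\mcV\}$ (support detecting the zero object). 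The first step is to upgrade Proposition~\ref{prop:comp} to the assertion that for every specialization closed $\mcV$,
\[
\sfT_\mcV=\Loc^{\otimes}_{\sfT}\big(\gam_{\mcV(\fp)}\one\mid\fp\in\mcV\big)\,.
\]
Here "$\supseteq$" holds because each $\gam_{\mcV(\fp)}\one$ with $\fp\in\mcV$ lies in the tensor ideal $\sfT_\mcV$, while "$\subseteq$" follows by fixing a set $\sfG$ of compact generators, applying Proposition~\ref{prop:comp} to the cover $\mcV=\bigcup_{\fp\in\mcV}\mcV(\fp)$ to get $\sfT_\mcV=\Loc_\sfT(\gam_{\mcV(\fp)}C\mid C\in\sfG,\ \fp\in\mcV)$, and noting $\gam_{\mcV(\fp)}C\cong C\otimes\gam_{\mcV(\fp)}\one$. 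Taking $\mcV=\Spec R$, where $\sfT_\mcV=\sfT$, gives $\sfT=\Loc^{\otimes}_{\sfT}(\gam_{\mcV(\fp)}\one\mid\fp\in\Spec R)$.

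The heart of the argument is to show $\one\in\sfS_1:=\Loc^{\otimes}_{\sfT}(\gam_\fq\one\mid\fq\in\Spec R)$; by the previous step it is enough to prove $\gam_{\mcV(\fp)}\one\in\sfS_1$ for every $\fp$, and I would do this by noetherian induction: if the set of primes where this fails were nonempty it would have a maximal element $\fp$ (as $R$ is noetherian), and then $\gam_{\mcV(\fq)}\one\in\sfS_1$ for all $\fq\supsetneq\fp$. Applying the localization triangle \eqref{eq:locseq} to $\gam_{\mcV(\fp)}\one$ relative to the specialization closed set $\mcZ(\fp)$ gives
\[
\gam_{\mcZ(\fp)}\big(\gam_{\mcV(\fp)}\one\big)\lto\gam_{\mcV(\fp)}\one\lto L_{\mcZ(\fp)}\big(\gam_{\mcV(\fp)}\one\big)\lto\,.
\]
The left-hand term is supported in $\mcZ(\fp)\cap\mcV(\fp)=\mcV(\fp)\setminus\{\fp\}=:\mcW$, so it lies in $\sfT_\mcW=\Loc^{\otimes}_{\sfT}(\gam_{\mcV(\fq)}\one\mid\fq\in\mcW)$, which is contained in $\sfS_1$ by the inductive hypothesis since every $\fq\in\mcW$ satisfies $\fq\supsetneq\fp$; the right-hand term is supported in $\mcV(\fp)\setminus\mcZ(\fp)=\{\fp\}$, hence is $\fp$-local and $\fp$-torsion, and is identified with $\gam_\fp\one\in\sfS_1$. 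So the middle term lies in $\sfS_1$, contradicting the choice of $\fp$; the induction closes and $\one\in\sfT\subseteq\sfS_1$.

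To finish, fix an arbitrary $X$. From $\gam_\fp X\cong X\otimes\gam_\fp\one$ and the tensor-ideality of $\Loc^{\otimes}_{\sfT}(X)$ one gets $\Loc^{\otimes}_{\sfT}(\gam_\fp X\mid\fp)\subseteq\Loc^{\otimes}_{\sfT}(X)$; for the reverse I would set $\sfW=\{Z\in\sfT\mid X\otimes Z\in\Loc^{\otimes}_{\sfT}(\gam_\fp X\mid\fp)\}$, note that exactness and coproduct-preservation of $\otimes$ make $\sfW$ a localizing subcategory, that it is tensor ideal because its target is, and that it contains every $\gam_\fp\one$, so $\sfW\supseteq\sfS_1\ni\one$ and hence $X=X\otimes\one\in\Loc^{\otimes}_{\sfT}(\gam_\fp X\mid\fp)$. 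The final assertion follows since, when $\one$ generates $\sfT$, the localizing subcategory $\{Z\mid Y\otimes Z\in\sfS\}$ contains $\one$ for any localizing $\sfS$ and $Y\in\sfS$, hence equals $\sfT$, so every localizing subcategory is tensor ideal and $\Loc^{\otimes}_{\sfT}=\Loc_\sfT$. I expect the only point needing care to be the identification of the right-hand term of the displayed triangle with $\gam_\fp\one$, which rests on the compatibility of $\gam_{\mcV(\fp)}$ with $L_{\mcZ(\fp)}$ — equivalently on the identity $\gam_\fp=\gam_{\mcV(\fp)}L_{\mcZ(\fp)}$ already invoked in the proof of Theorem~\ref{thm:localglobal} — established in \cite{\bik:2008a}.
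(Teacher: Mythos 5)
Your argument is correct and is essentially the reconstruction the paper intends: it reduces to the unit via the isomorphisms \eqref{eq:loc-tensor}, upgrades Proposition~\ref{prop:comp} to a tensor-ideal generation statement for $\sfT_\mcV$, and runs the noetherian induction with the localization triangle for $\mcZ(\fp)$, exactly as in \cite[Theorem~3.6]{Benson/Iyengar/Krause:2008b} whose proof the paper omits. The one step you flag -- identifying $L_{\mcZ(\fp)}\gam_{\mcV(\fp)}\one$ with $\gam_\fp\one$ -- is indeed covered by the commutation of these functors in \cite{Benson/Iyengar/Krause:2008a}, and could even be bypassed since any object of $\gam_\fp\sfT$ is of the form $Y\otimes\gam_\fp\one$ by \eqref{eq:loc-tensor} and hence lies in $\Loc^{\otimes}_{\sfT}(\gam_\fp\one)$.
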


\subsection*{Stratification}
For each $\fp$ in $\Spec R$, the localizing subcategory
$\gam_{\fp}\sfT$, consisting of $\fp$-local and $\fp$-torsion objects,
is tensor ideal; this is immediate from \eqref{eq:loc-tensor}.  We say
that $\sfT$ is \emph{stratified} by $R$ when for each $\fp$, the
category $\gam_{\fp}\sfT$ is either zero or has no proper \emph{tensor
ideal} localizing subcategories. Note the analogy with condition (S2)
in Section~\ref{sec:Stratification}; the analogue of (S1) need not be
imposed thanks to Theorem~\ref{thm:ttlg}.

There are analogues, for tensor triangulated categories, of results in
Sections~\ref{sec:Orthogonality} and \ref{sec:Classifying thick
subcategories}; the proofs are similar, see also
\cite[\S11]{\bik:2008a}.  One has in addition also the following
`tensor product theorem'.

\begin{theorem}
\label{thm:ttintersection}
Let $\sfT$ be a tensor triangulated category with a canonical
$R$-action. If $R$ stratifies $\sfT$, then for any objects $X,Y$ in
$\sfT$ there is an equality
\[
\supp_{R}(X\otimes Y) = \supp_{R} X \cap \supp_{R}Y\,.
\] 
\end{theorem}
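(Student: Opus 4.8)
The plan is to reduce everything to the isomorphism
\[
\gam_{\fp}(X\otimes Y)\cong\gam_{\fp}X\otimes\gam_{\fp}Y
\]
valid for every $\fp\in\Spec R$, and then to argue one prime at a time. First I would record that $\gam_{\fp}\one$ is idempotent for $\otimes$: since $\gam_{\fp}\gam_{\fp}\cong\gam_{\fp}$ (see \cite{\bik:2008a}), applying \eqref{eq:loc-tensor} to the object $\gam_{\fp}\one$ gives $\gam_{\fp}\one\otimes\gam_{\fp}\one\cong\gam_{\fp}(\gam_{\fp}\one)\cong\gam_{\fp}\one$. Feeding this back into \eqref{eq:loc-tensor} and using the symmetry of $\otimes$ yields $\gam_{\fp}(X\otimes Y)\cong X\otimes Y\otimes\gam_{\fp}\one\otimes\gam_{\fp}\one\cong(X\otimes\gam_{\fp}\one)\otimes(Y\otimes\gam_{\fp}\one)\cong\gam_{\fp}X\otimes\gam_{\fp}Y$. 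The inclusion $\supp_{R}(X\otimes Y)\subseteq\supp_{R}X\cap\supp_{R}Y$ is then immediate and uses no stratification: if $\gam_{\fp}X=0$ or $\gam_{\fp}Y=0$, then the right-hand side of the displayed isomorphism vanishes, so $\fp\notin\supp_{R}(X\otimes Y)$.

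For the reverse inclusion, fix $\fp\in\supp_{R}X\cap\supp_{R}Y$. Then $\gam_{\fp}X$ and $\gam_{\fp}Y$ are nonzero objects of $\gam_{\fp}\sfT$, so this localizing subcategory is nonzero, and the stratification hypothesis says it has no proper tensor ideal localizing subcategories. I would then consider the full subcategory
\[
\sfS=\{Z\in\gam_{\fp}\sfT\mid Z\otimes\gam_{\fp}X=0\}\,.
\]
Because $\otimes$ is exact and preserves coproducts, $\sfS$ is localizing; because $\gam_{\fp}\sfT$ is a tensor ideal in $\sfT$, for $Z\in\sfS$ and any $W\in\sfT$ the object $W\otimes Z$ lies in $\gam_{\fp}\sfT$ and satisfies $(W\otimes Z)\otimes\gam_{\fp}X\cong W\otimes(Z\otimes\gam_{\fp}X)=0$, so $\sfS$ is tensor ideal. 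It is proper, since $\gam_{\fp}\one\in\gam_{\fp}\sfT$ but $\gam_{\fp}\one\otimes\gam_{\fp}X\cong\gam_{\fp}X\ne0$ by \eqref{eq:loc-tensor} and idempotency, whence $\gam_{\fp}\one\notin\sfS$. Therefore $\sfS=0$, and since $\gam_{\fp}Y\ne0$ this forces $\gam_{\fp}Y\otimes\gam_{\fp}X\ne0$, i.e. $\gam_{\fp}(X\otimes Y)\ne0$, so $\fp\in\supp_{R}(X\otimes Y)$.

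This argument is the tensor analogue of the proof of Theorem~\ref{thm:intersection}, and the stratification hypothesis enters only through the reverse inclusion; the rest is formal manipulation with \eqref{eq:loc-tensor}. The step needing the most care is confirming that $\sfS$ really is a \emph{tensor ideal localizing subcategory of $\gam_{\fp}\sfT$} in the precise sense demanded by the definition of stratification — in particular that it stays inside $\gam_{\fp}\sfT$ (which is where tensor-idealness of $\gam_{\fp}\sfT$ in $\sfT$ is used) and that $\gam_{\fp}\one$, acting as the unit of $\gam_{\fp}\sfT$, detects its properness. Once that is in place, minimality does the rest.
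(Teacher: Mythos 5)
Your proof is correct, and its overall shape---reduce to $\gam_{\fp}(X\otimes Y)\cong\gam_{\fp}X\otimes\gam_{\fp}Y$ via \eqref{eq:loc-tensor} and argue one prime at a time---is exactly the paper's; the forward inclusion is identical. Where you differ is in how minimality is invoked for the reverse inclusion. The paper argues by \emph{generation}: $\Loc^{\otimes}_{\sfT}(\gam_{\fp}X)$ is a nonzero tensor ideal localizing subcategory contained in $\gam_{\fp}\sfT$, so minimality gives $\gam_{\fp}\one\in\Loc^{\otimes}_{\sfT}(\gam_{\fp}X)$; tensoring this membership with $Y$ yields $\gam_{\fp}Y\in\Loc^{\otimes}_{\sfT}(\gam_{\fp}X\otimes Y)$, whence $\gam_{\fp}Y\ne 0$ forces $\gam_{\fp}(X\otimes Y)\ne 0$. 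You argue by \emph{annihilation}: the tensor-annihilator $\sfS$ of $\gam_{\fp}X$ inside $\gam_{\fp}\sfT$ is a tensor ideal localizing subcategory not containing $\gam_{\fp}\one$, hence zero, so $\gam_{\fp}Y\notin\sfS$. The two are logically equivalent faces of minimality (yours is the tensor analogue of Lemma~\ref{lem:loc-minimal}); the paper's is marginally shorter since it need not verify that $\sfS$ is tensor ideal and stays inside $\gam_{\fp}\sfT$, while yours makes explicit the roles of the unit and of the tensor-idealness of $\gam_{\fp}\sfT$. One reading point both arguments rely on: ``has no proper tensor ideal localizing subcategories'' must be understood as ``no proper \emph{nonzero} ones,'' consistent with the paper's earlier definition of minimality, so that properness of your $\sfS$ indeed forces $\sfS=0$.
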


\begin{proof}
Fix a point $\fp$ in $\Spec R$. From \ref{eq:loc-tensor} it is easy to
verify that there are isomorphisms $\gam_{\fp}(X\otimes Y)\cong
\gam_{\fp}X\otimes\gam_{\fp}Y\cong \gam_{\fp}X\otimes Y$. These will
be used without further ado. They yield an inclusion:
\[
\supp_{R}(X\otimes Y) \subseteq \supp_{R} X \cap\supp_{R}Y\,.
\]
When $\gam_{\fp}X\ne 0$ the stratification condition yields
$\gam_{\fp}\one\in\Loc^{\otimes}(\gam_{\fp}X)$, and hence also
$\gam_{\fp}Y\in\Loc^{\otimes}(\gam_{\fp}X\otimes Y)$. Thus when
$\gam_{\fp}Y\ne 0$ also holds, $\gam_{\fp}(X\otimes Y)\ne 0$ holds,
which justifies the reverse inclusion.
\end{proof}

\begin{example}
\label{eg:kg}
Let $G$ be a finite group, $k$ a field of characteristic $p$, where
$p$ divides the order of $G$, and $kG$ the group algebra. The homotopy
category of complexes of injective $kG$-modules, $\KInj {kG}$, is a
compactly generated tensor triangulated category with a canonical
action of the cohomology ring $H^{*}(G,k)$.  One of the main results
of \cite{\bik:2008b}, Theorem 9.7, is that $\KInj{kG}$ is stratified
by this action. The same is true also of the stable module category $\StMod kG$; see
\cite[Theorem 10.3]{\bik:2008b}.
\end{example}

\section{Formal differential graded algebras}
\label{sec:Formal differential graded algebras}

The goal of this section is to prove that the derived category of
differential graded (henceforth abbreviated to `dg') modules over a
formal commutative dg algebra is stratified by its cohomology algebra,
when that algebra is noetherian. This result specializes to one of
Neeman's~\cite{Neeman:1992a} concerning rings, which may be viewed as
dg algebras concentrated in degree $0$.

For basic notions concerning dg algebras and dg modules over them we
refer the reader to Mac Lane~\cite[\S6.7]{Maclane:1963a}. A
\emph{quasi-isomorphism} between dg algebras $A$ and $B$ is a morphism
$\vf\col A\to B$ of dg algebras such that $\hh\vf$ is bijective; $A$
and $B$ are \emph{quasi-isomorphic} if there is a chain of
quasi-isomorphisms linking them. The multiplication on $A$ induces one
on its cohomology, $\hh A$. We say that $A$ is \emph{formal} if it is
quasi-isomorphic to $\hh A$, viewed as a dg algebra with zero differential.

We write $\sfD(A)$ for the derived category of dg modules over a dg
algebra $A$; it is a triangulated category, generated by the compact
object $A$; see, for instance, \cite{Keller:1994a}.
 
A dg algebra $A$ is said to be \emph{commutative} if its underlying
ring is graded commutative. In this case the derived tensor product of
dg modules, denoted $\lotimes$, endows $\sfD(A)$ with a structure of a
tensor triangulated category, with unit $A$. One is thus in the
framework of Section~\ref{sec:Tensor triangulated categories}.

The next theorem generalizes \cite[Theorem 5.2]{\bik:2008b}, which deals with the case of graded algebras of the form $k[x_{1},\dots,x_{n}]$, where $k$ is a field and $x_{1},\dots,x_{n}$ are indeterminates, of even degree if the characteristic of $k$ is not $2$.

\begin{theorem}
\label{thm:cdga}
Let $A$ be a commutative dg algebra such that the ring $\hh A$ is
noetherian. If $A$ is formal, then $\sfD(A)$ is stratified by the
canonical $\hh A$-action.
\end{theorem}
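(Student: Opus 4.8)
The plan is to reduce to the case of a ring, and then to prove that the derived category of a graded‑commutative noetherian ring is stratified by its canonical action; the latter is a graded, derived‑categorical refinement of Neeman's classification \cite{Neeman:1992a}, extending \cite[Theorem~5.2]{\bik:2008b}. \emph{Reduction.} Since $A$ is formal, it is linked by a chain of quasi‑isomorphisms of commutative dg algebras to $H:=\hh A$ with zero differential. Each such quasi‑isomorphism induces a symmetric monoidal equivalence of derived categories carrying the canonical $\hh A$‑action on $\sfD(A)$ to the canonical $H$‑action on $\sfD(H)$; in particular it is compatible with the functors $\gam_{\fp}$ and with supports. So it suffices to show that $\sfD(H)$ is stratified by $H$, where $H$ is graded‑commutative noetherian. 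Now $\sfD(H)$ is a tensor triangulated category with unit $H$, a compact generator, and the canonical $H$‑action is induced by the identity $H=\End^{*}_{\sfD(H)}(H)$; hence condition (S1) holds by Theorem~\ref{thm:ttlg} (and, since $H$ generates, tensor‑ideal and ordinary localizing subcategories coincide, so (S2) has its usual meaning).

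\emph{Condition (S2).} Fix $\fp\in\Spec H$. Homogeneous localization $H\to H_{\fp}$ gives a fully faithful exact coproduct‑preserving functor $\sfD(H_{\fp})\to\sfD(H)$ whose essential image is the class of $\fp$‑local objects, and it identifies $\gam_{\fp}\sfD(H)$ with $\gam_{\fm}\sfD(H_{\fp})$, where $\fm=\fp H_{\fp}$. So we may assume $H$ is graded‑local with maximal ideal $\fm$ and graded residue field $k=H/\fm$; since $k$ lies in $\gam_{\fm}\sfD(H)$ this category is nonzero, and it remains to prove it is minimal. First I would check that $\gam_{\fm}\sfD(H)=\Loc_{\sfD(H)}(k)$: the inclusion $\supseteq$ is clear since $\gam_{\fm}\sfD(H)$ is localizing; for $\subseteq$, any $X$ in $\gam_{\fm}\sfD(H)$ has homology supported in $\{\fm\}$, hence each homology module is a union of finite length submodules, finite length modules lie in $\Loc_{\sfD(H)}(k)$, and therefore so do all homology modules of $X$ and then $X$ itself, built from them by truncation triangles and a homotopy colimit.

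\emph{Minimality.} Let $X\neq 0$ in $\gam_{\fm}\sfD(H)=\Loc(k)$; it suffices to show $k\in\Loc_{\sfD(H)}(X)$, for then $\Loc(X)=\Loc(k)=\gam_{\fm}\sfD(H)$. Let $K=\kos H{\fm}$ be the Koszul complex on a finite generating set of $\fm$. Then $K$ is compact in $\sfD(H)$ and lies in $\Thick_{\sfD(H)}(k)$, because its homology is annihilated by $\fm$, hence is a finite sum of shifts of $k$. By Lemma~\ref{lem:kos-props}, $\supp_{H}\kos X{\fm}=\mcV(\fm)\cap\supp_{H}X=\{\fm\}$ since $X\neq 0$, so $\kos X{\fm}\cong K\lotimes_{H}X$ is nonzero; as $K\in\Thick(k)$, applying $-\lotimes_{H}X$ shows $\kos X{\fm}\in\Thick(k\lotimes_{H}X)$, whence $k\lotimes_{H}X\neq 0$. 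On the other hand $k\in\Loc_{\sfD(H)}(H)$, so applying the exact coproduct‑preserving functor $-\lotimes_{H}X$ again gives $k\lotimes_{H}X\in\Loc_{\sfD(H)}(X)$. Finally the homology of $k\lotimes_{H}X$, being $\Tor^{H}_{*}(k,X)$, is annihilated by $\fm$, hence is a graded $k$‑vector space; a devissage through this homology, combined with the fact that $\sfD$ of the graded field $k$ has no proper nonzero localizing subcategory, yields $k\in\Loc_{\sfD(H)}(k\lotimes_{H}X)\subseteq\Loc_{\sfD(H)}(X)$. This completes the verification that $\sfD(H)$, and hence $\sfD(A)$, is stratified.

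\emph{The main obstacle} is precisely this last devissage — equivalently, the minimality of $\gam_{\fm}\sfD(H)$ for a graded‑local noetherian ring $H$, which is the genuine technical content, the preceding steps being formal bookkeeping and citations. The difficulty is that a complex with $k$‑linear homology need not be formal over $H$, so one cannot simply read off that $k\lotimes_{H}X$ is a coproduct of shifts of $k$. The way I would handle it is to write $H$, after localizing and completing, as a quotient of a regular (for instance, graded polynomial) ring $P$, and to transfer the minimality of $\gam_{\fm}\sfD(P)$ — which is \cite[Theorem~5.2]{\bik:2008b} — along the change of rings $P\to H$, using the Koszul complex $K$ and its compactness to control the relevant $\Hom$‑groups between objects of $\gam_{\fm}\sfD(H)$, as in Lemma~\ref{lem:loc-minimal}.
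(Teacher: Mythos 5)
Your reductions (formality plus Corollary~\ref{cor:changeofdgas}, condition (S1) via Theorem~\ref{thm:ttlg}, localization at $\fp$, the graded-field argument for $k\lotimes_{H}X$, and the tensor manipulations showing $k\lotimes_{H}X\ne 0$ and $k\lotimes_{H}X\in\Loc_{\sfD(H)}(X)$) all match the paper, and you correctly identify where the real work lies. But at that crucial point there is a genuine gap, and neither of the two arguments you offer closes it. The step you need is that $\kos H{\fm}$ --- more generally, a dg $H$-module whose homology is a finite extension of shifts of $k$ --- lies in $\Thick_{\sfD(H)}(k)$. Your first justification, ``built from the homology by truncation triangles,'' fails because for a dg module over the graded ring $H$ (zero differential) the internal and homological gradings coincide: the smart truncation $\tau^{\les n}X$ is not an $H$-submodule (an element of $H$ of degree $d\ges 2$ maps $X^{n-1}$ into degrees killed by the truncation), so the truncation triangles you invoke do not exist in $\sfD(H)$; and a dg module with $k$-linear homology need not be quasi-isomorphic to one on which $\fm$ acts trivially --- this is exactly the non-formality issue you yourself flag. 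This is precisely why the paper introduces the totalization functor $\tot\col\sfD(\GrMod H)\to\sfD(H)$: in complexes of graded modules there are two gradings, the devissage of a bounded complex with finitely generated homology through its homology is available by \cite[Theorem~6.2(3)]{Avramov/Buchweitz/Iyengar/Miller:2009a}, and the Koszul complex on the \emph{squares} of the generators (squared so that its homology is filtered with $k$-vector-space subquotients) is then transported into $\sfD(H)$ by $\tot$. Note also that the homology of $\kos H{\fm}$ is annihilated only by a power of $\fm$, not by $\fm$ itself, so even the module-theoretic part of your claim needs adjusting.

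Your fallback in the last paragraph --- present $H$ as a quotient of a graded polynomial ring over a field and transfer minimality from \cite[Theorem~5.2]{\bik:2008b} --- does not rescue the argument in the required generality: a graded-local noetherian graded-commutative ring need not be a quotient of a polynomial ring over a field (take $H=\bbZ$ and $\fm=(p)$; the theorem must cover this, since it specializes to Neeman's), passing to the completion changes the derived category and is not accounted for, and the transfer of minimality along restriction of scalars is only asserted. The same gap affects your earlier claim that $\gam_{\fm}\sfD(H)\subseteq\Loc_{\sfD(H)}(k)$, whose proof by devissage of arbitrary, possibly unbounded, objects through their homology is likewise unavailable; once $\kos H{\fm}\in\Loc_{\sfD(H)}(k)$ is actually established, that inclusion follows instead from $\gam_{\fm}\sfD(H)=\Loc_{\sfD(H)}(\kos H{\fm})$ via Propositions~\ref{prop:comp} and \ref{prop:kosproperties}.
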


In the proof we use a totalization functor from complexes over a
graded ring to dg modules over the ring viewed as a dg algebra with
differential zero; see \cite[\S10.9]{Maclane:1963a}, where
this functor is called condensation, and \cite[\S3.3]{Keller:1994a}.

\subsection*{Totalization}
Let $A$ be a graded algebra. For each graded $A$-module $N$ and
integer $d$ we write $N[d]$ for the graded $A$-module with
$N[d]^{i}=N^{d+i}$, and multiplication the same as the one on $N$.

Let $F$ be a complex of graded $A$-modules with differential $\delta$;
so each $F^{i}$ is a graded $A$-module, $\delta^{i}\col F^{i}\to
F^{i+1}$ are morphisms of graded $A$-modules, and
$\delta^{i+1}\delta^{i}=0$.  We write $F^{i,j}$ for the component of
degree $j$ in the graded module $F^{i}$. The \emph{totalization} of
$F$, denoted $\tot F$, is the dg abelian group with
\begin{align*}
(\tot F)^{n} &= \bigoplus_{i+j=n} F^{i,j} \quad \text{for each $n\in\bbZ$} \\ 
\dd(f) &= \delta^{i}(f) \quad \text{for each $f\in F^{i,j}$} 
\intertext{We consider $\tot F$ as a graded $A$-module with multiplication defined by} a\cdot f &= (-1)^{di}af \quad \text{for each $a\in A^{d}$ and $f\in F^{i,j}$.}
\end{align*}
A routine calculation shows that $\tot F$ is then a dg $A$-module, where $A$ is viewed as dg algebra with zero differential, and that each morphism $\alpha\col F\to G$ of complexes of graded $A$-modules induces a morphism $\tot\alpha\col \tot F\to \tot G$ of dg
$A$-modules. Moreover, there are equalities of dg $A$-modules:
\begin{itemize}
\item $\tot A = A$;
\item $\tot N[d] = \Si^{d} \tot N$ for each graded $A$-module $N$ and
integer $d$;
\item $\tot \Si^{n}F = \Si^{n}\tot F$.
\end{itemize}
One thus gets an additive functor from the category of complexes of graded $A$-modules to the category of dg $A$-modules. It is easy to check that if the complex $F$ is acyclic so is $\tot F$. 

Indeed, fix a cycle $z$ in $(\tot F)^{n}$, and write $z=\sum_{i}z_{i}$ where $z_{i}\in F^{i,n-i}$. Since $\delta(z) = \sum_{i}\delta^{i}(z_{i})$ and $\delta^{i}(z_{i})\in F^{i+1,n-i}$, each $z_{i}$ is a cycle in $F^{i}$. Since $F$ is acyclic there exist elements $w_{i}\in F^{i-1,j}$ with $\delta^{i-1}(w_{i})=z_{i}$; moreover, one may take $w_{i}=0$ when $z_{i}=0$. Note that the element $w=\sum_{i}w_{i}$ is in $(\tot F)^{n-1}$ and $\delta(w)=z$. 

In conclusion, $\tot$ induces an exact functor
\[
\tot\col \sfD(\GrMod A)\lto \sfD(A)\,.
\]
of triangulated categories; here $\sfD(\GrMod A)$ is the derived category of graded $A$-modules, while $\sfD(A)$ is the derived category of dg $A$-modules.

\begin{lemma}
\label{lem:tot}
Let $E$ be the Koszul complex on a sequence $\bsa=a_{1},\dots,a_{c}$
of homogenous central elements in $A$. Then $\tot E \cong \Si^{d}\kos
A{\bsa}$ in $\sfD(A)$, where $d=\sum_{n} |a_{n}|$.
\end{lemma}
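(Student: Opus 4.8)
The plan is to argue by induction on the length $c$ of the sequence $\bsa=a_{1},\dots,a_{c}$, using two properties of the totalization functor: it is an exact functor of triangulated categories (as established just above), and it carries the internal degree shift $F\mapsto F[m]$ on complexes of graded $A$-modules to the suspension $\Si^{m}$ on $\sfD(A)$ --- this follows by applying the identity $\tot N[m]=\Si^{m}\tot N$ in each homological degree. When $c=0$ the sequence is empty, $E=A$ concentrated in degree zero, $d=0$, and $\kos A{\bsa}=A$, so the claim reduces to $\tot A=A$.

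For the inductive step write $\bsa'=a_{1},\dots,a_{c-1}$, let $E'$ denote the Koszul complex on $\bsa'$, and put $e=|a_{c}|$ and $d'=\sum_{n<c}|a_{n}|$, so that $d=d'+e$. The Koszul complex is built one element at a time: in the category of complexes of graded $A$-modules, $E$ is the mapping cone of the morphism $\cdot a_{c}\colon E'[p]\to E'[p+e]$ for a suitable integer $p$ dictated by the normalization. Applying the exact functor $\tot$, together with the shift-to-suspension property, turns this into
\[
\tot E\;\cong\;\Cone\bigl(\Si^{p}\tot E'\xrightarrow{\,\cdot a_{c}\,}\Si^{p+e}\tot E'\bigr)\,.
\]
By the induction hypothesis $\tot E'\cong\Si^{d'}\kos A{\bsa'}$, and since multiplication by $a_{c}$ commutes with suspensions, pulling $\Si^{p+d'}$ out of the cone leaves $\Cone\bigl(\kos A{\bsa'}\xrightarrow{\,\cdot a_{c}\,}\Si^{e}\kos A{\bsa'}\bigr)$, which by definition is $\kos{(\kos A{\bsa'})}{a_{c}}=\kos A{\bsa}$. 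Thus $\tot E\cong\Si^{p+d'}\kos A{\bsa}$, and it remains only to verify that $p+d'=d$.

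That final verification --- determining $p$, i.e. keeping track of which internal degree shifts are built into the Koszul complex $E$, how $\tot$ turns each into a suspension, and how these accumulate alongside the suspensions $\Si^{|a_{n}|}$ appearing in the iterated construction of $\kos A{\bsa}$ --- is the one genuinely delicate step, and the one I would check most carefully; it is exactly what forces the exponent of the global suspension to equal $d=\sum_{n}|a_{n}|$ and not some other integer combination of the $|a_{n}|$. The remaining ingredients --- exactness of $\tot$, the one-element-at-a-time construction of the Koszul complex, and commuting suspensions past multiplication by $a_{c}$ --- are formal.
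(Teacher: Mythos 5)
Your argument---reducing to one element at a time via the mapping-cone construction of the Koszul complex, exactness of $\tot$, and the shift-to-suspension property $\tot N[m]=\Si^{m}\tot N$---is exactly the paper's proof, which likewise reduces to a single element through iterated mapping cones and then declares the result immediate from the listed properties of $\tot$. The degree bookkeeping you flag as the delicate step is indeed the only real content, and it closes as you expect: the internal shift by $|a_{n}|$ built into each stage of $E$ is converted by $\tot$ into precisely the suspension $\Si^{|a_{n}|}$ occurring in the defining triangle of $\kos{(-)}{a_{n}}$, so the exponents accumulate to $d=\sum_{n}|a_{n}|$.
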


\begin{proof}
Indeed, since $\tot$ preserves exact triangles, and both $E$ and $\kos
A{\bsa}$ can be obtained as iterated mapping cones, it suffices to
verify the statement for the Koszul complex on a single element, say
$a$.  The desired result is then immediate from the properties of
$\tot$ listed above.
\end{proof}

We require also some elementary results concerning transfer of
stratification along exact functors; a detailed study is taken up in
\cite[Section 7]{\bik:2009b}.

\subsection*{Change of categories}
As before $R$ is a graded commutative noetherian ring and $\sfT$ is a
compactly generated $R$-linear triangulated category. Let $F\col
\sfU\to \sfT$ be an equivalence of triangulated categories. Observe
that $\sfU$ is then compactly generated; it is also $R$-linear with
action given by the isomorphism of graded abelian groups
\[
\Hom^{*}_{\sfU}(X,Y)\cong \Hom^{*}_{\sfT}(F X,F Y)
\]
induced by $F$, for all $X,Y$ in $\sfU$.

\begin{proposition}
\label{prop:cot}
The ring $R$ stratifies $\sfU$ if and only if it stratifies $\sfT$.
\end{proposition}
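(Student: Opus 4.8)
The plan is to observe that an exact equivalence is compatible with every ingredient in the definition of stratification, so that conditions (S1) and (S2) pass back and forth between $\sfU$ and $\sfT$. First I would fix a quasi-inverse $G\col\sfT\to\sfU$ of $F$; it is again an exact functor, and both $F$ and $G$ preserve arbitrary coproducts, being equivalences. Consequently $F$ restricts to an equivalence $\sfU^{\sfc}\xra{\sim}\sfT^{\sfc}$ on compact objects, and, via the $R$-linear isomorphism $\Hom^{*}_{\sfU}(X,Y)\cong\Hom^{*}_{\sfT}(FX,FY)$, for every specialization closed subset $\mcV\subseteq\Spec R$ an object $X$ lies in $\sfU_{\mcV}$ if and only if $FX$ lies in $\sfT_{\mcV}$; thus $F$ carries $\sfU_{\mcV}$ onto $\sfT_{\mcV}$.

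The one step that needs a word of justification is that $F$ commutes, up to natural isomorphism, with the functors $\gam_{\mcV}$ and $L_{\mcV}$. The composite $FL^{\sfU}_{\mcV}G\col\sfT\to\sfT$ is readily checked to be a localization functor, and its kernel is $F(\sfU_{\mcV})=\sfT_{\mcV}=\Ker L^{\sfT}_{\mcV}$. Since a localization functor is determined up to unique isomorphism by its kernel, as recalled in Section~\ref{sec:Local cohomology and support}, it follows that $FL^{\sfU}_{\mcV}\cong L^{\sfT}_{\mcV}F$, and applying $F$ to the localization triangle \eqref{eq:locseq} then gives $F\gam^{\sfU}_{\mcV}\cong\gam^{\sfT}_{\mcV}F$ as well. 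Composing the cases $\mcV=\mcZ(\fp)$ and $\mcV=\mcV(\fp)$ yields $F\gam^{\sfU}_{\fp}\cong\gam^{\sfT}_{\fp}F$ for every $\fp\in\Spec R$. Hence $\gam^{\sfU}_{\fp}X\ne 0$ if and only if $\gam^{\sfT}_{\fp}(FX)\ne 0$, so that $\supp_{R}X=\supp_{R}(FX)$ for all $X\in\sfU$; in particular $\supp_{R}\sfU=\supp_{R}\sfT$, and $F$ restricts to an equivalence $\gam_{\fp}\sfU\xra{\sim}\gam_{\fp}\sfT$ for each $\fp$.

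Finally I would transfer the two stratification conditions. Since $F$ is exact and preserves coproducts, it carries localizing subcategories to localizing subcategories and satisfies $F\Loc_{\sfU}(\sfC)=\Loc_{\sfT}(F\sfC)$ for any class $\sfC$; being an equivalence, it therefore induces an inclusion-preserving bijection between the localizing subcategories of $\sfU$ and those of $\sfT$, which restricts, via the equivalence $\gam_{\fp}\sfU\xra{\sim}\gam_{\fp}\sfT$, to a bijection between localizing subcategories of $\gam_{\fp}\sfU$ and of $\gam_{\fp}\sfT$. Applying $F\Loc_{\sfU}(-)=\Loc_{\sfT}(F-)$ to $\{X\}$ and to $\{\gam^{\sfU}_{\fp}X\mid\fp\in\Spec R\}$, and using that every object of $\sfT$ is isomorphic to some $FX$, shows that the local-global principle holds for $\sfU$ exactly when it holds for $\sfT$; this settles (S1). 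The bijection of localizing subcategories shows that $\gam_{\fp}\sfU$ is zero, respectively minimal, precisely when $\gam_{\fp}\sfT$ is, which settles (S2). Hence $R$ stratifies $\sfU$ if and only if it stratifies $\sfT$. The only non-formal point in this argument is the identification $FL^{\sfU}_{\mcV}\cong L^{\sfT}_{\mcV}F$; all the rest is the routine observation that exact equivalences respect the constructions involved.
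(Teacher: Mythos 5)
Your argument is correct and follows essentially the same route as the paper: establish that $F$ commutes (up to natural isomorphism) with each $\gam_{\fp}$ and hence restricts to equivalences $\gam_{\fp}\sfU\xra{\ \sim\ }\gam_{\fp}\sfT$, then transfer conditions (S1) and (S2) across the resulting bijection of localizing subcategories. The only cosmetic difference is that the paper obtains the commutation by invoking the characterization of $\gam_{\fp}\sfT$ as the $\fp$-local $\fp$-torsion objects, while you derive it from the uniqueness of a localization functor with prescribed kernel; both are valid, and the rest of your argument fills in exactly the details the paper declares ``immediate from definitions.''
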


\begin{proof}
Using \cite[Corollary 5.9]{\bik:2008a}, it is easy to verify that for
each $\fp$ in $\Spec R$ and $X$ in $\sfU$, there is an isomorphism
$F(\gam_{\fp}X)\cong \gam_{\fp}(F X)$, and that the induced functor
$\gam_{\fp}\sfU\to \gam_{\fp}\sfT$ is an equivalence of triangulated
categories. Given this, it is immediate from definitions that $R$
stratifies $\sfU$ if and only if it stratifies $\sfT$.
\end{proof}

When $A\to B$ is a quasi-isomorphism of dg algebras,
$B\lotimes_{A}-\col \sfD(A)\to \sfD(B)$ is an equivalence of
categories, with quasi-inverse the restriction of scalars; see, for
example, \cite[3.6]{Avramov/Buchweitz/Iyengar/Miller:2009a}, or
\cite[6.1]{Keller:1994a}. The preceding result thus yields:

\begin{corollary}
\label{cor:changeofdgas}
Let $A$ and $B$ be quasi-isomorphic dg algebras. If $\sfD(A)$ is
stratified by an action of $R$, then $\sfD(B)$ is stratified by the
induced $R$-action.\qed
\end{corollary}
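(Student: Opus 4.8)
The plan is to deduce the statement directly from Proposition~\ref{prop:cot}. First I would unwind the definition: $A$ and $B$ quasi-isomorphic means they are joined by a finite chain of quasi-isomorphisms of dg algebras, so by induction on the length of this chain it suffices to treat a single quasi-isomorphism $\vf\colon A\to B$. Here I would note that the orientation of $\vf$ is immaterial: a quasi-isomorphism induces an equivalence of derived categories whether one passes to the derived base change $B\lotimes_{A}-$ or to the restriction of scalars along it (as recalled just before the statement), and these two functors are mutually quasi-inverse, so each link of the chain yields an equivalence regardless of which way it points.

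Next I would set $F=B\lotimes_{A}-\colon\sfD(A)\to\sfD(B)$ and check that it is $R$-linear for the relevant actions. The point is that the canonical $\hh{B}$-action on $\sfD(B)$ restricts along $\hh{\vf}\colon\hh{A}\to\hh{B}$ to the canonical $\hh{A}$-action on $\sfD(A)$, and $F$ intertwines them; hence if $R$ acts on $\sfD(A)$ through a homomorphism $R\to\hh{A}$, the \emph{induced} action on $\sfD(B)$ — namely the one through $R\to\hh{A}\xra{\hh{\vf}}\hh{B}$ followed by the canonical action — is precisely the $R$-linear structure carried by $F$ in the sense of the paragraph preceding Proposition~\ref{prop:cot}. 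With this identification in hand, applying Proposition~\ref{prop:cot} to $F$ gives at once that $R$ stratifies $\sfD(A)$ if and only if it stratifies $\sfD(B)$; iterating over the chain yields the corollary.

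The only step needing any care — and the one I would single out as the ``main obstacle'', though it is entirely routine — is the compatibility check in the previous paragraph: that transporting the $R$-action along $F$ (equivalently, along restriction of scalars) produces exactly the action the corollary calls ``induced''. This amounts to the naturality of the isomorphisms $\Hom^{*}_{\sfD(A)}(X,Y)\cong\Hom^{*}_{\sfD(B)}(FX,FY)$ with respect to the maps on cohomology rings, and presents no genuine difficulty. Everything else is formal, which is why the corollary follows immediately from Proposition~\ref{prop:cot}.
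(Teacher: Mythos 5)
Your argument is exactly the paper's: reduce to a single quasi-isomorphism in the chain, observe that $B\lotimes_{A}-$ and restriction of scalars give a (mutually quasi-inverse) $R$-linear equivalence $\sfD(A)\simeq\sfD(B)$ carrying the given action to the induced one, and apply Proposition~\ref{prop:cot}. The compatibility check you flag is indeed routine and is left implicit in the paper, which simply cites the equivalence and concludes.
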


\subsection*{Proof of Theorem~\ref{thm:cdga}}
Let $R=\hh A$. The category $\sfD(A)$ is tensor triangulated so it admits an $R$-action induced by the isomorphism $R\cong \Hom^{*}_{\sfD(A)}(A,A)$. The dg algebras $A$ and $\hh A$ are quasi-isomorphic, as $A$ is formal, so it suffices to prove that $\sfD(\hh A)$ is stratified by the induced $R$-action; see Corollary~\ref{cor:changeofdgas}. It is easy to verify that the homomorphism $R\to \Hom^{*}_{\sfD(\hh A)}(\hh A,\hh A)=\hh A$ induced by this $R$-action is bijective, and hence that $\sfD(\hh A)$ is
stratified by $R$ if and only if it is stratified by the canonical
$\hh A$-action.

In summary, replacing $A$ by $\hh A$ we may thus assume the
differential of $A$ is zero. Set $\sfD=\sfD(A)$. Since $A$ is a unit
and a generator of this tensor triangulated category, its localizing
subcategories are tensor closed. The local-global principle then holds
for $\sfD$, by Theorem~\ref{thm:ttlg}.  It remains to verify
stratification condition (S2).

Fix a $\fp$ in $\Spec A$. Since $A$ is a compact generator for $\sfD$,
a dg $A$-module $M$ is in $\gam_{\fp}\sfD$ if and only if the
$A$-module $H^*(M)=\Hom^{*}_{\sfD}(A,M)$ is $\fp$-local and
$\fp$-torsion. Hence for such an $M$ the localization map $M\to
M_{\fp}$ is an isomorphism; here $M_{\fp}$ denotes the usual
(homogenous) localization of $M$ at $\fp$. Localizing $A$ at $\fp$ we
may thus assume that it is local with maximal ideal $\fp$; set
$k=A/\fp$, which is a graded field. Setting $\mcV=\mcV(\fp)$, one has
an isomorphism of functors $\gam_{\fp}\cong \gam_{\mcV}$.

Evidently, $k$ is in $\gam_{\mcV}\sfD$, so to verify condition (S2) it suffices to verify that
\begin{equation}
\label{eq:cdga}
\Loc_{\sfD}(M) = \Loc_{\sfD}(k)
\end{equation}
holds for each $M$ in $\gam_{\mcV}\sfD$ with $H^{*}(M)\ne 0$.

It is enough to prove that \eqref{eq:cdga} holds for $M=\gam_{\mcV}A$. Indeed, applying the functor $-\lotimes_{A}M$ would then yield the second equality below:
\[
\Loc_{\sfD}(M) = \Loc_{\sfD}(\gam_{\mcV}A\lotimes_AM) =
\Loc_{\sfD}(k\lotimes_{A}M)\,,
\]
while the first one holds, by \eqref{eq:loc-tensor}, since $M\cong \gam_{\mcV}M$; in particular, $H^{*}(k\lotimes_{A}M)\ne 0$. Since $k$ is a graded field and the action of $A$ on $k\lotimes_{A}M$ factors
through $k$, this implies $\Loc_{\sfD}(k\lotimes_{A}M)=\Loc_{\sfD}(k)$. Combining with the equality above gives \eqref{eq:cdga}.

Now we verify \eqref{eq:cdga} for $M=\gam_{\mcV}A$. The dg module $k$ is isomorphic to $\gam_{\mcV}A\lotimes_{A}k$ and hence in $\Loc_{\sfD}(\gam_{\mcV}A)$. It remains to prove that $\gam_{\mcV}A$ is in $\Loc_{\sfD}(k)$. Let $\bsa = a_{1},\dots,a_{c}$ be a homogeneous set of generators for the ideal $\fp$, and let $\bsa^{2}$ denote the sequence $a_{1}^{2},\dots,a_{c}^{2}$. It suffices to prove that
\begin{equation}
\label{eq:cdga2}
\kos A{\bsa^{2}} \in \Thick_{\sfD}(k)\,,
\end{equation} 
for then one has 
\[
\Loc_{\sfD}(\kos A{\fp}) = \Loc_{\sfD}(\gam_{\mcV}A) = \Loc_{\sfD}(\gam_{\mcV(\bsa^{2})}A) 
= \Loc_{\sfD}(\kos A{\bsa^{2}}) \subseteq \Loc_{\sfD}(k)
\]
where the first and third equalities are by  Proposition~\ref{prop:kosproperties}, and the second holds because the radical of the ideal $(\bsa^{2})$ equals $\fp$, so that $\mcV(\bsa^{2})=\mcV$. 
 
Let $\tot\col\sfD(\GrMod A)\to \sfD$ be the totalization functor described above and $E$ in $\sfD(\GrMod A)$ the Koszul complex on the sequence $\bsa^{2}$; note that the elements $a_{i}$ are central in $A$, since they are of even degree. The complex $E$ is bounded, consists of finitely generated graded $A$-modules, and satisfies $(\bsa^{2})\cdot \hh E=0$. Since $k$ is a graded field, the subquotients of the filtration $\{0\}\subseteq (\bsa)\hh E\subseteq \hh E$ are thus finite direct sums of shifts of $k$. Hence there are inclusions
\[
E\in\Thick(\hh E)\subseteq \Thick(k)
\]
in $\sfD(\GrMod A)$; see, for example, \cite[Theorem~6.2(3)]{Avramov/Buchweitz/Iyengar/Miller:2009a}.  Since
$\tot$ is an exact functor, it follows that $\tot E$ is in $\Thick(\tot k)$ in $\sfD$. It remains to note that $\tot k = k$ and that $\tot E$ is isomorphic to a suspension of $\kos A{\bsa^{2}}$, by Lemma~\ref{lem:tot}.

This justifies \eqref{eq:cdga2} and hence completes the proof of the theorem. \qed

\bibliographystyle{amsplain}

\end{document}